\numberwithin{equation}{section}
\newtheorem{theorem}{Theorem}[section]
\newtheorem{corollary}[theorem]{Corollary}
\newtheorem{lemma}[theorem]{Lemma}
\newtheorem{proposition}[theorem]{Proposition}
\theoremstyle{definition}
\newtheorem{remark}[theorem]{Remark}
\theoremstyle{definition}
\theoremstyle{definition}
\newtheorem{assumption}[theorem]{Assumption}
\def\dashint{\operatorname%
{\,\,\text{\bf--}\kern-.98em\DOTSI\intop\ilimits@\!\!}}
\def\bH{\mathbb{H}}
\def\bR{\mathbb{R}}
\def\cA{\mathcal{A}}
\def\cB{\mathcal{B}}
\def\cC{\mathcal{C}}
\def\cD{\mathcal{D}}
\def\cF{\mathcal{F}}
\def\cG{\mathcal{G}}
\def\cH{\mathcal{H}}
\def\cM{\mathcal{M}}
\def\cS{\mathcal{S}}
\begin{document}
\title[Fractional parabolic equations]{Time fractional parabolic equations with measurable coefficients and embeddings for fractional parabolic Sobolev spaces}

\author[H. Dong]{Hongjie Dong}
\address[H. Dong]{Division of Applied Mathematics, Brown University, 182 George Street, Providence, RI 02912, USA}

\email{Hongjie\_Dong@brown.edu}

\thanks{H. Dong was partially supported by the Simons Foundation, grant \# 709545.}

\author[D. Kim]{Doyoon Kim}
\address[D. Kim]{Department of Mathematics, Korea University, 145 Anam-ro, Seongbuk-gu, Seoul, 02841, Republic of Korea}

\email{doyoon\_kim@korea.ac.kr}

\thanks{D. Kim was supported by the National Research Foundation of Korea (NRF) grant funded by the Korea government (MSIT) (2019R1A2C1084683).}

\subjclass[2020]{35R11, 26A33, 35R05}

\keywords{parabolic equation, time fractional derivative, measurable coefficients, Sobolev embeddings}

\begin{abstract}
We consider time fractional parabolic equations in both divergence and non-divergence form when the leading coefficients $a^{ij}$ are measurable functions of $(t,x_1)$ except for $a^{11}$ which is a measurable function of either $t$ or $x_1$. We obtain the solvability in Sobolev spaces of the equations in the whole space, on a half space, or on a partially bounded domain. The proofs use a level set argument, a scaling argument, and embeddings in fractional parabolic Sobolev spaces for which we give a direct and elementary proof.
\end{abstract}

\maketitle

\section{Introduction}

We study time fractional parabolic equations of the form
\begin{align}
							\label{eq0218_01}
-\partial_t^\alpha u + D_i(a^{ij}D_ju) &= D_i g_i + f,\\
							\label{eq0218_02}
-\partial_t^\alpha u + a^{ij}D_{ij}u &= f,
\end{align}
where $\partial_t^\alpha u$ is the Caputo fractional time derivative of order $\alpha \in (0,1)$:
$$
\partial_t^\alpha u(t,x) = \frac{1}{\Gamma(1-\alpha)} \frac{d}{dt} \int_0^t (t-s)^{-\alpha} \left[ u(s,x) - u(0,x) \right] \, ds.
$$
See Section \ref{sec2} for a precise definition of $\partial_t^\alpha u$.
We prove that, for given $f, g_i \in L_p((0,T) \times \bR^d)$, $1<p<\infty$, $d \geq 1$, there exist unique solutions in parabolic Sobolev spaces $\cH_{p,0}^{\alpha,1}$ and $\bH_{p,0}^{\alpha,2}$ (see Section \ref{sec2} for the definitions of $\cH_{p,0}^{\alpha,1}$ and $\bH_{p,0}^{\alpha,2}$) to the equations \eqref{eq0218_01} and \eqref{eq0218_02} with the zero initial condition.
The main contribution of this paper is that the coefficients $a^{ij} = a^{ij}(t,x_1)$, as functions of $(t,x_1) \in \bR \times \bR$, require no regularity assumptions except the uniform ellipticity condition.

In \cite{MR3899965} we proved the unique solvability of the non-divergence type equation \eqref{eq0218_02} in parabolic Sobolev spaces $\bH_{p,0}^{\alpha,2}$ when $a^{ij}(t,x)$ are merely measurable in $t$ (i.e., no regularity assumptions as functions of $t$) and have locally small mean oscillations in $x \in \bR^d$.
For the divergence type equation \eqref{eq0218_01}, we obtained in \cite{MR4030286} the corresponding solvability results when the coefficients $a^{ij}(t,x_1,x')$ are merely measurable in $x_1 \in \bR$ and have locally small mean oscillations in $(t,x')$, $x' \in \bR^{d-1}$.
In this paper, we deal with both divergence and non-divergence type equations when the coefficients are merely measurable both in $t$ and one spatial variable, say, $x_1 \in \bR$, except $a^{11}$, which needs to be a function of either $t$ or $x_1$, that is, $a^{11}=a^{11}(t)$ or $a^{11} = a^{11}(x_1)$.
Considering that we require no regularity assumptions on the coefficients, the class of leading coefficients in this paper is strictly larger than those in \cite{MR3899965, MR4030286} as long as the coefficients $a^{ij}$ are independent of $x' \in \bR^{d-1}$.
In fact, one can also consider coefficients $a^{ij}(t,x_1,x')$ that are measurable in $(t,x_1)$, except $a^{11}$, and have an appropriate regularity assumption as functions of $x'$, such as the small mean oscillation condition, so that the coefficients in \cite{MR3899965, MR4030286} can be covered as proper subclasses.
Such coefficients for time fractional parabolic equations will be discussed in a forthcoming paper in a much general weighted parabolic Sobolev space setting.
In this paper, to present how to deal with time fractional parabolic equations with merely measurable coefficients in the simplest setting, we consider the case that $a^{ij}$ are functions of only $(t,x_1)$.
As an application of the main results, we also prove the solvability of the equations \eqref{eq0218_01} and \eqref{eq0218_02} on a half space $(0,T) \times \bR^d_+$, where $\bR^d_+ = \{(x_1,x') \in \bR^d: x_1 > 0\}$, and on a spatially partially bounded domain $(0,T) \times (0,R) \times \bR^{d-1} = \{(t,x_1,x'): t \in (0,T), x_1 \in (0,R), x' \in \bR^{d-1}\}$.
In particular, the unique solvability results for the equations on the spatially partially bounded domain will be used to build the solvability theory for equations with variable coefficients in $(t,x) \in \bR \times \bR^d$ under appropriate regularity assumptions as functions of the remaining variables $x' \in \bR^{d-1}$, especially when the equations are studied in the frame work of weighted parabolic Sobolev spaces.

Among the advantages of having merely measurable coefficients, particularly noteworthy is that the solvability of equations with coefficients measurable in one spatial variable in the whole Euclidean space gives without much effort (by only using a simple extension argument) the corresponding result for equations on a half space, and further for those on a bounded domain if the boundary is sufficiently smooth.
See Theorem \ref{thm0112_1} and Proposition \ref{prop0118_1} in this paper.
Because of mathematical interests and various potential applications including the boundary value problems mentioned above, classes of merely measurable coefficients have been actively considered for elliptic equations and the usual parabolic equations, that is, the equations as in \eqref{eq0218_01} and \eqref{eq0218_02} with $\partial_t^\alpha u$ replaced with the usual time derivative term $u_t$.
For parabolic equations with $u_t$, in \cite{MR2304157, MR2352490} Krylov considered both divergence and non-divergence type equations with the coefficients $a^{ij}(t,x)$ having no regularity assumptions in $t$ and locally small mean oscillations in $x \in \bR^d$.
Coefficients $a^{ij}(t,x_1,x')$ merely measurable in $x_1 \in \bR$ are considered in \cite{MR2300337, MR2896169} for non-divergence type parabolic equations.
As to divergence type parabolic equations, see, for instance, \cite{MR2764911, MR2968240}.
For more information about elliptic and parabolic equations with measurable coefficients, see the review paper \cite{MR4156495} and reference therein.
Considering the results available in the literature for elliptic and parabolic equations with merely measurable coefficients, it is important to examine to what extends one can remove regularity assumptions on the coefficients for time fractional parabolic equations as in \eqref{eq0218_01} and \eqref{eq0218_02}.
Thus, what we present in this paper is an affirmative answer that the  coefficients $a^{ij}$, as functions of $(t,x_1) \in \bR^2$, for time fractional parabolic equations can be as general as those for the usual parabolic equations.

Regarding our restriction that $a^{11}$ needs to be either $a^{11}(t)$ or $a^{11}(x_1)$, Krylov showed in \cite{MR3488249} that it is not possible to obtain a unique solvability result for $p \in (1,3/2)$ or $p \in (3, \infty)$ if all of the coefficients $a^{ij}$, $i,j=1,\ldots,d$, are functions of $(t,x_1) \in \bR \times \bR$ with no regularity assumptions even when $d=1$.
As such, the class of coefficients in this paper is optimal.

Parabolic equations with time fraction derivatives have many important applications in probability and mechanics.
For instance, such equations can be used to model anomalous diffusions.
See \cite{MR3581300} and references therein.
In particular, there has been study of time fractional parabolic equations in Sobolev spaces in the papers  \cite{MR2125407, MR2538276, MR3581300, MR4097256, MR3899965, MR4030286, MR4186022}.
In \cite{MR2125407}, equations as in \eqref{eq0218_02} and Volterra type equations are considered in mixed-norm Lebesgue spaces with time independent sectorial operators.
Equations as in \eqref{eq0218_01} with more general time derivatives are studied in \cite{MR2538276} in the Hilbert space setting.
See also \cite{MR1164666,MR1143209} for earlier work.
The authors of \cite{MR3581300} studied equations as in \eqref{eq0218_02} in mixed $L_p$ spaces when $a^{ij}(t,x)$ are uniformly continuous in $x \in \bR^d$ and piecewise continuous in $t \in \bR$.
These results are extended in \cite{MR4097256} for the time fractional heat equation in {\em weighted} mixed $L_p$ spaces.
Note that in \cite{MR3581300, MR4097256} the order $\alpha$ of the Caupto fractional derivative belongs not only to $(0,1)$, but also to the hyperbolic regime $(1,2)$.
As mentioned earlier, when $\alpha \in (0,1)$, non-divergence and divergence type equations with coefficients measurable in $t$ or $x_1$ are studied in \cite{MR3899965, MR4030286}, respectively, in $L_p$ spaces and, in \cite{MR4186022} the solvability results are obtained for non-divergence type equations in mixed Sobolev spaces with Muckenhoupt weights.
Thus, this paper can be considered as a sequel to the papers \cite{MR3899965, MR4030286, MR4186022} pursuing the unique solvability of divergence and non-divergence type equations in Sobolev spaces with or without weights when the order $\alpha$ of the Caupto fractional derivative is in the parabolic regime $(0,1)$.

To prove the main results of this paper, we use the level set method utilized in \cite{MR3899965, MR4030286}.
However, since the coefficients  $a^{ij}(t,x_1)$ are merely measurable, we only obtain estimates for $D_{x'}u$, where $D_{x'}= D_{x_j}$ for $j=2,\ldots,d$ when dealing  with the divergence type equation \eqref{eq0218_01}.
To obtain estimates for the full gradient, we use a scaling argument (see Lemma \ref{lem0812_2}) so that the estimates for the full gradient $D_{x}u$ can be obtained from those for $D_{x'}u$ along with the right-hand side of the equation.
Once we resolve the divergence type equation, we derive desired estimates  for non-divergence type equations from those for divergence type equations upon the observation that the spatial derivatives of solutions to equations in non-divergence form can be regarded as solutions to equations in divergence form.
In particular, following the argument in \cite{MR2833589} we turn equations in non-divergence form into those in divergence form with non-symmetric $a^{ij}$.
We here remark that the coefficients $a^{ij}$ in this paper are not necessarily symmetric.
In the case $a^{11}=a^{11}(x_1)$, we also use a change of $x_1$ variable. Combining all these different ingredients in a proof is quite delicate.
To obtain the unique solvability results of equations on a half space and on a spatially partially bounded domain, we make use of (odd, even, and periodic) extension arguments, for which it is essential that the coefficients $a^{ij}$ have no regularity assumptions so that they can be discontinuous when they are periodically extended.

In this paper, we also present detailed proofs of embeddings for parabolic Sobolev spaces $\cH_{p,0}^{\alpha,1}$, which serve as solutions spaces to time fractional parabolic equations in divergence form.
See Theorem \ref{thm0717_1} for H\"{o}lder embeddings and Theorem \ref{thm0811_01} for Sobolev embeddings.
It is clear that such embeddings are necessary when studying PDEs in the framework of Sobolev spaces.
We use the embeddings in the proof of Proposition \ref{prop0118_1} in this paper.
Embeddings for $\bH_{p,0}^{\alpha,2}$ and $\bH_{p,0}^{\alpha,1}$ are proved in \cite{MR3899965, MR4030286} with somewhat less optimal exponents.
On the other hand, one can also find embeddings for fractional Sobolev spaces in \cite{MR3000457, MR3965393}, which are based on the so called {\em mixed derivative theorem} (see \cite{MR0482314} and \cite[Proposition 3.2]{MR2318575}) with vector-valued Triebel-Lizorkin type spaces and their interpolation spaces.
We believe that elementary and self-contained proofs of the embeddings are of interest to a wider audience, and the last section of this paper is devoted to provide such arguments.
In particular, our proof for Sobolev embedding uses mollifications of functions, which turn out to be very handy when dealing with embeddings or boundary traces  of functions.

The remainder of the paper is organized as follows.
In the next section, we introduce some notation and state the main results of the paper.
In Section \ref{sec3}, we deal with equations with coefficients $a^{ij}$ either being functions of only $t$ or only $x_1$. We also show that the estimates for the full gradient can be obtained from those of $D_{x'}u$ and the right-hand side as long as $a^{11}=a^{11}(t)$ or $a^{11}=a^{11}(x_1)$.
In Section \ref{sec4}, we use the level set method to obtain estimates of $D_{x'}u$ for equations in divergence form.
We complete the proofs of our main results in Section \ref{sec5}.
Equations on a half space and on a spatially partially bounded domain are discussed in Section \ref{sec6}.
In the last section, as noted above, we present self-contained proofs of embeddings for $\cH_{p,0}^{\alpha,1}$.

\section{Notation and Main results}
\label{sec2}

For $\Omega \subset \bR^d$ and $T > 0$, we denote $\Omega_T = (0,T) \times \Omega$.
In particular, we have $\bR^d_T = (0,T) \times \bR^d$.
We write $Du$ or $D_x u$ to denote $D_{x_j}u$, $j=1,2,\ldots,d$, whereas by $D_{x'}u$, we mean $D_{x_j}u$, $j=2,\ldots,d$.
For $\alpha \in (0,1]$, we denote the parabolic cylinders by
\begin{equation*}
Q_{r_1, r_2}(t,x) = (t-r_1^{2/\alpha}, t) \times B_{r_2}(x), \quad Q_r(t,x) = Q_{r,r}(t,x).
\end{equation*}
We often write $B_r$ and $Q_r$ for $B_r(0)$ and $Q_r(0,0)$. 
We use the notation $(u)_{\cD}$ to denote the average of $u$ over $\cD$, where $\cD$ is a subset of $\bR^{d+1}$.
For $(t_0,x_0) \in \bR \times \bR^d$ and a function $f$ defined on $(-\infty,T) \times \bR^d$ with $T \in (-\infty,\infty]$, we set
$$
\cM f(t_0,x_0) = \sup_{Q_r(t,x) \ni (t_0,x_0)} \dashint_{Q_r(t,x)} |f(s,y)|1_{(-\infty,T) \times \bR^d} \, dy \, ds
$$
and
$$
\cS\cM f(t_0,x_0) = \sup_{Q_{r_1,r_2}(t,x) \ni (t_0,x_0)}
\dashint_{Q_{r_1,r_2}(t,x)} |f(s,y)|1_{(-\infty,T) \times \bR^d} \, dy \, ds.
$$

For $\alpha \in (0,1)$ and $S \in \bR$, we denote the $\alpha$-th integral
$$
I^\alpha_S \varphi(t) = \frac{1}{\Gamma(\alpha)} \int_S^t (t-s)^{\alpha - 1} \varphi(s) \, ds
$$
for $\varphi \in L_1(S,\infty)$,
where
$$
\Gamma(\alpha) = \int_0^\infty t^{\alpha - 1} e^{-t} \, dt.
$$
In this paper we often write $I^\alpha$ instead of $I_0^\alpha$ for the $\alpha$-th integral with the origin $0$. For a sufficiently smooth function $\varphi(t)$, we set
$$
D_t^\alpha \varphi(t) = \frac{d}{dt} I^{1-\alpha}_S \varphi(t) = \frac{1}{\Gamma(1-\alpha)} \frac{d}{dt} \int_{S}^t (t-s)^{-\alpha} \varphi(s) \, ds
$$
and
\begin{align*}
\partial_t^\alpha \varphi(t) &= \frac{1}{\Gamma(1-\alpha)} \int_{S}^t (t-s)^{-\alpha} \varphi'(s) \, ds\\
&= \frac{1}{\Gamma(1-\alpha)} \frac{d}{dt} \int_{S}^t (t-s)^{-\alpha} \left[ \varphi(s) - \varphi(S) \right] \, ds.
\end{align*}
Note that if $\varphi(S) = 0$, then
$$
D_t^\alpha \varphi =  \partial_t(I_S^{1-\alpha} \varphi) = \partial_t^\alpha \varphi.
$$
Since there is no information about the origin $S$ in the notation $D_t^\alpha$ and $\partial_t^\alpha$, we sometimes write $\partial_t I_S^{1-\alpha}$ in place of $D_t^\alpha$ (or $\partial_t^\alpha$ whenever appropriate) to indicate the origin.

For $1 \le p \le \infty$, $\alpha \in (0,1)$, and $k \in \{1,2,\ldots\}$, we set
$$
\widetilde{\bH}_p^{\alpha,k}\left((S,T) \times \Omega\right) = \left\{ u \in L_p: \partial_t I_S^{1-\alpha} u, \, D^\beta_x u \in L_p, \, 0 \leq |\beta| \leq k
\right\}
$$
with the norm
\begin{equation*}
\|u\|_{\widetilde{\bH}_p^{\alpha,k}\left((S,T) \times \Omega\right)} = \|\partial_t I_S^{1-\alpha}u\|_{L_p\left((S,T) \times \Omega\right)} + \sum_{0 \leq |\beta| \leq k}\|D_x^\beta u\|_{L_p\left((S,T) \times \Omega\right)},
\end{equation*}
where by $\partial_t I_S^{1-\alpha} u$ we mean that there exists $g \in L_p\left((S,T) \times \Omega\right)$ such that
\begin{equation}
							\label{eq0122_01}
\int_S^T\int_\Omega g(t,x) \varphi(t,x) \, dx \, dt = - \int_S^T\int_\Omega I_S^{1-\alpha}u(t,x) \partial_t \varphi(t,x) \, dx \, dt
\end{equation}
for all $\varphi \in C_0^\infty\left((S,T) \times \Omega\right)$.
Recall that $\partial_t I_S^{1-\alpha} u$ can be denoted by $D_t^\alpha u$.
Next, $\bH_p^{\alpha,k}\left((S,T) \times \Omega\right)$ is defined by
\begin{align*}
&\bH_p^{\alpha,k}\left((S,T) \times \Omega\right)\\
&= \left\{ u \in \widetilde{\bH}_p^{\alpha,k}\left((S,T) \times \Omega\right): \text{\eqref{eq0122_01} is satisfied for all}\,\, \varphi \in C_0^\infty\left([S,T) \times \Omega\right)\right\}
\end{align*}
with the same norm as for $\widetilde{\bH}_p^{\alpha,k}\left((S,T) \times \Omega\right)$.
Note that test functions for the space $\bH_p^{\alpha,k}\left((S,T) \times \Omega\right)$ are not necessarily zero at $t = S$. We then define
$\bH_{p,0}^{\alpha,k}((S,T)\times\Omega)$
to be the set of functions in $\bH_p^{\alpha,k}((S,T)\times\Omega)$
each of which is
approximated by a sequence $\{u_n(t,x)\} \subset C^\infty\left([S,T]\times \Omega\right)$ such that $u_n$ vanishes for large $|x|$ and $u_n(S,x) = 0$.

By $w \in \bH_p^{-1}\left((S,T) \times \Omega \right)$ we mean that there exist $f, g_i \in L_p\left((S,T) \times \Omega\right)$, $i =1, \ldots, d$, such that
$$
w = D_i g_i + f
$$
in $(S,T) \times \Omega$ in the distribution sense
and
\begin{align*}
&\|w\|_{\bH_p^{-1}\left((S,T) \times \Omega \right)}\\
&= \inf \left\{ \sum_{i=1}^d\|g_i\|_{L_p\left((S,T) \times \Omega\right)} + \|f\|_{L_p\left((S,T) \times \Omega\right)}: w = D_i g_i + f\right\} < \infty.
\end{align*}
We also write
$$
w = \operatorname{div}g + f,
$$
where $g = (g_1,\ldots,g_d)$.

For $u \in L_p\left((S,T) \times \Omega \right)$, we say $D_t^\alpha u \in \bH_p^{-1}\left((S,T) \times \Omega \right)$
if there exist $f, g_i \in L_p\left((S,T) \times \Omega \right)$, $i=1,\ldots, d$, such that, for any $\varphi \in C_0^\infty\left((S,T) \times \Omega\right)$,
\begin{equation}
							\label{eq0720_01}
\int_S^T \int_\Omega I^{1-\alpha}_S u \, \partial_t \varphi \, dx \, dt = \int_S^T \int_\Omega g_i D_i \varphi \, dx \, dt - \int_S^T \int_\Omega f \varphi \, dx \, dt.
\end{equation}

Let $\tilde{\cH}_p^{\alpha, 1}\left((S,T) \times \Omega \right)$ be the collection of functions $u \in L_p\left((S,T) \times \Omega \right)$ such that $D_x u \in L_p\left((S,T) \times \Omega \right)$ and $D_t^\alpha u \in \bH_p^{-1}\left((S,T) \times \Omega \right)$.
For $u \in \tilde{\cH}_p^{\alpha, 1}\left((S,T) \times \Omega \right)$, if \eqref{eq0720_01} holds for any $\varphi \in C_0^\infty\left([S,T) \times \Omega\right)$, we say that $u \in \cH_p^{\alpha,1}\left((S,T) \times \Omega\right)$ with the norm
$$
\|u\|_{\cH_p^{\alpha,1}\left((S,T)\times\Omega\right)} = \|u\|_{L_p\left((S,T) \times \Omega\right)} + \|D_x u\|_{L_p\left((S,T) \times \Omega\right)} + \|D_t^\alpha u\|_{\bH_p^{-1}\left((S,T) \times \Omega\right)}.
$$
We then define $\cH_{p,0}^{\alpha,1}\left((S,T) \times \Omega\right)$ to be the collection of $u \in \cH_p^{\alpha,1}\left((S,T) \times \Omega\right)$ satisfying the following. There exists a sequence of $\{u_n\} \subset C^\infty\left([S,T] \times \Omega\right)$ such that $u_n$ vanishes for large $|x|$,  $u_n(S,x) = 0$, and
\begin{equation*}
\|u_n - u\|_{\cH_p^{\alpha,1}\left((S,T) \times \Omega\right)} \to 0
\end{equation*}
as $n \to \infty$.

Throughout the paper, we assume that the leading coefficients $a^{ij}$ satisfy the uniform ellipticity condition: there exists $\delta \in (0,1)$ such that
$$
a^{ij}\xi_i \xi_j \geq \delta |\xi|^2, \quad |a^{ij}| \leq \delta^{-1}
$$
for any $\xi \in \bR^d$ and $(t,x) \in \bR \times \bR^d$.
The following assumption on $a^{ij}$ means that $a^{ij}$ are independent of $x' \in \bR^{d-1}$ with no regularity assumptions except that $a^{11}$ is either a function of only $t$ or $x_1$.

\begin{assumption}
							\label{assum0808_2}
The coefficients $a^{ij}$ satisfy either (i) or (ii) of the following.
\begin{enumerate}
\item[(i)]
$a^{11} = a^{11}(t)$, $a^{ij} = a^{ij}(t,x_1)$ for $(i,j) \neq (1,1)$.
\item[(ii)]
$a^{11} = a^{11}(x_1)$, $a^{ij} = a^{ij}(t,x_1)$ for $(i,j) \neq (1,1)$.
\end{enumerate}
\end{assumption}

The following is our main result for equations in divergence form.

\begin{theorem}[Divergence case with measurable coefficients]
							\label{thm0808_01}
Let $\lambda \geq 0$, $\alpha \in (0,1)$, $p \in (1,\infty)$, $T \in (0,\infty)$, and $a^{ij}$ satisfy Assumption \ref{assum0808_2}.
Then, for $u \in \cH_{p,0}^{\alpha,1}(\bR^d_T)$ satisfying
\begin{equation}
							\label{eq0814_06}
-\partial_t^\alpha u + D_i\left(a^{ij} D_j u \right)-\lambda u = D_i g_i + f
\end{equation}
in $\bR^d_T$, where $g_i, f \in L_p(\bR^d_T)$, we have, for $\lambda > 0$,
\begin{equation}
							\label{eq0114_02}
\|Du\|_{L_p(\bR^d_T)} + \sqrt{\lambda}\|u\|_{L_p(\bR^d_T)} \leq N_0 \|g\|_{L_p(\bR^d_T)} + \frac{N_0}{\sqrt{\lambda}}\|f\|_{L_p(\bR^d_T)},
\end{equation}
and, for $\lambda \geq 0$,
\begin{equation}
							\label{eq0811_02}
\begin{aligned}
&\|Du\|_{L_p(\bR^d_T)} \leq N_0 \|g\|_{L_p(\bR^d_T)} + N_0T^{\alpha/2}\|f\|_{L_p(\bR^d_T)},
\\
&\|u\|_{L_p(\bR^d_T)} \leq N_0 T^{\alpha/2}\|g\|_{L_p(\bR^d_T)} + N_0 T^{\alpha}\|f\|_{L_p(\bR^d_T)},
\end{aligned}
\end{equation}
where $N_0 = N_0(d,\delta,\alpha, p)$.
Moreover, for any $g_i, f \in L_p(\bR^d_T)$, there exists a unique $u \in \cH_{p,0}^{\alpha,1}(\bR^d_T)$ satisfying \eqref{eq0814_06}.
\end{theorem}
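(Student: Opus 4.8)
\emph{Plan.} I would separate the assertion into the a priori estimates \eqref{eq0114_02}--\eqref{eq0811_02} and the unique solvability. Granting the estimates, uniqueness is immediate (apply \eqref{eq0811_02} with $g=f=0$), and existence follows by the method of continuity: I connect $a^{ij}$ to a constant-coefficient operator along the segment $(1-\theta)a^{ij}+\theta\bar a^{ij}$, $\theta\in[0,1]$, which preserves both the ellipticity and Assumption \ref{assum0808_2}, use \eqref{eq0114_02} (say with $\lambda=1$) as the uniform a priori bound, and invoke the solvability for coefficients depending on only one of $t$, $x_1$ available from Section \ref{sec3}. So the heart of the matter is the estimate for a given $u\in\cH_{p,0}^{\alpha,1}(\bR^d_T)$. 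I would begin with a reduction: by the scaling argument of Lemma \ref{lem0812_2} --- preceded, in case (ii) of Assumption \ref{assum0808_2}, by the change of variable $x_1\mapsto\int_0^{x_1}(a^{11}(s))^{-1}\,ds$, which normalizes $a^{11}$ while keeping every coefficient independent of $x'$ --- it suffices to bound $\|D_{x'}u\|_{L_p}$ by the right-hand side plus an arbitrarily small multiple of $\|Du\|_{L_p}$; the $D_1u$ contribution is then recovered from the equation itself and absorbed.

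\emph{The estimate for $D_{x'}u$.} This is the level-set argument of Section \ref{sec4}, which I would first carry out for $p\in[2,\infty)$. Fixing a parabolic cylinder $Q_r(t_0,x_0)$, let $\bar u$ solve the homogeneous equation $-\partial_t^\alpha\bar u+D_i(a^{ij}D_j\bar u)-\lambda\bar u=0$ on a comparable cylinder with $\bar u=u$ on the lateral boundary; such $\bar u$ exists by the $L_2$ energy method, which uses nothing of $a^{ij}$ beyond boundedness and ellipticity. Then $u-\bar u$ solves the equation with right-hand side $\operatorname{div}g+f+\lambda u$ and vanishing lateral and initial data, so an energy estimate controls $\|D(u-\bar u)\|_{L_2}$ on $Q_r$ by the $L_2$ averages of $|g|$, $|f|$, $\sqrt\lambda\,|u|$ over an enlarged cylinder. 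On the other hand, since $a^{ij}=a^{ij}(t,x_1)$ is independent of $x'$, each derivative $D_{x'}\bar u$ again solves the homogeneous equation, so the basic energy and Caccioppoli inequalities may be iterated in the $x'$- and $t$-variables to produce higher integrability and an oscillation-decay bound for $D_{x'}\bar u$ on smaller cylinders. Combining the two, I obtain a pointwise bound for the sharp function, schematically
\[
(D_{x'}u)^{\#}(t,x)\le\varepsilon\big(\cM(|Du|^{2})(t,x)\big)^{1/2}+N_\varepsilon\big(\cM(|g|^{2}+|f|^{2}+\lambda|u|^{2})(t,x)\big)^{1/2},
\]
and then the Fefferman--Stein sharp function theorem and the Hardy--Littlewood maximal function theorem on $\bR^{d+1}$ (using $p/2>1$) give
\[
\|D_{x'}u\|_{L_p(\bR^d_T)}\le\varepsilon\|Du\|_{L_p(\bR^d_T)}+N_\varepsilon\big(\|g\|_{L_p(\bR^d_T)}+\|f\|_{L_p(\bR^d_T)}+\sqrt\lambda\,\|u\|_{L_p(\bR^d_T)}\big)
\]
for every $\varepsilon>0$. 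The range $1<p<2$ I would then recover by duality, passing to the adjoint equation, which is again of divergence form with transposed coefficients (hence still covered by Assumption \ref{assum0808_2}) and with the backward-in-time fractional derivative; the function-space setup of Section \ref{sec2}, in which test functions need not vanish at the initial time, is tailored to exactly this.

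\emph{Closing the estimate and existence.} Feeding the last display back through Lemma \ref{lem0812_2} yields the same bound with $\|Du\|_{L_p}$ on the left-hand side; choosing $\varepsilon$ small then absorbs $\varepsilon\|Du\|_{L_p}$ and gives, for $\lambda$ above a threshold $\lambda_0(d,\delta,\alpha,p)$, the inequality $\|Du\|_{L_p}+\sqrt\lambda\,\|u\|_{L_p}\le N(\|g\|_{L_p}+\lambda^{-1/2}\|f\|_{L_p})$. I remove the restriction $\lambda\ge\lambda_0$ by the parabolic dilation $(t,x)\mapsto(\kappa^{2/\alpha}t,\kappa x)$, under which the class in Assumption \ref{assum0808_2} is invariant and $\lambda\mapsto\kappa^{-2}\lambda$, obtaining \eqref{eq0114_02} for all $\lambda>0$. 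Finally \eqref{eq0811_02}, with its explicit powers of $T$, I would deduce from \eqref{eq0114_02} together with the bound $\|I^\alpha h\|_{L_p((0,T)\times\bR^d)}\le(T^\alpha/\Gamma(\alpha+1))\|h\|_{L_p((0,T)\times\bR^d)}$ for the fractional integral on a finite interval --- which, since $u(0,\cdot)=0$, turns control of an $\bH^{-1}$-type quantity into control of $\|u\|_{L_p}$ with a gain of $T^\alpha$ --- plus an interpolation between the $u$- and $Du$-bounds for the $T^{\alpha/2}$ factor; this bookkeeping is routine and I would not belabor it. Existence and uniqueness then follow as indicated above.

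\emph{Main obstacle.} The crux is the interior regularity for the homogeneous equation: establishing the higher integrability and oscillation decay for $D_{x'}\bar u$ when $a^{ij}=a^{ij}(t,x_1)$ is merely measurable, and matching this up with the scaling reduction that produces the $D_1u$ bound. Case (ii) of Assumption \ref{assum0808_2} is the delicate one, since the change of variables normalizing $a^{11}(x_1)$ and the anisotropic scaling behind Lemma \ref{lem0812_2} must be coordinated; one must also carry the non-divergence datum $f$ through the argument so that precisely the exponents $T^{\alpha/2}$ and $T^{\alpha}$, and nothing worse, appear.
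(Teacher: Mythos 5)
Your overall architecture parallels the paper's: reduce to the estimate for $D_{x'}u$ via Lemma \ref{lem0812_2}, obtain that estimate by a Caffarelli--Peral style decomposition $u=\bar u+(u-\bar u)$ on cylinders, handle $\lambda>0$ via Agmon's idea or scaling, and treat $p\in(1,2)$ by duality. The solvability and uniqueness arguments you outline are essentially the paper's as well. So the high-level plan is sound.

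The genuine gap is the centerpiece of your estimate. You posit that iterating energy/Caccioppoli inequalities for the homogeneous time-fractional equation yields ``higher integrability and an oscillation-decay bound for $D_{x'}\bar u$ on smaller cylinders,'' from which you write a pointwise sharp-function inequality and invoke Fefferman--Stein to cover all $p>2$ in one stroke. For coefficients that are merely measurable in $(t,x_1)$, no such oscillation decay (nor any Campanato-type pointwise bound) for $D_{x'}\bar u$ is available — this is precisely what measurability forbids, and it is the principal difficulty the paper is designed to circumvent. What \emph{is} available is a finite gain of integrability: Proposition \ref{prop0811_1} shows that $D_{x'}v$ lands in $L_{p_1}$ with $1/p_1\ge 1/p_0-1/(d+2/\alpha)$, using only the elementary fractional parabolic Sobolev embeddings of Section~7. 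Starting from $p_0=2$, this gives a bounded gap, so the level-set argument (Lemma \ref{lem0812_1}) and a bootstrap in $p$ — repeating the step until $p_1=\infty$ — are essential to reach all $p\in[2,\infty)$. A single application of Fefferman--Stein, as you propose, would require precisely the pointwise/$L_\infty$ control of $D_{x'}\bar u$ on cylinders that measurable coefficients preclude at the base step $p_0=2$. This is where your argument breaks down, and it is not a routine fix: replacing the sharp-function step by the measure-theoretic level-set iteration and the explicit bootstrap (Theorem \ref{thm0811_01}, Corollary \ref{cor0811_1}, Proposition \ref{prop0811_1}, Lemma \ref{lem0812_1}) is the actual content of Sections~4, 5, and~7.

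Two smaller remarks. First, the change of variable $x_1\mapsto\int_0^{x_1}(a^{11}(s))^{-1}\,ds$ that you insert before Lemma \ref{lem0812_2} in case (ii) of Assumption \ref{assum0808_2} is unnecessary for the divergence form theorem: the proof of Lemma \ref{lem0812_2} already accommodates both $a^{11}=a^{11}(t)$ and $a^{11}=a^{11}(x_1)$ via the scaling $w(t,x_1,x')=u(\mu^{-2/\alpha}t,\mu^{-1}x_1,x')$, since after scaling the leading operator $D_1(\tilde a^{11}D_1\cdot)+\Delta_{x'}$ satisfies Assumption \ref{assum0807_01} as is. The change of variable is used only in the non-divergence case (Case~2 of the proof of Theorem \ref{thm0808_02}). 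Second, your derivation of \eqref{eq0811_02} via the fractional integral bound $\|I^\alpha h\|_{L_p}\le (T^\alpha/\Gamma(\alpha+1))\|h\|_{L_p}$ is compatible with what the paper does (it appeals to Lemma~4.1 of \cite{MR4030286}, which rests on exactly this inequality), followed by the choice $\lambda=T^{-\alpha}$; that part is fine once the $\lambda>0$ estimate is in hand.
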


For a domain $\Omega$ in $\bR^d$ and $T \in (S, \infty)$, we say that $u \in \cH_{p,0}^{\alpha,1}\left((S,T) \times \Omega \right)$ satisfies the divergence form equation
$$
- \partial_t^\alpha u + D_i \left( a^{ij} D_j u\right) -\lambda  u = D_i g_i + f
$$
in $(S,T) \times \Omega$,
where $g_i, f \in L_p\left((S,T) \times \Omega \right)$ if
\begin{multline*}
\int_S^T \int_{\Omega} I_S^{1-\alpha}u \, \varphi_t \, dx \, dt + \int_S^T \int_{\Omega} \left(- a^{ij}D_j u D_i \varphi -\lambda u \varphi \right) \, dx \, dt
\\
= \int_S^T \int_{\Omega} \left(f \varphi - g_i D_i \varphi \right) \, dx \, dt
\end{multline*}
for all $\varphi \in C_0^\infty\left([S,T) \times \Omega\right)$.

\begin{remark}
							\label{rem0811_1}
Theorem \ref{thm0808_01} holds  when $p = 2$. See Proposition 4.2 in \cite{MR4030286} with the comment above it saying that no regularity assumptions on $a^{ij}$ are needed when $p = 2$.
See also Theorem 2.1 in \cite{MR4030286}, in particular, the $\lambda = 0$ case, where
the estimate \eqref{eq0811_02} is stated so that $N_0$ and $N_0T^{\alpha}$ are replaced with a constant $N$ which also depends on $T$.
Nevertheless, one can turn the constant $N$ into those as in the estimate \eqref{eq0811_02} by using a scaling $T^{-2}u(T^{2/\alpha}t, Tx)$ since the coefficients $a^{ij}$ only need to satisfy the ellipticity condition.
\end{remark}

\begin{remark}
							\label{rem0115_1}
Under the assumptions of Theorem \ref{thm0808_01}, for $u \in \cH_{p,0}^{\alpha,1}(\bR^d_T)$ and $\lambda \geq 0$ satisfying \eqref{eq0814_06},
we also have
\begin{equation*}
\|u\|_{\cH_p^{\alpha,1}(\bR^d_T)} \leq N \|g\|_{L_p(\bR^d_T)} + N \|f\|_{L_p(\bR^d_T)}
\end{equation*}
provided that $N$ depends not only on $d$, $\delta$, $\alpha$, $p$, $T$, but also on $\lambda$.
The constant $N$ can be chosen as an  increasing function with respect to $\lambda$.
To see this, we estimate $\|u_t\|_{\bH_p^{-1}(\bR^d_T)}$ by using the equation with the estimates in Theorem \ref{thm0808_01}.
\end{remark}

The theorem below is our main result for equations in non-divergence form.

\begin{theorem}[Non-divergence case with measurable coefficients]
							\label{thm0808_02}
Let $\lambda \geq 0$, $\alpha \in (0,1)$, $p \in (1,\infty)$, $T \in (0,\infty)$, and $a^{ij}$ satisfy Assumption \ref{assum0808_2}.
Then, for $u \in \bH_{p,0}^{\alpha,2}(\bR^d_T)$ satisfying
\begin{equation}
							\label{eq0814_04}
-\partial_t^\alpha u + a^{ij} D_{ij} u - \lambda u = f
\end{equation}
in $\bR^d_T$, where $f \in L_p(\bR^d_T)$, we have
\begin{equation}
							\label{eq0814_01}
\|\partial_t^\alpha u\|_{L_p(\bR^d_T)} + \|D^2u\|_{L_p(\bR^d_T)} +\sqrt{\lambda}\|Du\|_{L_p(\bR^d_T)} + \lambda\|u\|_{L_p(\bR^d_T)} \leq N_0\|f\|_{L_p(\bR^d_T)},
\end{equation}
where $N_0 = N_0(d,\delta,\alpha, p)$.
We also have
\begin{equation}
							\label{eq0115_01}
\|u\|_{\bH_p^{\alpha,2}(\bR^d_T)} \leq N\|f\|_{L_p(\bR^d_T)},
\end{equation}
where $N=N(d,\delta,\alpha,p,T)$.
Moreover, for any $f \in L_p(\bR^d_T)$, there exists a unique $u \in \bH_{p,0}^{\alpha,2}(\bR^d_T)$ satisfying \eqref{eq0814_04}.
\end{theorem}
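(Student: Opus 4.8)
The plan is to deduce Theorem \ref{thm0808_02} from the divergence-form result Theorem \ref{thm0808_01}, following the scheme of \cite{MR2833589}: suitable spatial derivatives of a solution of the non-divergence equation solve divergence-form equations, and the few quantities not captured this way are recovered from the equation itself together with the one-variable model estimates. It suffices to establish the a priori estimate \eqref{eq0814_01} for every $u\in\bH_{p,0}^{\alpha,2}(\bR^d_T)$ solving \eqref{eq0814_04}. Granting this, uniqueness follows by applying \eqref{eq0814_01} to the difference of two solutions (with $f=0$); existence follows from the method of continuity along $\cL_\sigma=-\partial_t^\alpha+\big((1-\sigma)\delta^{ij}+\sigma a^{ij}\big)D_{ij}-\lambda$, $\sigma\in[0,1]$, since the coefficients of $\cL_\sigma$ again satisfy Assumption \ref{assum0808_2} (the $(1,1)$-entry $(1-\sigma)+\sigma a^{11}$ is still a function of only $t$, respectively only $x_1$) and are uniformly elliptic with constant comparable to $\delta$, so that \eqref{eq0814_01} holds for $\cL_\sigma$ with $N_0$ independent of $\sigma$, while the endpoint $\sigma=0$ is solvable in $\bH_{p,0}^{\alpha,2}(\bR^d_T)$ by \cite{MR3899965}; and \eqref{eq0115_01} follows from \eqref{eq0814_01} by writing $u=I^\alpha(\partial_t^\alpha u)$ (valid since $u(0,\cdot)=0$) to estimate $\|u\|_{L_p}$ and interpolating to estimate $\|Du\|_{L_p}$. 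Finally, by density it is enough to prove \eqref{eq0814_01} when $u$ is smooth, compactly supported in $x$, and $u(0,\cdot)=0$, with $f$ the corresponding right-hand side, so that all differentiations below are classical.

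\textbf{The tangential derivatives.} Fix $k\in\{2,\dots,d\}$ and put $v=D_ku$. Since no $a^{ij}$ depends on $x_k$, \eqref{eq0814_04} differentiates to $-\partial_t^\alpha v+a^{ij}D_{ij}v-\lambda v=D_kf$. The point is that $a^{ij}D_{ij}v$ can be written as $D_i(\bar a^{ij}D_jv)$ with a new, in general non-symmetric, matrix $\bar a$ still satisfying Assumption \ref{assum0808_2} and uniform ellipticity: in Case (i) one takes $\bar a^{1j}=a^{11}\delta_{1j}$ and $\bar a^{ij}=a^{ij}+a^{1i}\delta_{1j}$ for $i\ge2$, which makes $D_i\bar a^{ij}=0$ for all $i$ and keeps $\bar a^{ij}\xi_i\xi_j=a^{ij}\xi_i\xi_j$; in Case (ii) one first applies the bi-Lipschitz change of variables $y_1=\Psi(x_1)$, $\Psi'=1/a^{11}$, under which the operator $a^{ij}D_{ij}$ becomes a genuine divergence-form operator whose $(1,1)$-coefficient equals $1/a^{11}(\Psi^{-1}(y_1))$, a function of $y_1$ only (the other entries being functions of $(t,y_1)$, ellipticity preserved up to a factor depending on $\delta$, and all relevant $L_p$-norms changing only by $\delta$-dependent constants). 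In either case $v$ (respectively its transform) belongs to $\cH_{p,0}^{\alpha,1}$ and solves a divergence-form equation whose right-hand side is $D_k$ of an $L_p$ function, so Theorem \ref{thm0808_01} (using \eqref{eq0811_02} when $\lambda=0$, \eqref{eq0114_02} when $\lambda>0$) gives, after transforming back,
$$\|D D_ku\|_{L_p(\bR^d_T)}+\sqrt\lambda\,\|D_ku\|_{L_p(\bR^d_T)}\le N_0\|f\|_{L_p(\bR^d_T)},\qquad k=2,\dots,d,$$
which controls $\|D_{ij}u\|_{L_p}$ for all $(i,j)\ne(1,1)$. Applying Theorem \ref{thm0808_01} in the same way to $u$ itself — which solves the corresponding divergence-form equation with right-hand side the pure lower-order term $f$ — and using \eqref{eq0114_02} yields $\sqrt\lambda\,\|Du\|_{L_p}+\lambda\|u\|_{L_p}\le N_0\|f\|_{L_p}$ for $\lambda>0$.

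\textbf{Closing the estimate.} Rearranging \eqref{eq0814_04} gives
$$-\partial_t^\alpha u+a^{11}D_{11}u=F:=f-\sum_{(i,j)\ne(1,1)}a^{ij}D_{ij}u+\lambda u,\qquad \|F\|_{L_p(\bR^d_T)}\le N\|f\|_{L_p(\bR^d_T)}$$
by the bounds just obtained. For a.e.\ fixed $x'$ this is a one-spatial-variable time-fractional equation in $(t,x_1)$ whose coefficient $a^{11}$ is a function of only $t$ or only $x_1$, so the corresponding maximal regularity estimate $\|\partial_t^\alpha u\|_{L_p}+\|D_{11}u\|_{L_p}\le N\|F\|_{L_p}$ — contained in \cite{MR3899965} when $a^{11}=a^{11}(t)$, and the one-dimensional model estimate of Section \ref{sec3} when $a^{11}=a^{11}(x_1)$ — completes \eqref{eq0814_01} after integrating in $x'$.

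\textbf{Main difficulty.} The delicate point is Case (ii): the change of variables $y_1=\Psi(x_1)$ puts $a^{11}(x_1)D_{11}u$ into divergence form but leaves the new leading coefficient merely bounded and measurable, so it cannot by itself deliver the $L_p$-bounds on $\partial_t^\alpha u$ and $D_{11}u$ (Theorem \ref{thm0808_01} controls only first spatial derivatives and the $\bH_p^{-1}$-norm of the time derivative). One must therefore peel off the tangential derivatives through the divergence theory, transform back, and feed the one-variable non-divergence estimate into the equation, all the while keeping track of the non-symmetric coefficients, of the Jacobian of the change of variables, and of the (routine, by mollification in $y_1$) fact that the differentiated and transformed functions belong to $\cH_{p,0}^{\alpha,1}$.
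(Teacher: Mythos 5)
Your reduction of the non‑divergence problem to the divergence theory for the tangential derivatives $D_k u$, $k\ge 2$, agrees with the paper: in Case (i) your $\bar a$ is exactly the paper's non‑symmetric $\tilde a$, and in Case (ii) the change of variables $y_1=\chi(x_1)$, $\chi'=1/a^{11}$, and the resulting $\hat a$ match what the paper does in its Case~2. The treatment of $\lambda>0$, the derivation of \eqref{eq0115_01} from \eqref{eq0814_01}, and the existence argument by the method of continuity are all sound.

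The gap is in your closing step. After peeling off $D_{ij}u$ with $(i,j)\ne(1,1)$, you rearrange to $-\partial_t^\alpha u+a^{11}D_{11}u=F$ and propose to apply, fiberwise in $x'$, a one‑spatial‑variable maximal regularity estimate $\|\partial_t^\alpha u\|_{L_p}+\|D_{11}u\|_{L_p}\le N\|F\|_{L_p}$, citing \cite{MR3899965} for $a^{11}=a^{11}(t)$ (which is fine) and ``the one‑dimensional model estimate of Section~\ref{sec3}'' for $a^{11}=a^{11}(x_1)$. No such estimate exists in Section~\ref{sec3}: Proposition~\ref{prop0808_01} and Lemma~\ref{lem0812_2} are divergence‑form statements and control only first spatial derivatives and the $\bH_p^{-1}$‑norm of $\partial_t^\alpha u$; nothing there gives $\|D_{11}u\|_{L_p}$ or $\|\partial_t^\alpha u\|_{L_p}$ for a non‑divergence equation with $a^{11}=a^{11}(x_1)$, and the change of variables $y_1=\chi(x_1)$ does not either (it produces a divergence‑form operator whose merely measurable coefficient rules out second derivatives in $y_1$). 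So as written the $a^{11}(x_1)$ branch of the closing step is unsupported, which is precisely the ``main difficulty'' you flag but do not actually resolve.

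The paper closes the estimate differently, and uniformly in the two cases, by applying Theorem~\ref{thm0808_01} in $\bR^d_T$ (not fiberwise) to $w:=D_1u\in\cH_{p,0}^{\alpha,1}(\bR^d_T)$ (via Lemma~\ref{lem0711_1}), noting that after adding $\Delta_{x'}w$ to both sides, $w$ solves a divergence equation whose coefficient matrix is $\operatorname{diag}(a^{11},1,\ldots,1)$ — a matrix that satisfies Assumption~\ref{assum0808_2} whether $a^{11}$ depends on $t$ or on $x_1$ — with right‑hand side $D_1\big(f-\sum_{(i,j)\ne(1,1)}a^{ij}D_{ij}u\big)+\sum_{i\ge 2}D_i(D_iD_1u)$. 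This yields \eqref{eq0814_02}, $\|DD_1u\|_{L_p}\le N\|f\|_{L_p}+N\|DD_{x'}u\|_{L_p}$, and combined with \eqref{eq0814_03} finishes the $D^2u$ bound; $\|\partial_t^\alpha u\|_{L_p}$ then comes from the equation. Your proposal can be repaired either by adopting this $D_1u$ argument in $\bR^d_T$, or by supplying the missing one‑dimensional non‑divergence estimate for $a^{11}(x_1)$ (which can be proved by running the same $D_1u$ trick with $d=1$, but that would need to be written out; it is not available as a citation).
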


\section{Auxiliary results for equations with measurable coefficients}
\label{sec3}

In this section, as an intermediate step toward the proof of Theorem \ref{thm0808_01}, we show that the $L_p$-norm of $Du$ is controlled by the $L_p$-norms of $D_{x'}u = (D_2u, \ldots, D_du)$ and the right-hand side of the equation if $u \in \cH_{p,0}^{\alpha,1}(\bR^d_T)$ satisfies an equation as in \eqref{eq0814_06} and $a^{11}$ is a measurable function of either $t$ or $x_1$.
For this, in this section we first consider coefficients $a^{ij}$ satisfying Assumption \ref{assum0807_01} below, where $a^{ij}$ are measurable functions of either $t$ or $x_1$.
Note that Assumption \ref{assum0808_2} is strictly more general than Assumption \ref{assum0807_01} in that the coefficients $a^{ij}$, $(i,j) \neq (1,1)$, in the former assumption are allowed to be merely measurable in $(t,x_1)$.

\begin{assumption}
							\label{assum0807_01}
The coefficients $a^{ij}$ satisfy either (i) or (ii) of the following.
\begin{enumerate}
\item[(i)]
$a^{ij} = a^{ij}(t)$ for all $i,j=1,\ldots,d$.
\item[(ii)]
$a^{ij} = a^{ij}(x_1)$ for all $i,j=1,\ldots,d$.
\end{enumerate}
\end{assumption}

We first observe the following lemma.

\begin{lemma}
							\label{lem0711_1}
Let $\alpha \in (0,1)$, $p \in (1,\infty)$, $T \in (0,\infty)$, and $\Omega \subset \bR^d$.
For $k=1,\ldots,d$, $D_{x_k} u \in \cH_{p,0}^{\alpha,1}(\Omega_T)$ whenever $u \in \bH_{p,0}^{\alpha,2}(\Omega_T)$.
\end{lemma}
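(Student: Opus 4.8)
The plan is to differentiate, in $x_k$, the smooth approximating sequence of $u$ and to check that each ingredient in the definition of $\cH_{p,0}^{\alpha,1}(\Omega_T)$ is inherited. Throughout, let $S$ be the left endpoint of the time interval (so $S=0$ here), put $v:=D_{x_k}u$, and understand all $L_p$, $\bH_p^{-1}$, $\bH_p^{\alpha,2}$, $\cH_p^{\alpha,1}$ norms below as taken over $\Omega_T$. Since $u\in\bH_{p,0}^{\alpha,2}(\Omega_T)$, one immediately has $v\in L_p$ and $D_xv\in L_p$ (the latter being a collection of second-order spatial derivatives of $u$, all of which appear in the $\bH_p^{\alpha,2}$ norm), so the only genuine points are that $D_t^\alpha v\in\bH_p^{-1}$ with the weak identity \eqref{eq0720_01}, and that $v$ is approximated by admissible smooth functions.

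The two structural facts behind this are: (a) $D_t^\alpha=\partial_tI_S^{1-\alpha}$ commutes with $D_{x_k}$; and (b) for $u\in\bH_{p,0}^{\alpha,2}(\Omega_T)$ one has $D_t^\alpha u\in L_p$, while a spatial derivative of an $L_p$ function lies in $\bH_p^{-1}$ with no larger norm (take that function as the corresponding $g_i$, the rest zero). I would realize both via the approximating sequence. Let $\{u_n\}\subset C^\infty([S,T]\times\Omega)$ be the sequence furnished by $u\in\bH_{p,0}^{\alpha,2}(\Omega_T)$: $u_n$ vanishes for large $|x|$, $u_n(S,\cdot)\equiv0$, and $\|u_n-u\|_{\bH_p^{\alpha,2}}\to0$. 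Put $v_n:=D_{x_k}u_n$; then $v_n\in C^\infty([S,T]\times\Omega)$ vanishes for large $|x|$, $v_n(S,\cdot)\equiv0$, and (integrating by parts in $t$, using $I_S^{1-\alpha}v_n(S,\cdot)=0$ and $u_n(S,\cdot)=0$) the identity \eqref{eq0720_01} holds for $v_n$ with $g_{x_k}=\partial_t^\alpha u_n$ and the remaining $g_i,f$ equal to zero; in particular $v_n\in\cH_{p,0}^{\alpha,1}(\Omega_T)$. Since for smooth functions $\partial_tI_S^{1-\alpha}$ and $D_{x_k}$ commute and $\partial_t^\alpha u_n=D_t^\alpha u_n$, one gets, by facts (a)--(b),
\begin{align*}
\|v_n-v_m\|_{L_p}+\|D_x(v_n-v_m)\|_{L_p}&\le\|u_n-u_m\|_{\bH_p^{\alpha,2}},\\
\|D_t^\alpha(v_n-v_m)\|_{\bH_p^{-1}}&\le\|D_t^\alpha(u_n-u_m)\|_{L_p}\le\|u_n-u_m\|_{\bH_p^{\alpha,2}},
\end{align*}
so $\{v_n\}$ is Cauchy in $\cH_p^{\alpha,1}(\Omega_T)$. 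Its $L_p$-limit equals $v=D_{x_k}u$, and letting $n\to\infty$ in \eqref{eq0720_01} for $v_n$ — using that $I_S^{1-\alpha}$ is bounded on $L_p(\Omega_T)$ since $T<\infty$, and that $\partial_t^\alpha u_n\to D_t^\alpha u$ in $L_p$ — shows \eqref{eq0720_01} holds for $v$ with $g_{x_k}=D_t^\alpha u$, i.e. $D_t^\alpha v=D_{x_k}(D_t^\alpha u)\in\bH_p^{-1}$, $v\in\cH_p^{\alpha,1}(\Omega_T)$, and $\|v_n-v\|_{\cH_p^{\alpha,1}}\to0$.

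Finally, since the $v_n$ are smooth, vanish for large $|x|$, satisfy $v_n(S,\cdot)\equiv0$, and converge to $v$ in $\cH_p^{\alpha,1}(\Omega_T)$, we conclude $v=D_{x_k}u\in\cH_{p,0}^{\alpha,1}(\Omega_T)$, which is the assertion. I do not expect a real obstacle here: the only mildly technical points are the commutation $\partial_tI_S^{1-\alpha}D_{x_k}=D_{x_k}\partial_tI_S^{1-\alpha}$ on the smooth approximants (immediate by differentiating under the integral sign defining $I_S^{1-\alpha}$) and the routine passage to the limit in the weak formulation. In essence the lemma merely records that the definitions of $\bH_{p,0}^{\alpha,2}(\Omega_T)$ and $\cH_{p,0}^{\alpha,1}(\Omega_T)$ are compatible under one spatial differentiation.
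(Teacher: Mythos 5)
Your argument is correct and coincides essentially with the paper's proof: both differentiate the approximating sequence $u_n$ of $u\in\bH_{p,0}^{\alpha,2}$ to produce $v_n=D_{x_k}u_n$, use the commutation $D_{x_k}\partial_tI_0^{1-\alpha}=\partial_tI_0^{1-\alpha}D_{x_k}$, observe that $D_{x_k}$ maps $L_p$ into $\bH_p^{-1}$ with no increase of norm, and pass to the limit. The only cosmetic difference is that the paper first verifies the weak identity \eqref{eq0720_01} directly for $D_{x_k}u$ and then handles the approximation, while you fold both into a single Cauchy-sequence argument; the substance is identical.
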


\begin{proof}
Denote $D_{x_k} u$ by $D_k u$.
To show that $D_k u \in \cH_{p,0}^{\alpha,1}(\Omega_T)$, we need to check that $D_k u \in \cH_p^{\alpha, 1}(\Omega_T)$ and there exists a sequence $\{v_n\} \subset C^\infty\left([0,T] \times \Omega\right)$ such that $v_n$ vanishes for large $|x|$, $v_n(0,x) = 0$, and
$$
\|v_n - D_k u\|_{\cH_p^{\alpha,1}(\Omega_T)} \to 0
$$
as $n \to \infty$.

Note that $D_k u, D D_k u \in L_p(\Omega_T)$.
Also note that, for any $\varphi \in C_0^\infty\left([0,T) \times \Omega\right)$,
$$
\int_0^T \int_{\Omega} I_0^{1-\alpha} (D_k u) \, \partial_t \varphi \, dx \, dt = \int_0^T \int_{\Omega} \partial_t \left(I_0^{1-\alpha} u\right) \, D_k \varphi \, dx \, dt.
$$
Thus, $D_k u \in \cH_p^{\alpha, 1}(\Omega_T)$ and
\begin{equation}
							\label{eq0711_01}
\partial_t I_0^{1-\alpha}(D_k u) = D_k \left( \partial_t I_0^{1-\alpha}u \right) \in \bH_p^{-1}(\Omega_T).
\end{equation}

To find a sequence $\{v_n\}$ satisfying the aforementioned properties, we recall that $u \in \bH_{p,0}^{\alpha,2}(\Omega_T)$, which means that there exists a sequence $\{u_n\} \subset C^\infty\left([0,T] \times \Omega\right)$ such that $u_n$ vanishes for large $|x|$, $u_n(0,x) = 0$, and
\begin{equation}
							\label{eq0710_04}
\|u_n - u\|_{\bH_p^{\alpha,2}(\Omega_T)} \to 0
\end{equation}
as $n \to \infty$.
Set $v_n = D_k u_n$.
Then, $\{v_n\} \subset C^\infty\left([0,T] \times \Omega\right)$, $v_n$ vanishes for large $|x|$, $v_n(0,x) = 0$, and
$$
\|v_n - D_k u\|_{L_p(\Omega_T)} + \|D v_n - D D_ku\|_{L_p(\Omega_T)} \to 0
$$
as $n \to \infty$.
We also have
$$
\| \partial_t I_0^{1-\alpha} v_n - \partial_t I_0^{1-\alpha} (D_k u)\|_{\bH_p^{-1}(\Omega_T)} \to 0
$$
as $n \to \infty$.
Indeed, \eqref{eq0711_01} shows that
$$
\partial_t \left(I_0^{1-\alpha} v_n - I_0^{1-\alpha} (D_k u) \right) = D_k \left( \partial_t I_0^{1-\alpha} u_n - \partial_t I_0^{1-\alpha} u \right).
$$
Hence,
$$
\| \partial_t I_0^{1-\alpha} v_n - \partial_t I_0^{1-\alpha} (D_k u)\|_{\bH_p^{-1}(\Omega_T)} \leq
\|\partial_t I_0^{1-\alpha} u_n - \partial_t I_0^{1-\alpha} u \|_{L_p(\Omega_T)} \to 0
$$
as $n \to \infty$, where the convergence of the right-hand side is guaranteed by \eqref{eq0710_04}.
The lemma is proved.
\end{proof}

We now prove an $L_p$-estimate for equations in divergence form when $a^{ij}$ are either functions of only $t$ or $x_1$.
In fact, the result follows from the previous results in \cite{MR3899965, MR4030286} combined with the above lemma.

\begin{proposition}[Divergence case]
							\label{prop0808_01}
Let $\alpha \in (0,1)$, $p \in (1,\infty)$, $T \in (0,\infty)$, and $a^{ij}$ satisfy Assumption \ref{assum0807_01}.
Then, for any $u \in \cH_{p,0}^{\alpha,1}(\bR^d_T)$ satisfying
\begin{equation}
							\label{eq0711_02}
- \partial_t^\alpha u + D_i \left( a^{ij} D_j u \right) = D_i g_i + f
\end{equation}
in $\bR^d_T$, where $g_i, f \in L_p(\bR^d_T)$, we have
\begin{equation}
							\label{eq0711_05}
\|Du\|_{L_p(\bR^d)} \leq N_0 \|g\|_{L_p(\bR^d_T)} + N_0T^{\alpha/2}\|f\|_{L_p(\bR^d_T)},
\end{equation}
where $N_0 = N_0(d,\delta,\alpha, p)$.
\end{proposition}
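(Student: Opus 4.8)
The plan is to dispatch the two alternatives in Assumption~\ref{assum0807_01} separately, in each case reducing to an $L_p$ theory that is already available and then fixing the dependence on $T$ by a parabolic scaling.

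Suppose first that (ii) holds, i.e.\ $a^{ij}=a^{ij}(x_1)$. Then $a^{ij}$ is, trivially, merely measurable in $x_1$ with zero mean oscillation in $(t,x')$, so the solvability result for divergence-form equations from \cite{MR4030286} applies, giving \eqref{eq0711_05} on a unit time interval with a $T$-dependent constant. To recover the sharp dependence in \eqref{eq0711_05} — no power of $T$ in front of $\|g\|_{L_p}$ and $T^{\alpha/2}$ in front of $\|f\|_{L_p}$ — I would use the rescaling $v(s,y)=u(Ts,T^{\alpha/2}y)$, which leaves the ellipticity constant unchanged and hence keeps us in the admissible class; tracking the $L_p$-norms of $Du$, $g$, and $f$ through this change of variables (the bookkeeping indicated in Remark~\ref{rem0811_1}) turns the $T$-dependent estimate into \eqref{eq0711_05}.

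Now suppose (i) holds, i.e.\ $a^{ij}=a^{ij}(t)$. Since the coefficients are independent of $x$, one has $D_i(a^{ij}D_j\cdot)=a^{ij}D_{ij}\cdot$, so formally the equation is of non-divergence type, the only gap being that $\cH_{p,0}^{\alpha,1}$ is larger than $\bH_{p,0}^{\alpha,2}$. I would first build a solution carrying the right bound: using the non-divergence theory of \cite{MR3899965} (whose coefficients are merely measurable in $t$, hence cover $a^{ij}(t)$), solve, for each $i$, $-\partial_t^\alpha w_i+a^{kl}D_{kl}w_i=g_i$ and also $-\partial_t^\alpha v+a^{kl}D_{kl}v=f$ in $\bR^d_T$ with $w_i,v\in\bH_{p,0}^{\alpha,2}$, so that (after a scaling making the constants $T$-independent) $\|D^2w_i\|_{L_p}\le N_0\|g_i\|_{L_p}$ and $\|\partial_t^\alpha v\|_{L_p}+\|D^2v\|_{L_p}\le N_0\|f\|_{L_p}$. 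Combining the latter with the interpolation $\|Dv\|_{L_p(\bR^d_T)}\le N\|v\|_{L_p(\bR^d_T)}^{1/2}\|D^2v\|_{L_p(\bR^d_T)}^{1/2}$ and the inequality $\|v\|_{L_p(\bR^d_T)}\le NT^{\alpha}\|\partial_t^\alpha v\|_{L_p(\bR^d_T)}$ (valid because $v$ vanishes at $t=0$, so $v=I^\alpha\partial_t^\alpha v$ and Young's inequality applies to the $\alpha$-th integral on $(0,T)$) gives $\|Dv\|_{L_p}\le N_0T^{\alpha/2}\|f\|_{L_p}$. By Lemma~\ref{lem0711_1}, $D_iw_i\in\cH_{p,0}^{\alpha,1}$, hence $\bar u:=\sum_iD_iw_i+v\in\cH_{p,0}^{\alpha,1}$; differentiating the $w_i$-equation in $x_i$, summing over $i$, and adding the $v$-equation (all legitimate since $a^{kl}$ is $x$-independent) shows that $\bar u$ satisfies \eqref{eq0711_02}, while $\|D\bar u\|_{L_p}\le\sum_i\|D^2w_i\|_{L_p}+\|Dv\|_{L_p}\le N_0\|g\|_{L_p}+N_0T^{\alpha/2}\|f\|_{L_p}$.

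What remains — and this is the one genuinely non-quotational step, the place requiring an argument of its own — is to identify the given $u$ with $\bar u$, i.e.\ to prove uniqueness in $\cH_{p,0}^{\alpha,1}$ for the homogeneous equation $-\partial_t^\alpha h+D_i(a^{ij}(t)D_jh)=0$. For this I would mollify $h$ in the spatial variables: since $a^{ij}$ does not depend on $x$, the mollification $h^{(\varepsilon)}$ solves the same equation, and because $D^2h^{(\varepsilon)}=Dh\ast D\rho_\varepsilon\in L_p$ and then $\partial_t^\alpha h^{(\varepsilon)}=a^{ij}D_{ij}h^{(\varepsilon)}\in L_p$, we get $h^{(\varepsilon)}\in\bH_{p,0}^{\alpha,2}(\bR^d_T)$; the uniqueness part of \cite{MR3899965} forces $h^{(\varepsilon)}=0$, and letting $\varepsilon\to0$ yields $h=0$. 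Consequently $u=\bar u$, so $u$ inherits the bound \eqref{eq0711_05}, which completes the proof.
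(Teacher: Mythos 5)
Your construction in the $a^{ij}=a^{ij}(t)$ case is essentially the same as the paper's: solve $d$ non-divergence problems with right-hand sides $g_k$ plus one with right-hand side $f$, and combine $\sum_k D_k w_k + w_0$ into a candidate $\cH^{\alpha,1}_{p,0}$-solution of \eqref{eq0711_02}, with Lemma \ref{lem0711_1} supplying membership of the spatial derivatives in the divergence-form solution class. The genuine divergence from the paper is the identification step. The paper proves $u=v$ via Lemma 4.1 of \cite{MR4030286} (an $L_p$-bound requiring $p\in[2,\infty)$) and then handles $p\in(1,2)$ by duality — i.e.\ for small $p$ it does not argue $u=v$ at all but derives the estimate directly by testing. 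You instead prove uniqueness of the homogeneous equation in $\cH_{p,0}^{\alpha,1}$ for all $p\in(1,\infty)$ at once: spatial mollification commutes with the $x$-independent operator, upgrades the mollified difference to $\bH_{p,0}^{\alpha,2}$ (indeed $D^2 h^{(\varepsilon)}=Dh*D\rho_\varepsilon$ and then $\partial_t^\alpha h^{(\varepsilon)}=a^{ij}D_{ij}h^{(\varepsilon)}$ land in $L_p$, and the defining approximating sequence carries over under mollification), and the non-divergence uniqueness of \cite{MR3899965} kills it. This is a cleaner route that avoids the case split on $p$ and the duality argument; it is legitimate precisely because the coefficients here are $x$-independent, so the commuting-mollification trick is available. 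Two small remarks: (a) your rescaling $v(s,y)=u(Ts,T^{\alpha/2}y)$ is the inverse of the one in Remark \ref{rem0811_1} but does the same bookkeeping and yields exactly $N_0\|g\|+N_0T^{\alpha/2}\|f\|$; (b) your interpolation + $\|v\|_{L_p}\le N T^\alpha\|\partial_t^\alpha v\|_{L_p}$ derivation of $\|Dw_0\|\le N T^{\alpha/2}\|f\|$ is correct and simply makes explicit the $T$-dependence that the paper sidesteps by first normalizing $T=1$.
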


\begin{proof}
By scaling, without loss of generality we may assume that $T=1$.
In the case that $a^{ij} = a^{ij}(x_1)$, the result follows from \cite[Theorem 2.1]{MR4030286}.
In particular, see the estimate (2.4) in \cite[Theorem 2.1]{MR4030286}.

To deal with the case that $a^{ij}=a^{ij}(t)$, for each $k = 1,2,\ldots,d$, by using \cite[Theorem 2.1]{MR3899965} find a unique $w_k \in \bH_{p,0}^{\alpha,2}(\bR^d_1)$ satisfying
$$
- \partial_t^\alpha w_k + a^{ij}(t) D_{ij} w_k = g_k
$$
in $\bR^d_1$ with the estimate
\begin{equation}
							\label{eq0711_04}
\|D^2 w_k\|_{L_p(\bR^d_1)} \leq N_0 \|g_k\|_{L_p(\bR^d_1)},
\end{equation}
where $N_0 = N_0(d,\delta,\alpha,p)$.
There also exists a unique $w_0 \in \bH_{p,0}^{\alpha,2}(\bR^d_1)$ satisfying
$$
-\partial_t^\alpha w_0 + a^{ij}(t) D_{ij}w_0 = f
$$
in $\bR^d_1$ with the estimate
\begin{equation}
							\label{eq0713_01}
\|Dw_0\|_{L_p(\bR^d_1)} \leq N_1 \|f\|_{L_p(\bR^d_1)},
\end{equation}
where $N_1 = N_1(d,\delta,\alpha,p)$.
By Lemma \ref{lem0711_1}, $D_k w_k \in \cH_{p,0}^{\alpha,1}(\bR^d_1)$.
Clearly, $w_0 \in \cH_{p,0}^{\alpha,1}(\bR^d_1)$.
Moreover, $D_k w_k$ and $w_0$ satisfy the divergence type equations
\begin{equation*}
- \partial_t^\alpha D_k w_k + D_i\left(a^{ij}(t) D_j D_k w_k \right) = D_k g_k,
\end{equation*}
$$
- \partial_t^\alpha w_0 + D_i\left(a^{ij}(t) D_j w_0 \right) = f,
$$
respectively, in $\bR^d_1$.
Now we set
$$
v = w_0 + \sum_{k=1}^d D_k w_k,
$$
which belongs to $\cH_{p,0}^{\alpha,1}(\bR^d_1)$ and satisfies \eqref{eq0711_02}.
Then, if $p \in [2,\infty)$, by \cite[Lemma 4.1]{MR4030286} $v$ equals to $u$, which shows that the estimate \eqref{eq0711_05} follows from \eqref{eq0711_04} and \eqref{eq0713_01}.
For $p \in (1,2)$, we use the duality argument to obtain the estimate \eqref{eq0711_05}. See the proofs of \cite[Theorem 2.1]{MR4030286} and Proposition \ref{prop0118_1} below.
\end{proof}

We now present the main result of this section.
The proof is based on Proposition \ref{prop0808_01} and the scaling argument used, for instance, in \cite{MR2764911}.
Since the regularity of $a^{ij}$ for $(i,j) \neq (1,1)$ does not play any role, we allow them to be only measurable.

\begin{lemma}
							\label{lem0812_2}
Let $\alpha \in (0,1)$, $p \in (1,\infty)$, $T \in (0,\infty)$, and $a^{11}$ satisfy Assumption \ref{assum0808_2}.
The other coefficients $a^{ij}$, $(i,j) \neq (1,1)$, are measurable functions of $(t,x)$.
Then, for any $u \in \cH_{p,0}^{\alpha,1}(\bR^d_T)$ satisfying
\begin{equation*}
- \partial_t^\alpha u + D_i \left( a^{ij} D_j u \right) = D_i g_i + f
\end{equation*}
in $\bR^d_T$, where $g_i, f \in L_p(\bR^d_T)$, we have
\begin{equation}
							\label{eq0812_02}
\|Du\|_{L_p(\bR^d)} \leq N_0 \|D_{x'}u\|_{L_p(\bR^d)} + N_0 \|g\|_{L_p(\bR^d)} + N_0T^{\alpha/2} \|f\|_{L_p(\bR^d)},
\end{equation}
where $N_0 = N_0(d,\delta,\alpha, p)$.
\end{lemma}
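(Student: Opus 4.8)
The plan is to deduce the estimate from Proposition \ref{prop0808_01} by replacing the coefficient matrix $a = (a^{ij})$ with the diagonal matrix $\bar a = \operatorname{diag}(a^{11}, 1, \dots, 1)$ and controlling the error terms via an anisotropic dilation in the $x'$-variables. Note first that $\bar a$ still satisfies the ellipticity condition with the same $\delta$ (since $\delta<1$) and, crucially, $\bar a$ satisfies Assumption \ref{assum0807_01}: case (i) when $a^{11}=a^{11}(t)$ and case (ii) when $a^{11}=a^{11}(x_1)$, the constant entries being regarded as functions of $t$, respectively of $x_1$.

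First I would fix $\kappa\ge 1$ (to be chosen later, depending only on $d,\delta,\alpha,p$) and set $\hat u(t,x)=u(t,x_1,\kappa x')$, $\hat g_i(t,x)=g_i(t,x_1,\kappa x')$, $\hat f(t,x)=f(t,x_1,\kappa x')$. Since the variables $t$ and $x_1$ are untouched and the $a^{ij}$ do not depend on $x'$, one checks that $\hat u\in\cH_{p,0}^{\alpha,1}(\bR^d_T)$ (the zero initial condition and the approximating sequence survive, and the $L_p$-norms change only by a power of $\kappa$) and that $\hat u$ solves, in the weak sense of Section \ref{sec2},
\begin{equation*}
-\partial_t^\alpha \hat u + D_1\!\left(a^{11}D_1\hat u\right) + \Delta_{x'}\hat u = D_i\hat h_i + \hat f,
\end{equation*}
where $\hat h_1 = \hat g_1 - \kappa^{-1}\sum_{k\ge 2}a^{1k}D_k\hat u$ and, for $i\ge 2$,
\begin{equation*}
\hat h_i = D_i\hat u + \kappa^{-1}\hat g_i - \kappa^{-1}a^{i1}D_1\hat u - \kappa^{-2}\sum_{k\ge 2}a^{ik}D_k\hat u .
\end{equation*}
The point of the scaling is that every term on the right that involves $D_1\hat u$ now carries a factor $\kappa^{-1}$, while the remaining contributions are either the $D_k\hat u$ ($k\ge2$) coming from $\Delta_{x'}\hat u$, terms with a $\kappa^{-1}$ or $\kappa^{-2}$ in front, or data.

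Next I would apply Proposition \ref{prop0808_01} to $\hat u$ with coefficients $\bar a$, which gives $\|D\hat u\|_{L_p(\bR^d_T)}\le N_0\|\hat h\|_{L_p(\bR^d_T)}+N_0T^{\alpha/2}\|\hat f\|_{L_p(\bR^d_T)}$; the time interval, hence the factor $T^{\alpha/2}$, is unaffected by the $x'$-dilation. Using $|a^{ij}|\le\delta^{-1}$ one bounds $\|\hat h\|_{L_p}$ by $N\|D_{x'}\hat u\|_{L_p}+N\kappa^{-1}\|D_1\hat u\|_{L_p}+N\|\hat g\|_{L_p}$ with $N=N(d,\delta)$, then chooses $\kappa=\kappa(d,\delta,\alpha,p)$ so large that $N_0N\kappa^{-1}\le 1/2$ and absorbs the $\|D_1\hat u\|_{L_p}$-term into the left-hand side, obtaining
\begin{equation*}
\|D\hat u\|_{L_p(\bR^d_T)} \le N_1\|D_{x'}\hat u\|_{L_p(\bR^d_T)} + N_1\|\hat g\|_{L_p(\bR^d_T)} + N_1T^{\alpha/2}\|\hat f\|_{L_p(\bR^d_T)}
\end{equation*}
with $N_1=N_1(d,\delta,\alpha,p)$. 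Undoing the change of variables multiplies the $L_p$-norms of $D_1 u$, $g$ and $f$ by a common factor $\kappa^{(d-1)/p}$ and that of $D_{x'}u$ by $\kappa\cdot\kappa^{(d-1)/p}$; the common factor cancels, leaving one extra power of $\kappa$ in front of $\|D_{x'}u\|_{L_p}$. Since $\kappa$ is now fixed, adding $\|D_{x'}u\|_{L_p}$ to both sides yields \eqref{eq0812_02}.

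The main obstacle, and the reason the dilation is indispensable, is that the naive substitution $\bar a=\operatorname{diag}(a^{11},1,\dots,1)$ forces the cross terms $\sum_{i\ge2}a^{i1}D_1 u$ onto the right-hand side, where they contribute an $O(1)$, non-absorbable multiple of $\|D_1u\|_{L_p}$; the scaling is precisely what renders that coefficient small. Apart from this, the only points requiring care are that the transformation preserves membership in $\cH_{p,0}^{\alpha,1}(\bR^d_T)$ and does not degrade the $T^{\alpha/2}$-dependence (it does not, since time is left untouched), both of which are routine.
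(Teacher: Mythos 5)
Your proof is correct and takes essentially the same approach as the paper: an anisotropic change of variables that puts a small prefactor in front of the cross terms involving $D_1u$, followed by an application of Proposition \ref{prop0808_01} with coefficients $\operatorname{diag}(a^{11},1,\dots,1)$ and an absorption argument. The only difference is cosmetic — you dilate the $x'$-variables by $\kappa$ while the paper contracts $(t,x_1)$ by $(\mu^{-2/\alpha},\mu^{-1})$ and leaves $x'$ fixed; your variant conveniently keeps the time interval $(0,T)$ unchanged so that the $T^{\alpha/2}$ factor tracks through trivially, but both scalings achieve the same smallness of the $D_1\hat u$ coefficient and yield the lemma.
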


\begin{proof}
Let $w(t,x_1,x') = u(\mu^{-2/\alpha}t,\mu^{-1}x_1,x')$ with an appropriate positive constant $\mu$ to be chosen below.
Note that $w$ belongs to $\cH_{p,0}^{\alpha,1}\left( (0,\mu^{2/\alpha}T) \times \bR^d \right)$ and satisfies
\begin{multline}
							\label{eq0812_01}
-\partial_t^\alpha w + D_1 \left( \tilde{a}^{11} D_1 w \right) + \mu^{-1} \sum_{i=2}^d D_i \left( \tilde{a}^{i1} D_1 w \right) + \mu^{-1} \sum_{j=2}^d D_1 \left( \tilde{a}^{1j} D_j w \right)
\\
+ \mu^{-2} \sum_{i,j=2}^d D_i \left( \tilde{a}^{ij} D_j w \right) = \mu^{-2} D_i \tilde{g}_i + \mu^{-2} \tilde{f}
\end{multline}
in $(0,\mu^{2/\alpha}T) \times \bR^d$, where
$$
\tilde{a}^{11} = a^{11}(\mu^{-2/\alpha} t) \quad \text{or} \quad \tilde{a}^{11} = a^{11}(\mu^{-1} x_1),
$$
$$
\tilde{a}^{ij}(t,x_1,x') = a^{ij}(\mu^{-2/\alpha}t, \mu^{-1}x_1,x'), \quad (i,j) \neq (1,1),
$$
$$
\tilde{g}(t,x_1,x) = (\mu g_1, g_2, \ldots, g_d)(\mu^{-2/\alpha}t,\mu^{-1}x_1,x'),
$$
$$
\tilde{f}(t,x_1,x') = f(\mu^{-2/\alpha}t,\mu^{-1}x_1,x').
$$
By adding $\Delta_{x'}w$ to both sides of \eqref{eq0812_01}, we write
\begin{align*}
&-\partial_t^\alpha w + D_1\left(\tilde{a}^{11} D_1 w \right) + \sum_{i=2}^d D_i^2 w = D_1 \bigg( \mu^{-2} \tilde{g}_1 - \mu^{-1} \sum_{j=2}^d \tilde{a}^{1j} D_j w \bigg)\\
&\quad+ \sum_{i=2}^d D_i \bigg(\mu^{-2}\tilde{g}_i + D_i w - \mu^{-1} \tilde{a}^{i1} D_1 w - \mu^{-2}\sum_{j=2}^d \tilde{a}^{ij} D_j w \bigg) + \mu^{-2}\tilde{f}
\end{align*}
in $(0,\mu^{2/\alpha}T) \times \bR^d$.
Since the coefficients of the above equation satisfy Assumption \ref{assum0807_01} (indeed, $\tilde{a}^{11}$ is a measurable function of only $t$ or $x_1$ and the other coefficients are constant), we apply Proposition \ref{prop0808_01} to get
\begin{align*}
\|Dw\|_p &\leq N_0 \left(\|D_{x'}w\|_p + \mu^{-1}\|Dw\|_p + \mu^{-2}\|D_{x'}w\|_p\right.\\
 &\quad \left.+ \mu^{-2}\|\tilde{g}_i\|_{L_p}\right) + N_0(\mu^{2/\alpha}T)^{\alpha/2} \mu^{-2}\|\tilde{f}\|_p,
\end{align*}
where $\|\cdot\|_p = \|\cdot\|_{L_p((0,\mu^{2/\alpha}T) \times \bR^d)}$ and $N_0 = N_0(d,\delta,\alpha,p)$.
We choose $\mu$ sufficiently large such that $N_0\mu^{-1} \leq 1/2$ and absorb the $\|Dw\|_p$ term to the left-hand side.
Finally, we return back to $u$ to obtain the estimate \eqref{eq0812_02}.
The lemma is proved.
\end{proof}

\section{Level set argument for equations in divergence form with measurable coefficients}
\label{sec4}

This section is devoted the level set estimate of $D_{x'}u$ for solutions to divergence form equations.

\begin{lemma}
							\label{lem0811_1}
Let $\alpha \in (0,1)$, $p_0\in (1,\infty)$, $- \infty < S \leq t_0 < T < \infty$, $0 < r < R < \infty$ and $a^{ij}$ satisfy Assumption \ref{assum0808_2}.
If Theorem \ref{thm0808_01} holds with this $p_0$ and $v\in \cH^{\alpha,1}_{p_0,0}\left((S,T)\times B_R\right)$ satisfies
\begin{equation*}
-\partial_t^\alpha v + D_i\left( a^{ij} D_j v \right) = D_i \Psi_i
\end{equation*}
in $(S,T) \times B_R$, where $\partial_t^\alpha = \partial_t I_S^{1-\alpha}$ and $\Psi_i \in L_{p_0}\left((S,T) \times B_R\right)$ with $\Psi_i = 0$ on $(t_0,T) \times B_R$,
then for any infinitely differentiable function $\eta(t)$ defined on $\bR$ such that $\eta(t) = 0$ for $t \leq t_0$, the function $D_\ell (\eta v)$, $\ell = 2, \ldots,d$, belongs to $\cH_{p_0,0}^{\alpha,1}\left((t_0,T) \times B_r\right)$ and satisfies
\begin{equation}
							\label{eq0810_01}
- \partial_t^\alpha \left( D_\ell (\eta v) \right) + D_i \left( a^{ij} D_j D_\ell (\eta v) \right) = \cG_\ell
\end{equation}
in $(t_0,T) \times B_r$,
where $\partial_t^\alpha = \partial_t I_{t_0}^{1-\alpha}$ and
\begin{equation*}
\cG_\ell = \frac{\alpha}{\Gamma(1-\alpha)} \int_S^t (t-s)^{-\alpha-1} \left(\eta(s) - \eta(t) \right) D_\ell v(s,x) \, ds.
\end{equation*}
Moreover, for $\ell = 2,\ldots,d$,
\begin{equation}
							\label{eq0812_03}
\|D_\ell(\eta v)\|_{\cH_{p_0}^{\alpha,1}\left((t_0,T) \times B_r\right)} \leq N \|D_\ell(\eta v)\|_{L_{p_0} \left((t_0,T) \times B_R\right)} + N \|\cG_\ell\|_{L_{p_0} \left((t_0,T) \times B_R\right)},
\end{equation}
where $N = N(d,\delta,\alpha,p_0,r,R)$.
If we additionally assume that $a^{ij}=a^{ij}(t)$, then $D_1(\eta v) \in \cH_{p_0,0}^{\alpha,1}\left((t_0,T) \times B_r\right)$ and $D_1(\eta v)$ satisfies \eqref{eq0810_01} and \eqref{eq0812_03} with $\ell = 1$.
\end{lemma}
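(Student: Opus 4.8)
The strategy is to differentiate the equation in the tangential directions $x_\ell$ and track the extra term produced by the nonlocal time derivative when the cutoff $\eta(t)$ is inserted. First I would note that since $v \in \cH^{\alpha,1}_{p_0,0}((S,T)\times B_R)$ and $\eta$ is smooth with $\eta \equiv 0$ for $t \le t_0$, the product $\eta v$ lies in $\cH^{\alpha,1}_{p_0,0}((t_0,T)\times B_R)$ with $I^{1-\alpha}_{t_0}(\eta v)$ computable; the key point is the commutator identity
\[
\partial_t I_S^{1-\alpha}(\eta v) = \eta\, \partial_t^\alpha v + \frac{\alpha}{\Gamma(1-\alpha)}\int_S^t (t-s)^{-\alpha-1}\bigl(\eta(s)-\eta(t)\bigr) v(s,x)\,ds,
\]
valid first for smooth $v$ by direct differentiation under the integral and then in general by the approximation sequence defining $\cH^{\alpha,1}_{p_0,0}$. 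Because $\eta$ vanishes near $t_0$, the right side extends past the base point so that $\partial_t I_S^{1-\alpha}(\eta v) = \partial_t I_{t_0}^{1-\alpha}(\eta v)$ on $(t_0,T)$, which reconciles the two meanings of $\partial_t^\alpha$ in the statement. Multiplying the equation for $v$ by $\eta$ and using this identity gives a divergence-form equation for $\eta v$ on $(t_0,T)\times B_R$ whose right side is $D_i(\eta\Psi_i)$ (which vanishes, since $\Psi_i=0$ for $t>t_0$) plus the commutator term; hence $-\partial_t^\alpha(\eta v) + D_i(a^{ij}D_j(\eta v)) = \mathcal{G}$ where $\mathcal{G}$ is the commutator integral with $v$ in place of $D_\ell v$.

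Next I would differentiate in $x_\ell$ for $\ell=2,\dots,d$. Since $a^{ij}=a^{ij}(t,x_1)$ are independent of $x'$, the operator $D_i(a^{ij}D_j\,\cdot\,)$ commutes with $D_\ell$, so formally $D_\ell(\eta v)$ solves \eqref{eq0810_01} with right side $D_\ell \mathcal{G} = \cG_\ell$ (differentiating the commutator integral under the integral sign in $x$). To make this rigorous I would work with difference quotients in $x_\ell$ rather than derivatives: apply the already-established equation for $\eta v$ to the translated function, subtract, divide by the increment, and pass to the limit, using that $D_\ell v \in L_{p_0}$ and that $\cG_\ell$ is the $L_{p_0}$-limit of the corresponding difference quotients of $\mathcal{G}$ (this needs a dominated-convergence estimate for the kernel $(t-s)^{-\alpha-1}(\eta(s)-\eta(t))$, which is integrable in $s$ on compact $t$-intervals because $|\eta(s)-\eta(t)|\le N|t-s|$ near the diagonal and $\eta$ is bounded). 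This gives $D_\ell(\eta v)\in \tilde\cH^{\alpha,1}_{p_0}$; membership in $\cH^{\alpha,1}_{p_0,0}((t_0,T)\times B_r)$, i.e. the approximation by smooth functions vanishing at $t=t_0$, follows by first mollifying $v$ in $x$ (which preserves the equation since the coefficients depend only on $(t,x_1)$ — actually mollify only in $x'$, or use the defining sequence for $v$) and then the cutoff $\eta$ supplies the vanishing at $t_0$.

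For the estimate \eqref{eq0812_03}: I would localize by choosing a spatial cutoff $\zeta \in C_0^\infty(B_{r'})$, $r<r'<R$, equal to $1$ on $B_r$, and write the equation satisfied by $\zeta\, D_\ell(\eta v)$ on $(t_0,T)\times B_R$ — this is again a divergence-form equation with right side $\cG_\ell$ plus lower-order terms coming from the commutator of $\zeta$ with $D_i(a^{ij}D_j\,\cdot\,)$, all of which are bounded in $L_{p_0}((t_0,T)\times B_R)$ by $N(\|D_\ell(\eta v)\|_{L_{p_0}} + \|\cG_\ell\|_{L_{p_0}})$. Since Theorem \ref{thm0808_01} is assumed to hold with exponent $p_0$ (extended from $\bR^d_T$ to a product of a cutoff-supported region by the usual argument, or directly because $\zeta D_\ell(\eta v)$ has compact spatial support and we may regard it on $\bR^d$), applying its a priori estimate \eqref{eq0811_02} to $\zeta D_\ell(\eta v)$ yields the bound on $\|D_\ell(\eta v)\|_{\cH^{\alpha,1}_{p_0}((t_0,T)\times B_r)}$ (absorbing the $\bH^{-1}_{p_0}$-norm of the time derivative via Remark \ref{rem0115_1} style reasoning). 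Finally, when $a^{ij}=a^{ij}(t)$ the coefficients are independent of $x_1$ as well, so the same commutation and difference-quotient argument runs verbatim with $\ell=1$, giving the last assertion.

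The main obstacle I expect is the rigorous justification of differentiating (or taking difference quotients of) the nonlocal commutator term $\mathcal{G}$ and of the identity $\partial_t I_S^{1-\alpha}(\eta v) = \partial_t I_{t_0}^{1-\alpha}(\eta v)$ on $(t_0,T)$ at the level of the weak ($\bH^{-1}_{p_0}$) formulation — i.e. verifying that the distributional time-derivative identities are preserved and that $\cG_\ell$ is genuinely the $L_{p_0}$ right-hand side, rather than the formal manipulations which are transparent for smooth $v$.
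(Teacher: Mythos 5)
Your approach is essentially the same as the paper's: multiply by $\eta$ and use the commutator identity (this is exactly the paper's appeal to \cite[Lemma 3.4]{MR4030286}), then take tangential difference quotients $\Delta_{\ell,h}$, pass to the limit, and apply the Caccioppoli-type local estimate (the paper's \cite[Lemma 4.3]{MR4030286}, which is precisely your spatial-cutoff argument applied to $\zeta D_\ell(\eta v)$); the extension to $\ell=1$ under $a^{ij}=a^{ij}(t)$ is also the same observation.

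The one spot where your proposal is underspecified is the membership $D_\ell(\eta v)\in\cH^{\alpha,1}_{p_0,0}\bigl((t_0,T)\times B_r\bigr)$ as opposed to merely $\cH^{\alpha,1}_{p_0}$. Mollifying $v$ in $x$ (or only in $x'$) does not produce functions in $C^\infty\bigl([t_0,T]\times B_r\bigr)$ vanishing at $t=t_0$, so it does not directly yield the required approximating sequence. The paper instead takes the defining approximating sequence $\{v_n\}\subset C^\infty\bigl([S,T]\times B_R\bigr)$ with $v_n(S,\cdot)=0$ from $v\in\cH^{\alpha,1}_{p_0,0}\bigl((S,T)\times B_R\bigr)$, forms $\Delta_{\ell,h}(\eta v_n)$, and extracts a diagonal subsequence $h_k\to0$, $n_k\to\infty$, verifying convergence of all three pieces of the $\cH^{\alpha,1}_{p_0}$-norm, including the $\bH^{-1}_{p_0}$-norm of the time derivative, by writing $D_t^\alpha\Delta_{\ell,h}(\eta(v-v_n))$ explicitly via the commutator identity. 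Your parenthetical ``or use the defining sequence for $v$'' points at the right fix, but as written the mollification route would not close.
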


\begin{proof}
Without loss of generality, we assume that $t_0 = 0$.
First, by Lemma 3.4 in \cite{MR4030286} $\eta v$ belongs to $\cH_{p_0,0}^{\alpha,1}\left((0,T) \times B_R\right)$ and satisfies
$$
\partial_t^\alpha (\eta v) = \partial_t I_0^{1-\alpha}(\eta v) = \eta \partial_t I_S^{1-\alpha} v - \cF
$$
in the sense of distribution in $(0,T) \times B_R$, where
\begin{equation*}
\cF(t,x) = \frac{\alpha}{\Gamma(1-\alpha)} \int_S^t (t-s)^{-\alpha-1} \left(\eta(s) - \eta(t) \right) v(s,x) \, ds.
\end{equation*}
That is, if
$$
\partial_t I_S^{1-\alpha} v = D_i g_i + f
$$
in $(S,T) \times B_R$ for $g_i, f \in L_{p_0}\left((S,T) \times B_R\right)$, then
\begin{equation}
							\label{eq0810_14}
\partial_t I_0^{1-\alpha} (\eta v) = D_i (\eta g_i) + \eta f - \cF
\end{equation}
in $(0,T) \times B_R$.
In particular, we have
\begin{equation}
							\label{eq0810_12}
- \partial_t^\alpha(\eta v) + D_i \left(a^{ij} D_j(\eta v) \right) = \cF
\end{equation}
in $(0,T) \times B_R$.

Next, we consider
$$
\Delta_{\ell,h}v(t,x) = \frac{v(t, x+ h e_\ell) - v(t,x)}{h}, \quad 0 < h < \frac{R-r}{2},
$$
which belongs to $\cH_{p_0,0}^1\left((0,T) \times B_{R_1}\right)$, where $R_1 = (R+r)/2$.
Since $v, Dv \in L_{p_0}\left((S,T) \times B_R\right)$, we have
\begin{equation}
							\label{eq0810_06}
\|\Delta_{\ell,h} v - D_\ell v\|_{L_{p_0}\left((S,T) \times B_{R_1}\right)} \to 0
\end{equation}
as $h \to 0$.
Upon noting that the coefficients $a^{ij}$ are functions of only $(t,x_1)$,
from \eqref{eq0810_12} we see that $\Delta_{\ell,h}v$, $\ell=2,\ldots,d$, satisfies
\begin{equation}
							\label{eq0810_02}
- \partial_t^\alpha(\eta \Delta_{\ell,h}v) + D_i \left(a^{ij} D_j(\eta \Delta_{\ell,h}v) \right) = \cG_{\ell,h}
\end{equation}
in $(0,T) \times B_{R_1}$, where
$$
\cG_{\ell,h} = \frac{\alpha}{\Gamma(1-\alpha)} \int_S^t (t-s)^{-\alpha-1} \left(\eta(s) - \eta(t) \right) \Delta_{\ell,h}v(s,x) \, ds.
$$
Note that $\cG_{\ell,h} \in L_{p_0}((0,T) \times B_{R_1})$ with
\begin{equation}
							\label{eq0810_03}
\|\cG_{\ell,h}\|_{L_{p_0}((0,T) \times B_{R_1})} \leq N \|\Delta_{\ell,h} v\|_{L_{p_0}((S,T) \times B_{R_1})} \leq N \|Dv\|_{L_{p_0}\left((S,T) \times B_R \right)},
\end{equation}
where $N$ depends only on $\alpha$, $p_0$, $S$, $T$, and the bound of $\eta'$.
See \cite[Lemma 3.4]{MR4030286}.
The first inequality in \eqref{eq0810_03} combined with \eqref{eq0810_06} shows that
\begin{equation}
							\label{eq0810_07}
\|\cG_{\ell,h} - \cG_\ell\|_{L_{p_0}((0,T) \times B_{R_1})} \le N\|\Delta_{\ell,h} v - D_\ell v\|_{L_{p_0}\left((S,T) \times B_{R_1}\right)} \to 0
\end{equation}
as $h \to 0$.
We now apply \cite[Lemma 4.3]{MR4030286} (this local estimate is available whenever the corresponding global estimate, i.e., Theorem \ref{thm0808_01} holds) to the equation \eqref{eq0810_02} to obtain
\begin{equation}
							\label{eq0810_04}
\begin{aligned}
\|D (\eta \Delta_{\ell,h} v)\|_{L_{p_0}\left((0,T) \times B_r\right)} \leq \frac{N}{R-r} \|\eta \Delta_{\ell,h} v\|_{L_{p_0}((0,T) \times B_{R_1})}
\\
+ N(R-r) \|\cG_{\ell,h}\|_{L_{p_0}((0,T) \times B_{R_1})} \leq N \|Dv\|_{L_{p_0}\left((S,T) \times B_R \right)},
\end{aligned}
\end{equation}
where $N = N(d,\delta,\alpha,p_0,r,R)$.
From this estimate with a slightly larger $r$, it follows that $D D_{x'} (\eta v) \in L_{p_0}\left((0,T) \times B_{r'}\right)$ for $r'\in (r, R_1)$, which combined with the property of $\Delta_{\ell ,h}$ implies
\begin{equation}
							\label{eq0810_05}
\|D(\eta \Delta_{\ell,h} v) - D D_\ell (\eta v)\|_{L_{p_0}\left((0,T) \times B_r\right)} \to 0
\end{equation}
as $h \to 0$.
Moreover, thanks to \eqref{eq0810_06} and  \eqref{eq0810_07}, by letting $h \to 0$ in the first inequality in \eqref{eq0810_04}, we have
\begin{equation}
							\label{eq0812_04}
\|DD_\ell(\eta v)\|_{L_{p_0}\left((0,T) \times B_r\right)} \leq N \|D_\ell(\eta v)\|_{L_{p_0} \left((0,T) \times B_R\right)} + N \|\cG_\ell\|_{L_{p_0} \left((0,T) \times B_R\right)},
\end{equation}
where $N = N(d,\delta,\alpha,p_0,r,R)$.
Now, since
\[
I_0^{1-\alpha} \left(\eta \Delta_{\ell,h} v \right) \to I_0^{1-\alpha} \left(\eta D_\ell v \right) \quad \text{in} \quad L_{p_0}\left((0,T) \times B_r\right)
\]
as $h \to 0$, by \eqref{eq0810_02} we see that $D_t^\alpha D_\ell (\eta v) \in \bH_{p_0}^{-1}\left((0,T) \times B_r\right)$ and
\begin{equation}
							\label{eq0810_10}
D_t^\alpha \left( D_\ell (\eta v) \right) = D_i \left( a^{ij} D_j ( D_\ell (\eta v) \right) - \cG_\ell, \quad \ell = 2,\ldots,d,
\end{equation}
in $(0,T) \times B_r$ with
\begin{equation}
							\label{eq0810_08}
\|D_t^\alpha \left( D_\ell (\eta v) \right) - \partial_t^\alpha \left(\eta \Delta_{\ell,h} v \right) \|_{\bH_{p_0}^{-1}\left((0,T) \times B_r\right)} \to 0
\end{equation}
as $h \to 0$.
In particular, the equality \eqref{eq0810_10} holds in the sense that
\begin{multline}
							\label{eq0810_11}
\int_0^T \int_{B_r} I_0^{1-\alpha} \left( D_\ell (\eta v) \right) \varphi_t \, dx \, dt
\\
= \int_0^T \int_{B_r} a^{ij} D_j\left(D_\ell (\eta v) \right) D_i \varphi \, dx \, dt + \int_0^T \int_{B_r} \cG_\ell \varphi \, dx \, dt
\end{multline}
for all $\varphi \in C_0^\infty\left([0,T) \times B_r\right)$.
Note that by the observations made above we have
$$
D_{x'} (\eta v), D D_{x'} (\eta v) \in L_{p_0}\left((0,T) \times B_r\right), \quad D_t^\alpha D_{x'} (\eta v) \in \bH_{p_0}^{-1}\left((0,T) \times B_r\right),
$$
from which and \eqref{eq0810_11}, however, one can only conclude that
$$
D_{x'} (\eta v) \in \cH_{p_0}^{\alpha,1}\left((0,T) \times B_r\right).
$$
Since $\cH_{p_0}^{\alpha,1}$ is a strict super set of $\cH_{p_0,0}^{\alpha,1}$ (see Section 3 in \cite{MR4030286}), to complete the proof,
we need to find a sequence $\{w_n\}$ such that $w_n \in C^\infty\left([0,T] \times B_r\right)$, $w_n(0,x) = 0$, and
\begin{align*}
\|w_n - D_\ell(\eta v)\|_{\cH_{p_0}^{\alpha,1}\left((0,T) \times B_r\right)} &= \| |w_n - D_\ell(\eta v)| + |Dw_n - DD_\ell(\eta v)| \|_{L_{p_0}(\left((0,T) \times B_r\right)}\\
&\quad + \|D_t^\alpha w_n - D_t^\alpha \left(D_\ell(\eta v) \right)\|_{\bH_{p_0}^{-1}\left((0,T) \times B_r\right)} \to 0
\end{align*}
as $n \to \infty$.
Assuming for the moment the existence of such a sequence and \eqref{eq0810_10}, one can conclude that $D_\ell(\eta v)$, $\ell = 2, \ldots,d$, belongs to $\cH_{p_0,0}^{\alpha,1}\left((0,T) \times B_r\right)$ and satisfies \eqref{eq0810_01}.
To see that \eqref{eq0812_03} holds, we simply combine \eqref{eq0812_04} and \eqref{eq0810_10} upon recalling the definition of the $\cH_{p_0}^{\alpha,1}$-norm, in particular, that of the $\bH_{p_0}^{-1}$-norm.

To find a sequence $\{w_n\}$, since $v \in \cH_{p_0,0}^{\alpha,1}\left((S,T) \times B_R\right)$, we recall that there exists a sequence $\{v_n\}$ such that $v_n \in C^\infty\left([S,T] \times B_R\right)$, $v_n(S,x) = 0$, and
$$
\|v_n - v\|_{\cH_{p_0}^{\alpha,1}\left((S,T) \times B_R\right)} \to 0
$$
as $n \to \infty$.
We then consider the sequence $\{\Delta_{\ell,h} (\eta v_n)\}_{h,n}$, which satisfies
$$
\Delta_{\ell,h} (\eta v_n) \in C^\infty\left([0,T] \times B_r\right),\quad
\Delta_{\ell,h}(\eta v_n)(0,x) = 0,
$$
and
$$
\| D_\ell (\eta v) - \Delta_{\ell,h}(\eta v_n)\|_{\cH_{p_0}^{\alpha,1}\left((0,T) \times B_r\right)} \to 0
$$
as $n \to \infty$ and $h \to 0$.
Indeed, the above quantity is bounded by
$$
\| D_\ell (\eta v) - \Delta_{\ell,h}(\eta v)\|_{\cH_{p_0}^{\alpha,1}\left((0,T) \times B_r\right)} + \| \Delta_{\ell,h}\left(\eta (v- v_n)\right)\|_{\cH_{p_0}^{\alpha,1}\left((0,T) \times B_r\right)},
$$
where the first term vanishes as $h \to 0$ due to \eqref{eq0810_06}, \eqref{eq0810_05}, and \eqref{eq0810_08}, and the second term can be made arbitrary small due to the choice of $\{v_n\}$ by letting $n \to \infty$ after a sufficiently small $h$ is fixed.
In particular, since $D_t^\alpha (v - v_n)$ vanishes as $n \to \infty$ in $\bH_{p_0}^{-1}\left((S,T) \times B_R\right)$, there exist $g_i^n, f^n \in L_{p_0}\left((S,T) \times B_R\right)$ such that $g_i^n, f^n \to 0$ in $L_{p_0}\left((S,T) \times B_R\right)$ and
$$
D_t^\alpha (v-v_n) = D_i g_i^n + f^n
$$
in $(S,T) \times B_R$.
Then, as seen in \eqref{eq0810_14},
\begin{equation}
							\label{eq0810_13}
D_t^\alpha \Delta_{\ell,h}\left(\eta (v- v_n)\right) = D_i \left( \Delta_{\ell,h}  (\eta g_i^n) \right) + \Delta_{\ell,h} (\eta f^n) - \cF_n,
\end{equation}
where
$$
\cF_n = \frac{\alpha}{\Gamma(1-\alpha)} \int_S^t (t-s)^{-\alpha-1} \left(\eta(s) - \eta(t) \right) \Delta_{\ell,h}(v-v_n) (s,x) \, ds,
$$
and, for each $h > 0$, the terms
$$
\Delta_{\ell,h}  (\eta g_i^n),\quad  \Delta_{\ell,h} (\eta f^n),\quad \text{and}\quad \cF_n
$$
on the right-hand side of \eqref{eq0810_13} vanish as $n \to 0$.
Therefore, one can come up with a desired sequence $\{w_n\}$ by choosing a subsequence $\{\Delta_{\ell,h_{k}} (\eta v_{n_k})\}$ with appropriate $h_{k} \to 0$ and $n_k\to \infty$.

If $a^{ij}=a^{ij}(t)$, we repeat the above proof with $\Delta_{1,h}v$. In particular, $\eta \Delta_{1,h}v$ satisfies \eqref{eq0810_02} with $\ell = 1$ because $a^{ij}$ are independent of $x_1$.
The lemma is proved.
\end{proof}

\begin{proposition}
                    \label{prop0811_1}
Let $\alpha \in (0,1)$, $p_0\in (1,\infty)$, $T \in (0,\infty)$, and $a^{ij}$ satisfy Assumption \ref{assum0807_01}.
If Theorem \ref{thm0808_01} holds with this $p_0$ and $u \in \cH^{\alpha,1}_{p_0,0}(\bR^d_T)$ satisfies
\begin{equation*}
-\partial_t^\alpha u + D_i\left( a^{ij} D_j v \right) = D_i g_i
\end{equation*}
in $(0,T) \times \bR^d$, where $g_i \in L_{p_0}(\bR^d_T)$, then for any $(t_0,x_0) \in [0,T] \times \bR^d$ and $R \in (0,\infty)$, there exist
$$
w \in \cH_{p_0,0}^{\alpha,1} \left( (t_0 - R^{2/\alpha},t_0) \times \bR^d \right), \quad v \in \cH_{p_0,0}^{\alpha,1}\left((S,t_0) \times \bR^d \right),
$$
where $S:= \min\{0, t_0-R^{2/\alpha}\}$, satisfying the following.
\begin{align*}
{\rm (i)}& \,\,\, u = w + v \,\, \text{in} \,\, Q_R(t_0,x_0),
\\
{\rm (ii)}& \,\,\left(|Dw|^{p_0}\right)^{1/p_0}_{Q_R(t_0,x_0)} \leq N \left(|g|^{p_0}\right)^{1/p_0}_{Q_{2R}(t_0,x_0)},
\\
{\rm (iii)}& \,\,\left(|D_{x'}v|^{p_1}\right)^{1/p_1}_{Q_{R/2}(t_0,x_0)} \leq N \left(|g|^{p_0}\right)^{1/p_0}_{Q_{2R}(t_0,x_0)} + N \sum_{k=0}^\infty 2^{-k\alpha} \left( |D_{x'}u|^{p_0} \right)^{1/p_0}_{Q^k_R(t_0,x_0)},
\end{align*}
where $Q^k_R(t_0,x_0) = \left(t_0-(2^{k+1}+1)R^{2/\alpha}, t_0\right) \times B_R(x_0)$, $N = N(d,\delta,\alpha,p_0)$, and $p_1 = p_1(d, \alpha,p_0)\in (p_0,\infty]$ with
\begin{equation*}
\frac{1}{p_1} \geq \frac{1}{p_0} - \frac{1}{d+2/\alpha}.
\end{equation*}
Here we understand that $u$ and $g_i$ are extended to be zero for $t < 0$ and
$$
\left( |D_{x'}v|^{p_1} \right)_{Q_{r/2}(t_1,0)}^{1/p_1} = \|D_{x'} v\|_{L_\infty\left(Q_{r/2}(t_1,0)\right)}
$$
provided that $p_1 = \infty$.
\end{proposition}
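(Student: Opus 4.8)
The plan is to run a Calder\'on--Zygmund type decomposition $u=w+v$ in which $w$ absorbs the free term $D_ig_i$ and is controlled globally via the assumed Theorem~\ref{thm0808_01}, while $v$ solves a homogeneous equation near $(t_0,x_0)$ and therefore---by Lemma~\ref{lem0811_1}---gains one $x'$-derivative; a parabolic Sobolev embedding then upgrades the integrability of $D_{x'}v$ from $p_0$ to $p_1$. First I would reduce to $R=1$ and $x_0=0$ by the parabolic scaling $(t,x)\mapsto(R^{2/\alpha}t,Rx)$ and a translation in the $x$-variable, both of which preserve Assumption~\ref{assum0807_01}; this normalization is essential because the constants in Lemma~\ref{lem0811_1} depend on the radii whereas the asserted $N$ must not. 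I would then fix $\zeta=\zeta(x)\in C_0^\infty(B_2)$ with $\zeta\equiv1$ on $B_1$ and let $w\in\cH^{\alpha,1}_{p_0,0}\big((t_0-1,t_0)\times\bR^d\big)$ be the solution of $-\partial_t^\alpha w+D_i(a^{ij}D_jw)=D_i(g_i\zeta)$ with origin $t_0-1$, which exists by Theorem~\ref{thm0808_01} with $\lambda=0$. Extending $w$ by zero for $t<t_0-1$ and $u$ by zero for $t<0$ and setting $v:=u-w$ on $(S,t_0)\times\bR^d$, I would get (i) on $Q_1(t_0,0)$ immediately, (ii) from the estimate \eqref{eq0811_02} (with $f=0$) together with $\operatorname{supp}\zeta\subset B_2$ and $|Q_2|/|Q_1|=2^{d+2/\alpha}$, and the identity $-\partial_t^\alpha v+D_i(a^{ij}D_jv)=D_i\Psi_i$ with $\Psi_i\equiv0$ on $(t_0-1,t_0)\times B_1$ since $\zeta\equiv1$ there.

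For (iii) I would pick a smooth $\eta=\eta(t)$ with $\eta\equiv0$ for $t\le t_0-1$, $\eta\equiv1$ for $t\ge t_0-\tfrac12$, and $|\eta'|\le N$. Since Theorem~\ref{thm0808_01} is assumed for $p_0$, Lemma~\ref{lem0811_1}---with larger ball $B_1$, smaller ball $B_{1/2}$, and this $\eta$---then gives, for $\ell=2,\dots,d$, that $D_\ell(\eta v)\in\cH^{\alpha,1}_{p_0,0}\big((t_0-1,t_0)\times B_{1/2}\big)$, satisfies \eqref{eq0810_01} with
\[
\cG_\ell(t,x)=\frac{\alpha}{\Gamma(1-\alpha)}\int_S^t(t-s)^{-\alpha-1}\big(\eta(s)-\eta(t)\big)D_\ell v(s,x)\,ds,
\]
and obeys the estimate \eqref{eq0812_03}, i.e. $\|D_\ell(\eta v)\|_{\cH^{\alpha,1}_{p_0}((t_0-1,t_0)\times B_{1/2})}$ is at most $N\|D_\ell(\eta v)\|_{L_{p_0}((t_0-1,t_0)\times B_1)}+N\|\cG_\ell\|_{L_{p_0}((t_0-1,t_0)\times B_1)}$. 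Because the parabolic dimension of $\cH^{\alpha,1}_{p_0}$ is $d+2/\alpha$, I would then invoke the Sobolev embedding of Theorem~\ref{thm0811_01} (or, when $d+2/\alpha<p_0$ so that $p_1=\infty$ is allowed, the H\"older embedding of Theorem~\ref{thm0717_1}), which bounds $\|D_\ell(\eta v)\|_{L_{p_1}((t_0-1,t_0)\times B_{1/2})}$ by the same right-hand side, with $\tfrac1{p_1}\ge\tfrac1{p_0}-\tfrac1{d+2/\alpha}$. Since $(1/2)^{2/\alpha}<1/2$ we have $\eta\equiv1$ on the time-interval of $Q_{1/2}(t_0,0)$, so this controls $\|D_{x'}v\|_{L_{p_1}(Q_{1/2}(t_0,0))}$ by the two $L_{p_0}$-quantities just mentioned.

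It remains to estimate those two quantities. For the first, $D_\ell(\eta v)=\eta(D_\ell u-D_\ell w)$ and $(t_0-1,t_0)\times B_1\subset Q^0_1(t_0,0)$, so it is $\le N\big(|D_{x'}u|^{p_0}\big)^{1/p_0}_{Q^0_1(t_0,0)}+N\|Dw\|_{L_{p_0}((t_0-1,t_0)\times\bR^d)}$, the last term being $\le N\big(|g|^{p_0}\big)^{1/p_0}_{Q_2(t_0,0)}$ by (ii). For $\cG_\ell$, the factor $\eta(s)-\eta(t)$ vanishes unless $s<t_0-\tfrac12$, so I would split the $s$-integral into the dyadic shells $t_0-s\in(2^{k-1},2^k)$, $k\ge1$; on the $k$-th shell $t-s\sim2^k$, whence $\int(t-s)^{-\alpha-1}\,ds\sim2^{-k\alpha}$, and the shell together with $B_1$ lies inside some $Q^k_1(t_0,0)$, so after Minkowski's and H\"older's inequalities in $s$ and normalization by the now-absolute volumes one gets
\[
\|\cG_\ell\|_{L_{p_0}((t_0-1,t_0)\times B_1)}\le N\sum_{k\ge0}2^{-k\alpha}\big(|D_{x'}v|^{p_0}\big)^{1/p_0}_{Q^k_1(t_0,0)}.
\]
Finally $\big(|D_{x'}v|^{p_0}\big)^{1/p_0}_{Q^k_1}\le\big(|D_{x'}u|^{p_0}\big)^{1/p_0}_{Q^k_1}+\big(|D_{x'}w|^{p_0}\big)^{1/p_0}_{Q^k_1}$, and since the extended $w$ vanishes for $t<t_0-1$ while $|Q^k_1|\ge|Q_1|$, the $w$-part is $\le N\big(|g|^{p_0}\big)^{1/p_0}_{Q_2}$ uniformly in $k$; summing the convergent series $\sum_k2^{-k\alpha}$ then collapses it into $N\big(|g|^{p_0}\big)^{1/p_0}_{Q_2}$. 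Undoing the scaling yields (iii) with $N=N(d,\delta,\alpha,p_0)$.

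The hardest part will be the bookkeeping in this last step: making sure the kernel weights $2^{-k\alpha}$, the shell lengths $\sim2^k$, and the cylinder volumes combine to leave precisely the series in (iii), with no residual dependence on $R$ or $t_0$; verifying the containments $(\text{shell})\times B_R\subset Q^k_R(t_0,x_0)$; and confirming that the $w$-contribution of every shell is dominated, uniformly in $k$, by the single average $\big(|g|^{p_0}\big)_{Q_{2R}}$ so that the geometric sum converges. A secondary technical point is to check that the zero-extensions of $u$ and $w$ belong to the appropriate $\cH^{\alpha,1}_{p_0,0}$ spaces with the stated origins, so that $v$ is an admissible input for Lemma~\ref{lem0811_1} and the embedding theorems apply on the bounded cylinder with vanishing initial data; this follows from the basic properties of $\cH^{\alpha,1}_{p_0,0}$ recalled in \cite{MR4030286}.
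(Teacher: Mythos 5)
Your proposal matches the paper's own proof, which simply refers to Proposition~5.1 of \cite{MR4030286} (with $Du$ there replaced by $D_{x'}u$) and highlights exactly the ingredients you use: the decomposition $u=w+v$ with $w$ solving an equation with compactly supported right-hand side via Theorem~\ref{thm0808_01}, the localization Lemma~\ref{lem0811_1} applied to $v$ with a temporal cutoff $\eta$, and the local embeddings Corollaries~\ref{cor1029_1} and~\ref{cor0811_1} to upgrade $D_{x'}(\eta v)$ from $L_{p_0}$ to $L_{p_1}$. The dyadic bookkeeping for $\cG_\ell$, which you correctly identify as the delicate part, produces precisely the series $\sum_k 2^{-k\alpha}(|D_{x'}u|^{p_0})^{1/p_0}_{Q_R^k}$ appearing in (iii), as in estimate (5.7) of \cite{MR4030286}.
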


\begin{proof}
The proof is almost the same as that of Proposition 5.1 in \cite{MR4030286}.
In particular, we use Lemma \ref{lem0811_1} and Corollaries \ref{cor1029_1} and \ref{cor0811_1} to establish the desired estimate for $D_{x'}v$.
See the corresponding estimate (5.7) in \cite{MR4030286}.
\end{proof}

Let $\gamma\in (0,1)$.
Also let $p_0 \in (1,\infty)$ and $p_1 \in (p_0,\infty]$ be from Proposition \ref{prop0811_1}.
Denote
\begin{equation*}
\cA(\textsf{s}) = \left\{ (t,x) \in (-\infty,T) \times \bR^d: |D_{x'} u(t,x)| > \textsf{s} \right\}
\end{equation*}
and
\begin{multline*}
\cB(\textsf{s}) = \big\{ (t,x) \in (-\infty,T) \times \bR^d:
\\
\gamma^{-1/p_0}\left( \cM |g|^{p_0} (t,x) \right)^{1/p_0} + \gamma^{-1/p_1}\left(\cS\cM |D_{x'} u|^{p_0}(t,x)\right)^{1/p_0} > \textsf{s}  \big\},
\end{multline*}
where we extend involved functions to be zero for $t \leq S$ if they are defined on $(S,T) \times \bR^d$.
Set
\begin{equation*}
\cC_R(t,x) = (t-R^{2/\alpha},t+R^{2/\alpha}) \times B_R(x), \quad \widehat{\cC}_R(t,x) = \cC_R(t,x) \cap \{t \leq T\}.
\end{equation*}

Since Proposition \ref{prop0811_1} is now available, by following the proof of Lemma 5.2 in \cite{MR4030286}, we obtain the following lemma.
Note that in the definition of $\cA(\mathsf{s})$ we have $D_{x'}u$, not $Du$.

\begin{lemma}
							\label{lem0812_1}
Let $\alpha \in (0,1)$, $p_0 \in (1,\infty)$, $T \in (0,\infty)$, and $a^{ij}$ satisfy Assumption \ref{assum0807_01}.
If Theorem \ref{thm0808_01} holds with this $p_0$ and $u \in \cH_{p_0,0}^{\alpha,1}(\bR^d_T)$ satisfies
$$
-\partial_t^\alpha u + D_i\left(a^{ij} D_j u \right) = D_i g_i
$$
in $\bR^d_T$, where $g_i \in L_{p_0}(\bR^d)$, then there exists a constant $\kappa = \kappa(d,\delta,\alpha,p_0) > 1$ such that the following holds: for $(t_0,x_0) \in (-\infty,T] \times \bR^d$ and $\mathsf{s} > 0$, if
$$
\left| \cC_{R/4}(t_0,x_0) \cap \cA(\kappa \mathsf{s})\right| \geq \gamma \left| \cC_{R/4}(t_0,x_0)\right|,
$$
then
$$
\widehat{\cC}_{R/4}(t_0,x_0) \subset \cB(\mathsf{s}).
$$
\end{lemma}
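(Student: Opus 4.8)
The plan is to argue by contradiction, following the proof of Lemma 5.2 in \cite{MR4030286}. Fix $(t_0,x_0)$ and $\mathsf{s} > 0$, assume the density hypothesis $|\cC_{R/4}(t_0,x_0) \cap \cA(\kappa\mathsf{s})| \ge \gamma|\cC_{R/4}(t_0,x_0)|$, and suppose toward a contradiction that there is a point $(\hat t,\hat x) \in \widehat{\cC}_{R/4}(t_0,x_0)$ with $(\hat t,\hat x) \notin \cB(\mathsf{s})$; since the two summands defining $\cB$ are nonnegative, this yields
$$
\big(\cM|g|^{p_0}(\hat t,\hat x)\big)^{1/p_0} \le \gamma^{1/p_0}\mathsf{s}, \qquad \big(\cS\cM|D_{x'}u|^{p_0}(\hat t,\hat x)\big)^{1/p_0} \le \gamma^{1/p_1}\mathsf{s}.
$$
We may assume the time interval of $\cC_{R/4}(t_0,x_0)$ meets $(0,\infty)$ (otherwise $D_{x'}u \equiv 0$ on it and the hypothesis fails), so that $t_* := \min\{t_0 + (R/4)^{2/\alpha},\, T\} \in (0,T]$. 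Put $\rho := 2^{\alpha/2}R/2$; then $\widehat{\cC}_{R/4}(t_0,x_0) \subset Q_{\rho/2}(t_*,x_0)$, and applying Proposition \ref{prop0811_1} at $(t_*,x_0)$ with radius $\rho$ produces the splitting $u = w + v$ on $Q_\rho(t_*,x_0)$ together with the estimates (ii) and (iii).

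The next step is to insert the maximal-function bounds at $(\hat t,\hat x)$ into the right-hand sides of (ii) and (iii). Since $(\hat t,\hat x) \in \widehat{\cC}_{R/4}(t_0,x_0)$ lies at or below the time level $t_*$ and within spatial distance $R/4$ of $x_0$, the box $Q_{2\rho}(t_*,x_0)$ is contained in a parabolic cylinder of comparable measure that contains $(\hat t,\hat x)$, and each $Q^k_\rho(t_*,x_0)$ is contained in a space-time rectangle $Q_{r_1,r_2}$ of comparable measure that contains $(\hat t,\hat x)$; hence $\big(|g|^{p_0}\big)^{1/p_0}_{Q_{2\rho}(t_*,x_0)} \le N\big(\cM|g|^{p_0}(\hat t,\hat x)\big)^{1/p_0}$ and $\big(|D_{x'}u|^{p_0}\big)^{1/p_0}_{Q^k_\rho(t_*,x_0)} \le N\big(\cS\cM|D_{x'}u|^{p_0}(\hat t,\hat x)\big)^{1/p_0}$ with $N$ independent of $k$. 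Plugging these into (ii) and (iii), using $\sum_{k\ge0}2^{-k\alpha} = (1-2^{-\alpha})^{-1} < \infty$ (this is where the decay factor in (iii) is used) together with $\gamma\le1$ and $p_0 \le p_1$, we obtain
$$
\big(|Dw|^{p_0}\big)^{1/p_0}_{Q_\rho(t_*,x_0)} \le N\gamma^{1/p_1}\mathsf{s}, \qquad \big(|D_{x'}v|^{p_1}\big)^{1/p_1}_{Q_{\rho/2}(t_*,x_0)} \le N\gamma^{1/p_1}\mathsf{s},
$$
with $N = N(d,\delta,\alpha,p_0)$; when $p_1 = \infty$ the second bound reads $\|D_{x'}v\|_{L_\infty(Q_{\rho/2}(t_*,x_0))} \le N\mathsf{s}$, since there $\gamma^{-1/p_1} = 1$.

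To finish, note that $\cA(\kappa\mathsf{s}) \subset \{t < T\}$, so $\cC_{R/4}(t_0,x_0) \cap \cA(\kappa\mathsf{s}) \subset \widehat{\cC}_{R/4}(t_0,x_0) \subset Q_{\rho/2}(t_*,x_0)$, and on this set $|D_{x'}u| > \kappa\mathsf{s}$, whence $|Dw| > \kappa\mathsf{s}/2$ or $|D_{x'}v| > \kappa\mathsf{s}/2$ at each of its points. Chebyshev's inequality, together with the bounds above, gives $\big|\{|Dw| > \kappa\mathsf{s}/2\} \cap Q_{\rho/2}(t_*,x_0)\big| \le N(2/\kappa)^{p_0}\gamma\,|Q_\rho(t_*,x_0)|$, and similarly $\big|\{|D_{x'}v| > \kappa\mathsf{s}/2\} \cap Q_{\rho/2}(t_*,x_0)\big| \le N(2/\kappa)^{p_1}\gamma\,|Q_{\rho/2}(t_*,x_0)|$ when $p_1 < \infty$, while for $p_1 = \infty$ this set is empty once $\kappa > 2N$. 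Since $|Q_\rho(t_*,x_0)| \sim |Q_{\rho/2}(t_*,x_0)| \sim |\cC_{R/4}(t_0,x_0)|$ with constants depending only on $d,\alpha$, and both $N$ and $p_1$ depend only on $d,\delta,\alpha,p_0$, we may fix $\kappa = \kappa(d,\delta,\alpha,p_0) > 1$ so large that the sum of these two measures is strictly smaller than $\gamma|\cC_{R/4}(t_0,x_0)|$, contradicting the density hypothesis. Hence $\widehat{\cC}_{R/4}(t_0,x_0) \subset \cB(\mathsf{s})$.

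The main obstacle I anticipate is purely the geometric bookkeeping of the second paragraph: arranging $t_*$ and $\rho$ so that the \emph{two-sided} cylinder $\cC_{R/4}(t_0,x_0)$, truncated at $\{t\le T\}$, fits inside the \emph{one-sided} $Q_{\rho/2}(t_*,x_0)$ while $Q_{2\rho}(t_*,x_0)$ and every $Q^k_\rho(t_*,x_0)$ stay comparable in measure to, and are covered by, parabolic cylinders and space-time rectangles anchored at $(\hat t,\hat x)$ with constants uniform in $k$; and then checking that the $\kappa$ emerging from the Chebyshev step depends only on $d,\delta,\alpha,p_0$, for which one uses that $p_1$ is itself a function of $d,\alpha,p_0$ by Proposition \ref{prop0811_1}. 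No estimate beyond Proposition \ref{prop0811_1} and elementary maximal-function and Chebyshev arguments is required.
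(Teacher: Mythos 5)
Your proposal reconstructs exactly the argument the paper points to (the proof of Lemma~5.2 in \cite{MR4030286}): proceed by contradiction, fit the truncated two-sided cylinder inside a one-sided parabolic cylinder at time level $t_*$, invoke Proposition~\ref{prop0811_1}, convert the averages over $Q_{2\rho}$ and $Q^k_\rho$ into values of $\cM$ and $\cS\cM$ at the exceptional point $(\hat t,\hat x)$, and close with Chebyshev and a large $\kappa$. The choices $t_* = \min\{t_0 + (R/4)^{2/\alpha},T\}$ and $\rho = 2^{\alpha/2}R/2$ do make the inclusions work out, and you correctly note that $\cA(\kappa\mathsf{s}) \subset \{t<T\}$ so that $\cC_{R/4}\cap\cA(\kappa\mathsf{s}) \subset \widehat\cC_{R/4} \subset Q_{\rho/2}(t_*,x_0)$, and that the case where $\cC_{R/4}$ does not meet $(0,\infty)$ is vacuous. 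This is the same approach the paper intends.

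One place where the write-up is internally inconsistent, though not fatally so: after plugging the maximal-function bounds into (ii) you record
$\big(|Dw|^{p_0}\big)^{1/p_0}_{Q_\rho(t_*,x_0)} \le N\gamma^{1/p_1}\mathsf{s}$,
having deliberately weakened $\gamma^{1/p_0}$ to $\gamma^{1/p_1}$ via $\gamma\le1$, $p_0\le p_1$. But if you then apply Chebyshev at level $p_0$ to \emph{this} inequality you only get
$\big|\{|Dw|>\kappa\mathsf{s}/2\}\cap Q_{\rho/2}\big| \le N(2/\kappa)^{p_0}\gamma^{p_0/p_1}|Q_\rho|$,
and since $p_0/p_1<1$ and $\gamma$ is allowed to be arbitrarily small, $\gamma^{p_0/p_1}$ cannot be dominated by a fixed multiple of $\gamma$; no $\kappa=\kappa(d,\delta,\alpha,p_0)$ independent of $\gamma$ would close the argument. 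The Chebyshev bound you actually wrote down, $N(2/\kappa)^{p_0}\gamma|Q_\rho|$, is the correct one and does give the desired contradiction, but it requires retaining the sharper bound
$\big(|Dw|^{p_0}\big)^{1/p_0}_{Q_\rho(t_*,x_0)} \le N\gamma^{1/p_0}\mathsf{s}$
from the $\cM|g|^{p_0}$ inequality (the weakening to $\gamma^{1/p_1}$ is only appropriate for the $v$-term, where Chebyshev is applied at level $p_1$). Keep the two exponents distinct and the derivation is clean.
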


\section{Proofs of Theorems \ref{thm0808_01} and \ref{thm0808_02}}
\label{sec5}

In this section we present the proofs of Theorems \ref{thm0808_01} and \ref{thm0808_02}.

\begin{proof}[Proof of Theorem \ref{thm0808_01}]
As noted in Remark \ref{rem0811_1}, the theorem holds when $p=2$.
Suppose that the theorem holds for some $p_0 \in [2,\infty)$, which is indeed true for $p_0=2$.
Then, we fix $p_1 \in (p_0,\infty]$ from Proposition \ref{prop0811_1} and prove Theorem \ref{thm0808_01} for $p \in (p_0,p_1)$.
We first assume that $\lambda = 0$ and $f = 0$.
In this case by Lemma \ref{lem0812_1} and following the proof of Theorem 2.1 in \cite{MR4030286}, we obtain
$$
\|D_{x'}u\|_{L_p(\bR^d)} \leq N \|g\|_{L_p(\bR^d)},
$$
where $N = N(d,\delta,\alpha,p)$.
This estimate combined with Lemma \ref{lem0812_2} proves \eqref{eq0811_02} with $f=0$.
Then by using S. Agmon's idea (see \cite[Lemma 5.5]{MR2304157}), we see that, for $u \in \cH_{p,0}^{\alpha,1}(\bR^d_T)$ and $\lambda > 0$ satisfying \eqref{eq0814_06} in $\bR^d_T$, the estimate \eqref{eq0114_02} holds.
To prove \eqref{eq0811_02} with non-zero $f$ and $\lambda = 0$, we write the equation \eqref{eq0814_06} as
$$
-\partial_t^\alpha u + D_i\left(a^{ij}D_ju\right)- \lambda u = D_i g_i + f - \lambda u, \quad \lambda > 0,
$$
and apply the estimate \eqref{eq0114_02} to this equation to get
$$
\|Du\|_{L_p(\bR^d_T)} +\sqrt{\lambda}\|u\|_{L_p(\bR^d_T)} \leq N \|g\|_{L_p(\bR^d_T)} + \frac{N}{\sqrt{\lambda}} \|f\|_{L_p(\bR^d_T)} + N\sqrt{\lambda}\|u\|_{L_p(\bR^d_T)},
$$
where $N=N(d,\delta,\alpha,p)$.
By combining this estimate with Lemma 4.1 in \cite{MR4030286}, which is an estimate for the last term in the above inequality and does not require any regularity on the coefficients $a^{ij}$, we obtain
\begin{multline*}
\|Du\|_{L_p(\bR^d_T)} +\sqrt{\lambda}\|u\|_{L_p(\bR^d_T)} \leq N \|g\|_{L_p(\bR^d_T)} + \frac{N}{\sqrt{\lambda}} \|f\|_{L_p(\bR^d_T)}
\\
+ N\sqrt{\lambda}\left( T^{\alpha/2} \|g\|_{L_p(\bR^d_T)} + T^\alpha \|f\|_{L_p(\bR^d_T)} \right),
\end{multline*}
where $N = N(d,\delta,\alpha,p)$.
By choosing $\lambda = T^{-\alpha}$, we arrive at \eqref{eq0811_02} for non-zero $f$ and $\lambda = 0$.
To prove \eqref{eq0811_02} for $\lambda > 0$, we see that the estimates follow easily from \eqref{eq0114_02} when $\lambda \geq 1/(2N_0T^\alpha)$.
If $0 < \lambda < 1/(2N_0T^\alpha)$, we move $\lambda u$ to the right-hand side and use \eqref{eq0811_02} for $\lambda = 0$.
Because of the range of $\lambda$, the term involving $\|u\|_{L_p(\bR^d_T)}$ is absorbed to the left-hand side.

The existence assertion for $p \in (p_0,p_1)$ follows from the a priori estimates just proved above (also see Remark \ref{rem0115_1}) combined with the method of continuity and the existence result for equations having simple coefficients, for instance, in \cite[Theorem 2.1]{MR4030286}.

Now that we have proved Theorem \ref{thm0808_01} for $p \in [p_0,p_1)$, we repeat this procedure until we have $p_1 = \infty$ as in the proof of \cite[Theorem 2.1]{MR4030286} so that the theorem holds for all $p \in [2,\infty)$.

For $p \in (1,2)$, we use the duality argument as in the proof of \cite[Theorem 2.1]{MR4030286}.
The theorem is proved.
\end{proof}

We now prove Theorem \ref{thm0808_02} by following the proofs in \cite{MR2833589}, where the estimates for the non-divergence case are derived from those for the divergence case.

\begin{proof}[Proof of Theorem \ref{thm0808_02}]
We first prove the a priori estimate \eqref{eq0814_01} when $\lambda = 0$.
By Lemma \ref{lem0711_1}, we know that $D_1u \in \cH_{p,0}^{\alpha,1}(\bR^d_T)$.
Then one can check that $w := D_1u \in \cH_{p,0}^{\alpha,1}(\bR^d_T)$ satisfies the divergence type equation
$$
- \partial_t^\alpha  w + D_1\left(a^{11} D_1 w \right) + \Delta_{x'}w = D_1 \bigg(f - \sum_{(i,j)\neq (1,1)}^d a^{ij}D_{ij} u \bigg) + \sum_{i=2}^d D_i \left( D_i D_1 u \right)
$$
in $\bR^d_T$, where $\Delta_{x'} = \sum_{i=2}^d D_i^2$.
Since the coefficient matrix $\operatorname{diag}(a^{11},1,\ldots,1)$ of the above equation satisfies Assumption \ref{assum0808_2} with the same ellipticity constant $\delta$, by Theorem \ref{thm0808_01},  it follows that
\begin{equation}
							\label{eq0814_02}
\|DD_1u\|_{L_p(\bR^d_T)} = \|D w \|_{L_p(\bR^d_T)} \leq N \|f\|_{L_p(\bR^d_T)} + N \| DD_{x'}u \|_{L_p(\bR^d_T)},
\end{equation}
where $N = N(d,\delta,\alpha,p)$ and $DD_{x'}u = D_{ij}u$ with $(i,j) \neq (1,1)$.
To complete the proof, we prove
\begin{equation}
							\label{eq0814_03}
\|DD_{x'}u\|_{L_p(\bR^d_T)} \leq N \|f\|_{L_p(\bR^d_T)}.
\end{equation}
This combined with \eqref{eq0814_02} and the equation $\partial_t^\alpha u = a^{ij} D_{ij} u - f$ proves the estimate \eqref{eq0814_01}.
To prove \eqref{eq0814_03} we consider two cases depending on whether $a^{11} = a^{11}(t)$ or $a^{11} = a^{11}(x_1)$.

\noindent
{\bf Case 1}: $a^{11} = a^{11}(t)$.
In this case, the assumption $a^{11}=a^{11}(t)$ allows to write $a^{11}(t)D_1^2 u = D_1 \left(a^{11}(t) D_1 u \right)$, so that
the equation \eqref{eq0814_04} can be written as the following divergence type equation.
$$
- \partial_t^\alpha u + D_i \left( \tilde{a}^{ij} D_j u \right) = f,
$$
where
\begin{equation*}
\begin{aligned}
&\tilde{a}^{11} = a^{11}, \quad \tilde{a}^{ij} = a^{ij}, \quad i,j=2,\ldots,d,
\\
&\tilde{a}^{1j} = 0, \quad j = 2, \ldots, d, \quad \tilde{a}^{i1} = a^{1i} + a^{i1}, \quad i = 2,\ldots,d.
\end{aligned}
\end{equation*}
Note that the coefficients $\tilde{a}^{ij}$ satisfy Assumption \ref{assum0808_2} with the same ellipticity constant.
We then see that $w_\ell := D_\ell u$, $\ell = 2,\ldots,d$, belongs to $\cH_{p,0}^{\alpha,1}(\bR^d_T)$ and satisfies
$$
-\partial_t^\alpha w_\ell + D_i \left( \tilde{a}^{ij} D_j w_\ell \right) = D_\ell f
$$
in $\bR^d_T$.
By Theorem \ref{thm0808_01} again, where the symmetry of the coefficients, i.e., $\tilde{a}^{ij} = \tilde{a}^{ji}$, is not required, we obtain
$$
\|D D_{x'}u\|_{L_p(\bR^d_T)} = \|D w_\ell\|_{L_p(\bR^d_T)} \leq N \|f\|_{L_p(\bR^d_T)},
$$
where $N = N(d,\delta,\alpha,p)$, which proves \eqref{eq0814_03}.
Therefore, the estimate \eqref{eq0814_01} with $\lambda = 0$ is proved for the case $a^{11}=a^{11}(t)$.

\noindent
{\bf Case 2}: $a^{11} = a^{11}(x_1)$.
In this case, we make use of the following change of variables:
\begin{equation*}
y_1 = \chi(x_1) = \int_0^{x_1} \frac{1}{a^{11}(r)}\,dr, \quad y_i = x_i, \quad i = 2,\ldots,d.
\end{equation*}
As seen in the proof of Theorem 4.1 in \cite{MR2833589}, $v(t,y_1,y') = u(t,\chi^{-1}(y_1),y')$ satisfies
$$
- \partial_t^\alpha v + D_i \left( \hat{a}^{ij} D_j v \right) = \hat{f}
$$
in $\bR^d_T$, where
\begin{equation*}
\left\{
\begin{aligned}
&\hat{a}^{11}(y_1) = \frac{1}{a^{11}(\chi^{-1}(y_1))}, \quad \hat{a}^{1j} = 0, & j=2,\ldots,d,
\\
&\hat{a}^{i1}(t,y_1) = \hat{a}^{11}(y_1) \left(a^{1i}(t, \chi^{-1}(y_1)) + a^{i1}(t,\chi^{-1}(y_1))\right), & i=2,\ldots,d,
\\
&\hat{a}^{ij}(t,y_1) = a^{ij}(t,\chi^{-1}(y_1)), & i,j=2,\ldots,d,
\end{aligned}
\right.
\end{equation*}
$$
\hat{f}(t,y_1,y') = f(t,\chi^{-1}(y_1),y').
$$
Then, $D_\ell v$, $\ell = 2,\ldots,d$, belongs to $\cH_{p,0}^{\alpha,1}(\bR^d_T)$ and satisfies
\begin{equation}
							\label{eq0814_05}
-\partial_t^\alpha (D_\ell v) + D_i \left( \hat{a}^{ij} D_j (D_\ell v) \right) = D_\ell \hat{f}
\end{equation}
in $\bR^d_T$.
Note that $\hat{a}^{ij}$ satisfy Assumption \ref{assum0808_2} with an ellipticity constant depending only on $\delta$.
Indeed,
\begin{align*}
\hat{a}^{ij} \xi_i \xi_j &= \frac{1}{a^{11}} \left( \xi_1^2 + \sum_{j=2}^d a^{1j} \xi_i \xi_j + \sum_{i=2}^d a^{i1}\xi_i\xi_1 \right) + \sum_{i,j=2}^d a^{ij}\xi_i\xi_j\\
&= a^{ij} \tilde{\xi}_i\tilde{\xi}_j \geq \delta|\tilde{\xi}|^2 \geq \delta^3|\xi|^2, \quad \tilde{\xi} = \left( \frac{\xi_1}{a^{11}}, \xi_2, \ldots, \xi_d \right).
\end{align*}
Thus, Theorem \ref{thm0808_01} applied to \eqref{eq0814_05} shows
$$
\|D D_\ell v\|_{L_p(\bR^d_T)} \leq N \|\hat{f}\|_{L_p(\bR^d_T)},
$$
from which we obtain \eqref{eq0814_03}.
Therefore, the estimate \eqref{eq0814_01} with $\lambda = 0$ is also proved for the case $a^{11}=a^{11}(x_1)$.

Now that we have proved \eqref{eq0814_01} with $\lambda = 0$, using S. Agmon's idea as in the proof of Theorem \ref{thm0808_01}, we obtain the a priori estimate \eqref{eq0814_01} for $\lambda > 0$.
To prove \eqref{eq0115_01}, we write
\[
-\partial_t^\alpha u + a^{ij}D_{ij}u = f + \lambda u.
\]
By following the proof of \cite[Theorem 2.4]{MR3899965}, we have
\[
\|u\|_{L_p(\bR^d_T)} \leq N \|f\|_{L_p(\bR^d_T)} + \lambda N \|u\|_{L_p(\bR^d_T)},
\]
where $N = N(d,\delta,\alpha,p,T)$.
If $\lambda N \leq 1/2$, it follows that
\[
\|u\|_{L_p(\bR^d_T)} \leq 2N \|f\|_{L_p(\bR^d_T)}.
\]
From this estimate with \eqref{eq0814_01} as well as an interpolation inequality with respect to $x$ we arrive at \eqref{eq0115_01}.
If $\lambda \geq 1/(2N)$, the estimate \eqref{eq0115_01} follows directly from \eqref{eq0814_01}.

Finally, the existence assertion follows from the a priori estimates proved above combined with the method continuity and the solvability for equations with simple coefficients, for instance, in \cite{MR3899965}.
\end{proof}

\section{Boundary value problems}
\label{sec6}

We denote $\bR^d_+ = \{x = (x_1,x') \in \bR^d: x_1 > 0\}$.

\begin{theorem}[Half space]
							\label{thm0112_1}
The assertions in Theorems \ref{thm0808_01} and \ref{thm0808_02} hold if $\bR^d$ is replaced with $\bR^d_+$ and $u$ satisfies either the Dirichlet boundary condition (1) or the Neumann type boundary condition (2) below.

\begin{enumerate}
\item The Dirichlet boundary condition
\begin{equation}
							\label{eq0114_01}
u(t,0,x') = 0 \,\, \text{on} \,\,(0,T) \times \bR^{d-1}.
\end{equation}

\item
\begin{enumerate}
\item The divergence case: the conormal derivative boundary condition
\[
a^{1j}D_ju = g_1\,\, \text{on} \,\, (0,T) \times \bR^{d-1}.
\]

\item The non-divergence case: the Neumann boundary condition
\[
D_1u(t,0,x') = 0\,\, \text{on} \,\,(0,T) \times \bR^{d-1}.
\]
\end{enumerate}
\end{enumerate}
\end{theorem}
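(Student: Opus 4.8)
The strategy is to reduce each boundary value problem on the half space to the corresponding Cauchy problem on $\bR^d_T=(0,T)\times\bR^d$ already settled in Theorems \ref{thm0808_01} and \ref{thm0808_02}, by means of even or odd reflection across $\{x_1=0\}$. Given $u$ on $(0,T)\times\bR^d_+$, set $\bar u(t,x_1,x')=\varepsilon\, u(t,-x_1,x')$ for $x_1<0$, where $\varepsilon=-1$ in the Dirichlet case \eqref{eq0114_01} (odd reflection) and $\varepsilon=+1$ in the Neumann/conormal case (even reflection). Reflect the coefficients by $\bar a^{ij}(t,x_1)=\epsilon_{ij}\,a^{ij}(t,-x_1)$ for $x_1<0$, with $\epsilon_{ij}=1$ when $i=j=1$ or $i,j\ge 2$ and $\epsilon_{ij}=-1$ when exactly one of $i,j$ equals $1$; when $a^{11}=a^{11}(x_1)$ this keeps $\bar a^{11}$ a function of $x_1$ alone, so Assumption \ref{assum0808_2} is preserved, and the substitution $\eta=(-\xi_1,\xi_2,\dots,\xi_d)$ gives $\bar a^{ij}(t,x_1)\xi_i\xi_j=a^{ij}(t,-x_1)\eta_i\eta_j\ge\delta|\xi|^2$, so ellipticity and boundedness hold with the same $\delta$. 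The data are reflected so that the right-hand side (in divergence or non-divergence form) acquires the same parity in $x_1$ as $-\partial_t^\alpha\bar u$: in the divergence case $\bar g_1$ gets parity $\varepsilon$ while $\bar g_i$ ($i\ge 2$) and $\bar f$ get parity $-\varepsilon$; in the non-divergence case $\bar f$ gets parity $\varepsilon$.

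Next I would check that $\bar u$ lies in the whole-space solution space and solves the reflected equation on $\bR^d_T$ with zero initial data. Since $D_x u\in L_p$, for a.e.\ $t$ the slice $u(t,\cdot)$ has a trace on $\{x_1=0\}$; the distributional $x_1$-derivative of an even reflection equals the odd reflection of $D_1u$ with no singular part, whereas that of an odd reflection equals the even reflection of $D_1u$ minus $2(\operatorname{tr}u)\,\delta_{\{x_1=0\}}$, so the Dirichlet condition $u(t,0,x')=0$ is precisely what removes the singular term and places $\bar u$ in $\cH_{p,0}^{\alpha,1}(\bR^d_T)$. In the non-divergence case one applies the same computation once more to $D_1u$: the Neumann condition $D_1u(t,0,x')=0$ (even reflection) removes the would-be singular part of $D_{11}\bar u$, while in the Dirichlet case (odd reflection) the conclusion is automatic once $u(t,0,x')=0$, so $\bar u\in\bH_{p,0}^{\alpha,2}(\bR^d_T)$. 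The reflected equation then holds a.e.\ on each half-space and hence a.e.\ on $\bR^d_T$; in the divergence case one instead verifies the weak formulation \eqref{eq0720_01} with test functions not required to vanish at $x_1=0$, by splitting an arbitrary test function on $\bR^d$ into its even and odd $x_1$-parts and reducing each to the half-space identity, where the interface term $\int_{\{x_1=0\}}(a^{1j}D_ju-g_1)\varphi\,dx'$ either cancels (Dirichlet) or is exactly what the conormal condition imposes. The time-fractional operators $I_S^{1-\alpha}$ and $\partial_t^\alpha$ play no role here, since the reflection acts only in $x_1$ and commutes with them.

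With $\bar u$ solving the whole-space equation, Theorems \ref{thm0808_01} and \ref{thm0808_02} furnish all the asserted a priori bounds for $\bar u$ on $\bR^d_T$; restricting to $(0,T)\times\bR^d_+$ and using $\|\cdot\|_{L_p((0,T)\times\bR^d_+)}\le\|\cdot\|_{L_p(\bR^d_T)}$ yields the bounds for $u$. For existence, given the data on $(0,T)\times\bR^d_+$ I would reflect them by the recipe above, solve the resulting whole-space equation by Theorem \ref{thm0808_01} or \ref{thm0808_02} to get $\bar u\in\cH_{p,0}^{\alpha,1}(\bR^d_T)$ (resp.\ $\bH_{p,0}^{\alpha,2}(\bR^d_T)$), and then observe that $(t,x_1,x')\mapsto\varepsilon\,\bar u(t,-x_1,x')$ solves the \emph{same} whole-space equation, because the reflected coefficients and data have the matching parity; uniqueness in Theorems \ref{thm0808_01}--\ref{thm0808_02} forces this function to equal $\bar u$, so $\bar u$ has parity $\varepsilon$ in $x_1$, and its restriction $u$ therefore lies in the appropriate half-space space and satisfies the prescribed boundary condition (an odd function vanishes on $\{x_1=0\}$; the normal derivative of an even function vanishes there; the conormal condition is encoded in the weak formulation).

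The step I expect to be the main obstacle is the second one: making rigorous that the reflection of a function of class $\cH_{p,0}^{\alpha,1}$ or $\bH_{p,0}^{\alpha,2}$ on the half space again lies in that class on $\bR^d$ and that the reflected equation genuinely holds across $\{x_1=0\}$ with no spurious interface mass. This combines the existence and vanishing of the relevant spatial traces — for which one may invoke the embeddings of Section~\ref{sec6} type, namely Theorems \ref{thm0717_1} and \ref{thm0811_01}, when $p$ is small — with a careful bookkeeping of parities in the weak formulation (divergence case) and a localization near the hyperplane (non-divergence case); once this is in place the remainder is routine. A minor additional point is that, when approximating $u$ by smooth compactly supported functions to confirm membership in the subscript-$0$ spaces, one should take approximants vanishing near $\{x_1=0\}$ (Dirichlet) or mollify the reflection in $x_1$ (Neumann/conormal) so that the reflected approximants are admissible.
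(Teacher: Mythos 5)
Your proposal takes the same route as the paper's proof, which is very brief and defers the details to \cite{MR2764911} and \cite{MR2300337}: odd/even reflection of the solution, coefficients, and data across $\{x_1=0\}$, then invoking Theorems \ref{thm0808_01} and \ref{thm0808_02} on the whole space. The points you fill in — that the coefficient reflection preserves Assumption \ref{assum0808_2} and the ellipticity constant, the trace/singular-part analysis placing the extension in $\cH_{p,0}^{\alpha,1}(\bR^d_T)$ or $\bH_{p,0}^{\alpha,2}(\bR^d_T)$, and the parity-plus-uniqueness argument for existence — are exactly the right ones and are spelled out more explicitly than in the paper.

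One genuine bookkeeping error: the parities you assign the reflected data in the divergence case are all reversed. Tracking parities in the Dirichlet (odd, $\varepsilon=-1$) case: $\bar u$ and $\partial_t^\alpha\bar u$ are odd, $D_1\bar u$ is even, $D_j\bar u$ ($j\ge 2$) is odd, so with your coefficient reflection $\bar a^{1j}D_j\bar u$ is even and $\bar a^{ij}D_j\bar u$ ($i\ge 2$) is odd, giving $D_i(\bar a^{ij}D_j\bar u)$ odd as required. Matching the right-hand side then forces $\bar g_1$ even (parity $-\varepsilon$) and $\bar g_i$ ($i\ge 2$), $\bar f$ odd (parity $\varepsilon$) — the opposite of what you wrote, and analogously for the conormal case. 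This slip is harmless for the a priori bounds, which see only $\|\bar g\|_{L_p}$ and $\|\bar f\|_{L_p}$, but your existence argument hinges on the reflected equation being invariant under $\bar u \mapsto \varepsilon\bar u(t,-x_1,x')$, which requires the correct parities; that step fails as written until the signs are flipped. Two cosmetic points: the singular part of the distributional $D_1$ of an odd reflection is $+2(\operatorname{tr} u)\,\delta_{\{x_1=0\}}$, not minus, and the embedding theorems you cite live in the paper's final section, not Section~\ref{sec6} — neither of which affects the argument.
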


\begin{proof}
We use even or odd extensions with respect to $x_1$.
See, for instance, the proofs of \cite[Theorem 2.4]{MR2764911} and \cite[Theorem 2.7]{MR2300337}.
In particular, if $u \in \cH_{p,0}^{\alpha,1}(\Omega_T)$ or $u \in \bH_{p,0}^{\alpha,2}(\Omega_T)$, where $\Omega = \bR^d_+$, with the boundary condition \eqref{eq0114_01}, we see that $\bar{u}$, the odd extension of $u$ with respect to $x_1$ given by
\[
\bar{u} =
\left\{\begin{aligned}
u(t,x_1,x') \quad &\text{for} \quad x_1 > 0,
\\
-u(t,-x_1,x') \quad &\text{for} \quad x_1 < 0,
\end{aligned}
\right.
\]
belongs to $\cH_{p,0}^{\alpha,1}(\bR^d_T)$ or $\bH_{p,0}^{\alpha,2}(\bR^d_T)$, respectively.
We have the same conclusion for the even extension of $u$ with respect to $x_1$ when the conormal derivative boundary condition or the Neumann boundary condition is imposed.
\end{proof}

\begin{proposition}[Partially bounded domain]
							\label{prop0118_1}
Let $\alpha \in (0,1)$, $p \in (1,\infty)$, $T \in (0,\infty)$, $R \in (0,\infty)$, and
\[
\Pi = \{(x_1,x') \in \bR^d: 0<x_1<R, x' \in \bR^{d-1}\}.
\]
Under Assumption \ref{assum0808_2}, for any $u \in \cH_{p,0}^{\alpha,1}(\Pi_T)$ and $\lambda \geq 0$ satisfying \eqref{eq0814_06} in $\Pi_T$, where $g_i, f \in L_p(\Pi_T)$, with the Dirichlet boundary condition
\begin{equation}
							\label{eq0118_02}
u(t,x) = 0 \,\, \text{on} \,\, (0,T) \times \partial \Pi,
\end{equation}
we have the estimates \eqref{eq0114_02} when $\lambda>0$ and \eqref{eq0811_02} when $\lambda\ge 0$, with $\bR^d_T$ replaced by $\Pi_T$, where $N_0$ again depends only on $d$, $\delta$, $\alpha$, and $p$.
Moreover, for any $g_i, f \in L_p(\Pi_T)$ and $\lambda\ge 0$, there exists a unique $u \in \cH_{p,0}^{\alpha,1}(\Pi_T)$ satisfying \eqref{eq0814_06} in $\Pi_T$ with the boundary condition \eqref{eq0118_02}.

Similar assertions hold for the divergence type equation \eqref{eq0814_06} with the conormal derivative boundary condition and the non-divergence type equation \eqref{eq0814_04} with either the Dirichlet boundary condition or the Neumann boundary condition.
\end{proposition}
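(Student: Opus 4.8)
The plan is to deduce the result on the slab $\Pi = (0,R)\times\bR^{d-1}$ from the whole-space Theorem \ref{thm0808_01} by a \emph{periodic} extension in the $x_1$ variable, in the spirit of the odd/even extensions used in Theorem \ref{thm0112_1}. First I would reduce to the Dirichlet case with $\lambda = 0$ and $f=0$; the remaining cases ($\lambda>0$, nonzero $f$, the conormal and Neumann conditions) follow from this one exactly as in the proof of Theorem \ref{thm0808_01}, using S.~Agmon's trick and the rescaling $\lambda u \to$ right-hand side, and switching odd/even according to the boundary condition. So fix $u\in\cH_{p,0}^{\alpha,1}(\Pi_T)$ solving $-\partial_t^\alpha u + D_i(a^{ij}D_ju) = D_ig_i$ in $\Pi_T$ with $u=0$ on $(0,T)\times\partial\Pi$. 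By scaling (using only ellipticity, as in Remark \ref{rem0811_1}) we may take $R=1$. I would first extend $u$ oddly across $x_1=0$ and across $x_1=1$ to a function $\bar u$ on $(0,T)\times\bR$ that is $2$-periodic in $x_1$; since $u$ vanishes on both faces, $\bar u$ is in $\cH_{p,0}^{\alpha,1}$ on every bounded slab and in fact, because the coefficients are \emph{merely measurable} in $x_1$, we are free to extend the coefficients by the matching reflections — $a^{11}$ and $a^{ij}$ with $i,j\ge 2$ are extended evenly, while the mixed entries $a^{1j}, a^{i1}$ ($j,i\ge 2$) are extended oddly — and the reflected coefficients still satisfy the uniform ellipticity condition and Assumption \ref{assum0808_2} (here it is crucial that $a^{11}$ depends only on $t$ or only on $x_1$, so the even reflection keeps it in the allowed class, and that we impose \emph{no} regularity in $x_1$, so the discontinuities introduced at $x_1\in\bZ$ are harmless). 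The data $g_i$ are extended with the same parities so that $\bar u$ solves $-\partial_t^\alpha\bar u + D_i(\bar a^{ij}D_j\bar u) = D_i\bar g_i$ on $(0,T)\times\bR$ in the weak sense; the standard check is that the boundary terms in the weak formulation cancel across each face because $u=0$ and the conormal flux matches under the reflection.

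The issue with this naive extension is that $\bar u$ is periodic, hence not in $L_p((0,T)\times\bR^d)$. The remedy — again as in \cite{MR2764911, MR2300337} — is to localize: fix a smooth cutoff $\zeta(x_1)$ equal to $1$ on $(0,1)$ and supported in $(-1,2)$, and let $w = \zeta\bar u$. Then $w\in\cH_{p,0}^{\alpha,1}(\bR^d_T)$, it agrees with $u$ on $\Pi_T$, and it solves a whole-space divergence equation
\begin{equation*}
-\partial_t^\alpha w + D_i(\bar a^{ij}D_jw) = D_i\tilde g_i + \tilde f
\end{equation*}
where $\tilde g_i, \tilde f$ collect $\zeta\bar g_i$ together with the commutator terms coming from the derivatives of $\zeta$ (terms like $\bar a^{ij}(D_i\zeta)D_j\bar u$, $D_i(\bar a^{ij}(D_j\zeta)\bar u)$, $\bar g_i D_i\zeta$). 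Applying Theorem \ref{thm0808_01} to this equation on $\bR^d_T$ gives
\begin{equation*}
\|Dw\|_{L_p(\bR^d_T)} \le N\|\tilde g\|_{L_p(\bR^d_T)} + NT^{\alpha/2}\|\tilde f\|_{L_p(\bR^d_T)},
\end{equation*}
and restricting to $\Pi_T$ and estimating the commutator norms by $\|\bar u\|_{L_p((0,T)\times(-1,2)\times\bR^{d-1})} + \|\bar g\|_{L_p}$, which by periodicity is $\le N(\|u\|_{L_p(\Pi_T)} + \|g\|_{L_p(\Pi_T)})$, yields a first estimate of the form $\|Du\|_{L_p(\Pi_T)} \le N\|g\|_{L_p(\Pi_T)} + N\|u\|_{L_p(\Pi_T)}$.

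The main obstacle is absorbing that unwanted lower-order term $\|u\|_{L_p(\Pi_T)}$ and getting the clean, $T$-explicit constants of \eqref{eq0811_02}. This is where I would invoke the embedding results for $\cH_{p,0}^{\alpha,1}$ — Theorem \ref{thm0717_1} or Theorem \ref{thm0811_01} — together with a Poincar\'e-type inequality in $x_1$ on the bounded slab: since $u$ vanishes on $\partial\Pi$, one controls $\|u\|_{L_p(\Pi_T)}$ by $\|D_1 u\|_{L_p(\Pi_T)}$ (bounded width $R$) or, in the time-scaled form, by $T^{\alpha/2}$ times the right-hand side via the $\cH_{p,0}^{\alpha,1}$ embedding that gives $\|u\|_{L_p}\le NT^{\alpha/2}(\|Du\|_{L_p}+\|g\|_{L_p}+\cdots)$ — exactly the mechanism sketched for the $L_p$ bound on $u$ in \eqref{eq0811_02}. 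Combining this with the Poincar\'e step lets the $\|u\|_{L_p(\Pi_T)}$ term be absorbed and produces \eqref{eq0811_02} on $\Pi_T$; then, as in the proof of Theorem \ref{thm0808_01}, applying this with $D_ig_i+f-\lambda u$ in place of the right-hand side and using the same Agmon device upgrades it to \eqref{eq0114_02} for $\lambda>0$. Uniqueness follows from the a priori estimate (the homogeneous equation forces $u=0$), and existence comes from the method of continuity, connecting $\bar a^{ij}$ to a constant-coefficient operator on $\Pi_T$ for which solvability with the Dirichlet condition is classical (or, alternatively, transferring the whole-space solvability of Theorem \ref{thm0808_01} through the same periodic-reflection correspondence). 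The conormal and Neumann variants are identical after replacing odd reflections by even ones, and the non-divergence statement \eqref{eq0814_04} on $\Pi_T$ is obtained from the divergence case on $\Pi_T$ by the same differentiation-and-rewriting argument used in the proof of Theorem \ref{thm0808_02}, since all the coefficient manipulations there (including the change of variable $y_1=\chi(x_1)$ when $a^{11}=a^{11}(x_1)$) respect a bounded $x_1$-interval.
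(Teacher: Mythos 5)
Your extension strategy (odd/even reflection across $x_1=0$ and then $2R$-periodic continuation in $x_1$, with matching parities for the coefficients and the data) is exactly the paper's, and your reduction of the conormal/Neumann and non-divergence cases, and of $\lambda>0$ to $\lambda=0$, also parallels the paper. The gap is in the localization step.

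You cut off with a \emph{fixed} function $\zeta(x_1)$ supported near the slab. When you compute the equation for $w=\zeta\bar u$, the commutator contains $(D_1\zeta)\,\bar a^{11}D_1\bar u$. You assert the commutator norms are controlled by $\|\bar u\|_{L_p}+\|\bar g\|_{L_p}$, but that term is genuinely of size $\|D_1\bar u\|_{L_p}$. For the $j\ge 2$ pieces one can rewrite $(D_1\zeta)\bar a^{1j}D_j\bar u = D_j\!\left[(D_1\zeta)\bar a^{1j}\bar u\right]$ and move them into $\tilde g_j$ (possible precisely because $\bar a^{1j}$ does not depend on $x_j$), and when $a^{11}=a^{11}(t)$ one can also write $(D_1\zeta)\bar a^{11}D_1\bar u = D_1\!\left[\bar a^{11}(D_1\zeta)\bar u\right]-\bar a^{11}(D_1^2\zeta)\bar u$. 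But when $a^{11}=a^{11}(x_1)$ this last rewriting is not available because $a^{11}$ is merely measurable; the $(D_1\zeta)\bar a^{11}D_1\bar u$ term then must sit in $\tilde f$, and the resulting estimate reads roughly
\begin{equation*}
\|Du\|_{L_p(\Pi_T)} \le N\|g\|_{L_p} + N\|u\|_{L_p} + N\,T^{\alpha/2}\left(\|f\|_{L_p}+\|Du\|_{L_p(\Pi_T)}\right),
\end{equation*}
which can be closed only for small $T$. Your proposed remedy — Poincar\'e in $x_1$ together with Theorem \ref{thm0717_1}/\ref{thm0811_01} — is aimed at a lower-order $\|u\|_{L_p}$ term, not at a $\|Du\|_{L_p}$ term, so it does not cure this.

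The paper avoids the difficulty entirely by choosing a cutoff $\eta_R(x)=\eta(x/R)$ in \emph{all} spatial variables with $R\to\infty$, so that $|D\eta_R|\le NR^{-1}$ and $|D^2\eta_R|\le NR^{-2}$. Every commutator term then carries a factor $R^{-1}$ or $R^{-2}$, while the periodicity of $\bar u$, $\bar g_i$, $\bar f$ in $x_1$ lets one fold the $L_p$ norms back to $(0,T)\times(0,1)\times B_{2R}'$; sending $R\to\infty$ kills the commutator terms without any absorption, and one directly gets \eqref{eq0114_02} and \eqref{eq0811_02} on $\Pi_T$ with $T$-independent $N_0$. If you want to keep a fixed $x_1$-cutoff you would at the very least need to re-derive a Lemma~\ref{lem0812_2}-type scaling argument to recover $D_1\bar u$, and still make a separate argument when $a^{11}=a^{11}(x_1)$.

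One more point: for existence you invoke the method of continuity reaching back to a ``classical'' constant-coefficient solvability on $\Pi_T$. The paper explicitly notes that even for the model equation $-\partial_t^\alpha u+\Delta u=D_ig_i+f$ on $\Pi_T$ this solvability does not appear to be in the literature, and establishes it by Galerkin in $L_2$, followed by a bootstrap-in-$p$ (via H\"older and the same $\eta_R$ device) for $p<2$, and a duality argument for $p>2$; your alternative of ``transferring whole-space solvability through the reflection'' does not directly apply because the periodically extended data lie outside $L_p(\bR^d_T)$, which is exactly why the $\eta_R$ localization reappears in the existence proof.
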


\begin{proof}
Since the coefficients are measurable functions of $t$, $x_1$, or $(t,x_1)$, and the desired estimates as in \eqref{eq0114_02} and \eqref{eq0814_01} (the constant $N_0$ in \eqref{eq0811_02} as well) are independent of $T$, one can use a scaling argument.
Thus, we may assume $R=1$.

We first prove the a priori estimates.
We only give details for the divergence case with the Dirichlet boundary condition. The other cases are proved similarly.

Let $u \in \cH_{p,0}^{\alpha,1}(\Pi_T)$ satisfy \eqref{eq0814_06} with the Dirichlet boundary condition \eqref{eq0118_02}.
We first take the odd extension of $u$ with respect to $\{x_1=0\}$ and then take the periodic extension of it to the whole space $\bR^d$, so that the extended function, which is still denoted by $u$, is in $\cH_{p,\text{loc}}^{\alpha,1}(\bR^d_T)$. Similarly, we take the odd (and even) extensions of $f$ and $g_i,i=2,\ldots,d$ (and $g_1$) with respect to $\{x_1=0\}$ and then take the periodic extensions of them to the whole space.
For the coefficients $a^{1j}$ and $a^{j1}$, $j=2,\ldots,d$, we take the periodic extensions (again denoted by $a^{1j}$) of the oddly extended ones with respect to $\{x_1=0\}$.
For the remaining coefficients, we take the periodic extensions of the evenly extended ones with respect to $\{x_1=0\}$.
The extended coefficients satisfy Assumption \ref{assum0808_2} as well as the uniform ellipticity condition.
Next let $\eta\in C_0^\infty(B_2)$ be such that $\eta=1$ on $B_1$ and $\eta_R(\cdot)=\eta(\cdot/R)$ for $R\ge 10$. Then it is easily seen that $u\eta_R\in \cH_{p,0}^{\alpha,1}(\bR^d_T)$ satisfies
\begin{equation*}
\begin{aligned}
-\partial_t^\alpha(u\eta_R) + D_i\left(a^{ij} D_j (u\eta_R) \right) - \lambda u\eta_R &= D_i (\eta_R g_i) + \eta_R f
\\
+ D_i\left( a^{ij} (D_j \eta_R) u\right) &+ a^{ij} D_i\eta_R D_j u - (D_i \eta_R) g_i
\end{aligned}
\end{equation*}
in $\bR^d_T$.
We first prove \eqref{eq0114_02} with $\bR^d_T$ replaced with $\Pi_T$.
By applying Theorem \ref{thm0808_01} to the above equation, we get
\begin{align*}
&\|D(\eta_R u)\|_{L_p(\bR^d_T)} + \sqrt{\lambda}\|\eta_R u\|_{L_p(\bR^d_T)} \leq N_0 \||\eta_R g| + |(D\eta_R) u| \|_{L_p(\bR^d_T)}\\
&\qquad + \frac{N_0}{\sqrt{\lambda}}\||\eta_R f| + |D\eta_R Du| + |(D\eta_R)||g|\|_{L_p(\bR^d_T)},
\end{align*}
where $N_0 = N_0(d,\delta,\alpha,p)$. This together with the periodicity of $u$, $f$, and $g_i$ in $x_1$ and the bound $|D\eta_R|\le NR^{-1}$ gives
\begin{align*}
&\||Du|+\sqrt\lambda |u|\|_{L_p((0,T)\times (0,1)\times B_{R/2}')}
\leq N_0 \||g| + R^{-1}|u| \|_{L_p((0,T)\times (0,1)\times B_{2R}')}\\
&\qquad + \frac{N_0}{\sqrt{\lambda}}\||f| + R^{-1}|Du| + R^{-1}|g|\|_{L_p((0,T)\times (0,1)\times B_{2R}')},
\end{align*}
where $B_r' = \{x'\in \bR^{d-1}: |x'|<r\}$.
Taking the limits as $R\to \infty$ gives the desired estimate
\begin{equation*}
\|Du\|_{L_p(\Pi_T)}+ \sqrt{\lambda}\|u\|_{L_p(\Pi_T)} \leq N_0 \|g\|_{L_p(\Pi_T)} + \frac{N_0}{\sqrt{\lambda}}\|f\|_{L_p(\Pi_T)},
\end{equation*}
where $N_0=N_0(d,\delta,\alpha,p)$.
Similarly, we obtain \eqref{eq0811_02} from Theorem \ref{thm0808_01}.

We now prove the existence of  unique solutions to the equations.
Thanks to the a priori estimates just proved above, the uniqueness is guaranteed whenever solutions exist.
Thus, we only prove the existence of solutions, for which by the a priori estimates and the method of continuity, in the divergence case it is enough to show the existence of a solution $u \in \cH_{p,0}^{\alpha,1}(\Pi_T)$ to the equation
\begin{equation}
							\label{eq0120_01}
-\partial_t^\alpha u + \Delta u  = D_ig_i+f
\end{equation}
in $\Pi_T$, where $g_i,f \in L_p(\Pi_T)$, with either the Dirichlet boundary condition or the conormal derivative boundary condition.
Indeed, even if equations have simple coefficients as in \eqref{eq0120_01}, the corresponding solvability result does not seem available in the literature when the spatial domain is $\Pi$.
We give here details for the case with the Dirichlet boundary condition \eqref{eq0118_02}.

By using the density argument and the a priori estimates, without loss of generality we may assume that $g_i$ and $f$ are smooth with compact support in $\Pi_T$.
In the proof below whenever $g_i$ and $f$ are considered in $\bR^d_T$, those are the extended ones as in the proof of the a priori estimates.

{\bf Case 1}: $p=2$. In this case, the solvability follows from the Galerkin method. See \cite{MR2538276}.

{\bf Case 2}: $p\in (1,2)$. From Step 1, we know that \eqref{eq0120_01} has a solution $u\in \cH_{2,0}^{\alpha,1}(\Pi_T)$. This solution is also in $\cH_{p,0}^{\alpha,1}(\Pi_T)$. Indeed, we take the extension of $u$ as in the proof of the a priori estimates. Then it is easily seen that
$u\eta_R\in \cH_{p,0}^{\alpha,1}(\bR^d_T)\cap \cH_{2,0}^{\alpha,1}(\bR^d_T)$ satisfies
\begin{equation*}
\begin{aligned}
-\partial_t^\alpha(u\eta_R) + \Delta(u\eta_R)&= D_i (\eta_R g_i) + \eta_R f
\\
+ 2D_i\left( (D_i \eta_R) u\right) &- (\Delta\eta_R) u - (D_i \eta_R) g_i
\end{aligned}
\end{equation*}
in $\bR^d_T$.
By applying Theorem \ref{thm0808_01} to the above equation we get
\begin{align*}
T^{\alpha/2}\|D(\eta_R u)\|_{L_p(\bR^d_T)} &+ \|\eta_R u\|_{L_p(\bR^d_T)} \leq
N_0 T^{\alpha/2}\||\eta_R g| + |(D\eta_R) u| \|_{L_p(\bR^d_T)}\notag\\
&\quad + N_0T^{\alpha}\||\eta_R f| + |(\Delta\eta_R) u| + |(D\eta_R)||g|\|_{L_p(\bR^d_T)},
\end{align*}
where $N_0 = N_0(d,\delta,\alpha,p)$. This together with the periodicity of $u$, $f$, and $g_i$ in $x_1$ and the bounds $|D\eta_R|\le NR^{-1},|\Delta\eta_R|\le NR^{-2}$ gives
\begin{align*}
&T^{\alpha/2}\|Du\|_{L_p((0,T)\times (0,1)\times B_{R/2}')} + \|u\|_{L_p((0,T)\times (0,1)\times B_{R/2}')}\\
&\leq N_0T^{\alpha/2} \||g| + R^{-1}|u| \|_{L_p((0,T)\times (0,1)\times B_{2R}')}\\
&\quad +N_0T^{\alpha}\||f| + R^{-2}|u| + R^{-1}|g|\|_{L_p((0,T)\times (0,1)\times B_{2R}')}.
\end{align*}
By H\"older's inequality,
\begin{align*}
&R^{-1}\|u\|_{L_p((0,T)\times (0,1)\times B_{2R}')}\\
&\le NT^{1/p - 1/2} R^{-1+(d-1)(1/p-1/2)}\|u\|_{L_2((0,T)\times (0,1)\times B_{2R}')}.
\end{align*}
Combining the above two inequalities and sending $R\to \infty$, we see that $u,Du\in L_p(\Pi_T)$ provided that $1/p-1/2<1/(d-1)$.
In the general case, we take a sequence of decreasing exponents $\{p_i\}_{i=0}^m\subset (1,2)$ such that $p_0=2$, $p_m=p$, and $1/p_{i+1}-1/p_{i}<1/(d-1)$ for $i=0,\ldots,m-1$.
By the proof above with $p=p_1$, we have $u,Du\in L_{p_1}(\Pi_T)$. With $p_1$ and $p_2$ in place of $2$ and $p$ in the proof, we further deduce $u,Du\in L_{p_2}(\Pi_T)$. Repeating this procedure gives $u,Du\in L_{p}(\Pi_T)$.
Finally, by using the equation, we have $u\in \cH_{p,0}^{\alpha,1}(\Pi_T)$.

{\bf Case 3}: $p\in (2,\infty)$. As in Case 2, \eqref{eq0120_01} has a solution $u\in \cH_{2,0}^{\alpha,1}(\Pi_T)$. Next, we show that $u\in \cH_{p,0}^{\alpha,1}(\Pi_T)$ by using a duality argument.
Let $q=p/(p-1)\in (1,2)$ and $\tilde g_i,\tilde f\in C_0^\infty(\Pi_T)$.
By the result proved in Case 2, we know that the equation
$$
-\partial_t^\alpha v(t,x) + \Delta v(t,x) = D_i\tilde g_i(T-t,x)+\tilde f(T-t,x)
$$
in $\Pi_T$ has a unique solution $v\in \cH_{q,0}^{\alpha,1}(\Pi_T)\cap \cH_{2,0}^{\alpha,1}(\Pi_T)$ and
\begin{equation}
                            \label{eq6.09}
\|Dv\|_{L_q(\Pi_T)}+\|v\|_{L_q(\Pi_T)}\le N\|\tilde g\|_{L_q(\Pi_T)}+N\|\tilde{f}\|_{L_q(\Pi_T)},
\end{equation}
where $N=N(d,\delta,\alpha,q,T)$.
Let $w(t,x)=v(T-t,x)$.
By duality, it is easily seen that
$$
\int_{\Pi_T}(D_iu \tilde g_i-u\tilde f)\,dxdt=\int_{\Pi_T}(D_i w g_i-w f)\,dxdt.
$$
Therefore, by H\"older's inequality and \eqref{eq6.09},
\begin{align*}
\Big|\int_{\Pi_T}(D_iu \tilde g_i-u\tilde f)\,dxdt\Big|
&\le N\|Dw\|_{L_q(\Pi_T)}\|g\|_{L_p(\Pi_T)}+N\|w\|_{L_q(\Pi_T)}\|f\|_{L_p(\Pi_T)}\\
&= N\|Dv\|_{L_q(\Pi_T)}\|g\|_{L_p(\Pi_T)}+N\|v\|_{L_q(\Pi_T)}\|f\|_{L_p(\Pi_T)}\\
&\le N\Big(\|\tilde g\|_{L_q(\Pi_T)}+\|\tilde f\|_{L_q(\Pi_T)}\Big)\Big(\|g\|_{L_p(\Pi_T)}+\|f\|_{L_p(\Pi_T)}\Big).
\end{align*}
Since $\tilde g_i, \tilde{f}\in C_0^\infty(\Pi_T)$ are arbitrary and $C_0^\infty(\Pi_T)$ is dense in $L_q(\Pi_T)$, we see that $u,Du\in L_p(\Pi_T)$. It then follows from the equation that $u\in \cH_{p,0}^{\alpha,1}(\Pi_T)$.

For non-divergence form equations, we need to show the existence of a solution $u \in \bH_{p,0}^{\alpha,2}(\Pi_T)$ to the equation
\begin{equation}
							\label{eq0120_01b}
-\partial_t^\alpha u + \Delta u  = f
\end{equation}
in $\Pi_T$, where $f \in L_p(\Pi_T)$, with either the Dirichlet boundary condition or the Neumann boundary condition. By the solvability of divergence form equations obtained above, \eqref{eq0120_01b} has a unique solution $u\in \cH_{p,0}^{\alpha,1}(\Pi_T)$. To show that $u\in \bH_{p,0}^{\alpha,2}(\Pi_T)$, we again use the extension argument. Clearly, $u\eta_R\in \cH_{p,0}^{\alpha,1}(\bR^d_T)$ satisfies
\begin{equation}
                        \label{eq2.37}
-\partial_t^\alpha(u\eta_R) + \Delta(u\eta_R)= \eta_R f
+ 2 (D_i \eta_R) (D_i u)+(\Delta\eta_R) u
\end{equation}
in $\bR^d_T$, where $u$ and $f$ are extended ones as above.
Since the right-hand side of \eqref{eq2.37} is in $L_p(\bR^d_T)$, by Theorem \ref{thm0808_02}, there is a unique solution $\tilde u\in \bH_{p,0}^{\alpha,2}(\bR^d_T)$ to \eqref{eq2.37}. By the inclusion $\bH_{p,0}^{\alpha,2}(\bR^d_T)\subset \cH_{p,0}^{\alpha,1}(\bR^d_T)$ and the uniqueness of $\cH_{p,0}^{\alpha,1}(\bR^d_T)$-solutions, we have $\tilde u=u\eta_R$. Thus, by Theorem \ref{thm0808_02} again, it holds that
$$
\|u\eta_R\|_{\bH_p^{\alpha,2}(\bR^d_T)} \leq N\|\eta_R f
+ 2 (D_i \eta_R) (D_i u)+(\Delta\eta_R) u\|_{L_p(\bR^d_T)}.
$$
As before, by sending $R\to \infty$, we reach
$$
\|u\|_{\bH_p^{\alpha,2}(\Pi_T)} \leq N\|f\|_{L_p(\Pi_T)}.
$$
The proposition is proved.
\end{proof}

\section{Optimal embeddings for \texorpdfstring{$\cH_{p,0}^{\alpha,1}$}{}}

In this section we present elementary and self-contained proofs of H\"{o}lder embeddings (Theorem \ref{thm0717_1}) and Sobolev embeddings (Theorem \ref{thm0811_01}) for fractional parabolic Sobolev spaces $\cH_{p,0}^{\alpha,1}$.

We begin with the following two lemmas.

\begin{lemma}
							\label{lem0716_1}
Let $\alpha \in (0,1)$, $T \in (0,\infty)$, and $u, \eta
\in C^1\left([0,T]\right)$ with
$u(0)=\eta(T) = 0$.
Then
\begin{equation}
							\label{eq0716_01}
\int_0^T u'(t) \eta(t) \, dt = - \int_0^T \partial_t I_0^{1-\alpha} u (t) J_T^\alpha \eta'(t)\, dt,
\end{equation}
where
$$
J_T^{\alpha} v (t) = \frac{1}{\Gamma(\alpha)} \int_t^T (s-t)^{\alpha - 1} v(s) \, ds.
$$
If $J_T^\alpha \eta'(t) \in C^1\left([0,T]\right)$, we further have
\begin{equation}
							\label{eq0716_02}
\int_0^T u'(t) \eta(t) \, dt = \int_0^T I_0^{1-\alpha} u (t) \, \partial_t \left(J_T^\alpha \eta'(t) \right)\, dt.
\end{equation}
\end{lemma}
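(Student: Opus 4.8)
The plan is to reduce \eqref{eq0716_01} to two elementary facts: ordinary integration by parts, and the ``fractional integration by parts'' (Fubini) identity
\[
\int_0^T \bigl(I_0^\alpha \phi\bigr)(t)\,\psi(t)\,dt = \int_0^T \phi(s)\,\bigl(J_T^\alpha\psi\bigr)(s)\,ds,
\]
valid for continuous $\phi,\psi$ on $[0,T]$, which is immediate by swapping the order of integration over the region $\{0\le s\le t\le T\}$, the kernel $(t-s)^{\alpha-1}$ being integrable there.

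First I would note that, because $u\in C^1([0,T])$ with $u(0)=0$, the substitution $s\mapsto t-s$ and differentiation under the integral sign give $\partial_t I_0^{1-\alpha}u = I_0^{1-\alpha}u'$, which is continuous on $[0,T]$. Combined with the semigroup property $I_0^\alpha I_0^{1-\alpha}=I_0^1$ this yields
\[
I_0^\alpha\bigl(\partial_t I_0^{1-\alpha}u\bigr) = I_0^\alpha I_0^{1-\alpha}u' = I_0^1 u' = u,
\]
where the last equality again uses $u(0)=0$. Next, ordinary integration by parts together with $u(0)=\eta(T)=0$ gives $\int_0^T u'\eta\,dt = -\int_0^T u\,\eta'\,dt$. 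Substituting $u=I_0^\alpha(\partial_t I_0^{1-\alpha}u)$ and applying the Fubini identity above with $\phi=\partial_t I_0^{1-\alpha}u$ and $\psi=\eta'$ produces exactly \eqref{eq0716_01}.

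For \eqref{eq0716_02}, assuming $J_T^\alpha\eta'\in C^1([0,T])$, I would integrate by parts in $t$ on the right-hand side of \eqref{eq0716_01}, moving the $\partial_t$ off of $I_0^{1-\alpha}u$:
\[
-\int_0^T \partial_t\bigl(I_0^{1-\alpha}u\bigr)\,J_T^\alpha\eta'\,dt
= -\Bigl[I_0^{1-\alpha}u\cdot J_T^\alpha\eta'\Bigr]_0^T + \int_0^T I_0^{1-\alpha}u\,\partial_t\bigl(J_T^\alpha\eta'\bigr)\,dt.
\]
The boundary term vanishes since $I_0^{1-\alpha}u(0)=0$ and $J_T^\alpha\eta'(T)=0$ directly from the definitions of the two fractional integrals, which gives \eqref{eq0716_02}.

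The only technical points are the justification of differentiating under the integral sign (hence that $\partial_t I_0^{1-\alpha}u=I_0^{1-\alpha}u'$ is continuous), the integrability needed for the order-of-integration swap, and, for the second identity, the fact that the hypothesis $J_T^\alpha\eta'\in C^1$ genuinely cannot be dropped, since $\partial_t(J_T^\alpha\eta')$ behaves like $(T-t)^{\alpha-1}$ near $t=T$ unless $\eta'(T)=0$. None of these is a real obstacle; the argument is short once the adjointness of $I_0^\alpha$ and $J_T^\alpha$ is recorded.
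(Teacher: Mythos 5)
Your proof is correct and follows essentially the same route as the paper: express $u = I_0^\alpha(\partial_t I_0^{1-\alpha}u)$ (using $u(0)=0$), apply ordinary integration by parts, then use Fubini to transfer $I_0^\alpha$ onto $\eta'$ as $J_T^\alpha$, and finally integrate by parts once more for \eqref{eq0716_02} with the vanishing boundary values $I_0^{1-\alpha}u(0)=J_T^\alpha\eta'(T)=0$. The only cosmetic difference is that you re-derive $I_0^\alpha D_t^\alpha u = u$ from $\partial_t I_0^{1-\alpha}u=I_0^{1-\alpha}u'$ and the semigroup property, whereas the paper simply cites Lemma A.4 of \cite{MR3899965} for that identity.
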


\begin{proof}
Since $u(0) = 0$, by Lemma A.4 in \cite{MR3899965} we have
$$
I_0^\alpha D_t^\alpha u(t) = I_0^\alpha \partial_t^\alpha u(t) = u(t).
$$
Hence,
\begin{align*}
&\int_0^T u'(t) \eta(t) \, dt = - \int_0^T u(t) \eta'(t) \, dt = - \int_0^T I_0^\alpha D_t^\alpha u (t) \, \eta'(t) \, dt
\\
&= - \frac{1}{\Gamma(\alpha)} \int_0^T \int_0^t (t-s)^{\alpha-1} D_t^\alpha u(s) \, ds \, \eta'(t) \, dt
\\
&= - \frac{1}{\Gamma(\alpha)} \int_0^T D_t^\alpha u(s) \int_s^T (t-s)^{\alpha-1} \eta'(t) \, dt \, ds
= - \int_0^T \partial_t I_0^{1-\alpha} u(s) J_T^{\alpha}\eta'(s) \, ds,
\end{align*}
which gives \eqref{eq0716_01}.
If $J_T^\alpha \eta'(t) \in C^1\left([0,T]\right)$, using integration by parts with
$$
I_0^{1-\alpha}u(s)|_{s=0} = J_T^\alpha \eta'(s)|_{s=T} = 0,
$$
we arrive at \eqref{eq0716_02}.
\end{proof}

\begin{lemma}
							\label{lem0717_01}
Let $T \in (0,\infty)$, $\alpha \in (0,1)$, and $p \in (1,\infty)$ such that
$$
\sigma := 1 - (d+2/\alpha)/p > 0.
$$
Also let $u \in C_0^\infty\left([0,T] \times \bR^d\right)$ satisfy $u(0,x) = 0$ and
\begin{equation}
							\label{eq0731_01}
\int_0^T \int_{\bR^d} I_0^{1-\alpha} u \varphi_t \, dx \, dt = \int_0^T \int_{\bR^d} g_i D_i \varphi \, dx \, dt - \int_0^T \int_{\bR^d} f \varphi \, dx \, dt
\end{equation}
for any $\varphi \in C_0^\infty\left([0,T) \times \bR^d\right)$, where $g_i, f \in L_p(\bR^d_T)$.
Then, for $t_1, t_2 \in [0,T]$ with $t_1 < t_2$ and $r > 0$, we have
\begin{multline}
							\label{eq0717_02}
\left|\int_{B_r(x_1)} \left( u(t_2,x) - u(t_1,x) \right) \phi(x) \, dx \right|
\\
\leq N (t_2-t_1)^{\alpha-1/p} \left(\|g\|_{L_p(\bR^d_T)} + \|f\|_{L_p(\bR^d_T)}\right)\|\phi\|_{W_{p'}^1(B_r(x_1))},
\end{multline}
where $\phi \in C_0^\infty\left(B_r(x_1)\right)$,
$N = N(\alpha,p)$, and $1/p + 1/p' = 1$.
\end{lemma}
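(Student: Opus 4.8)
The plan is to collapse the vector-valued weak identity \eqref{eq0731_01} to a scalar fractional ODE by pairing against the fixed spatial profile $\phi$, to solve that ODE explicitly by inverting the fractional integral, and finally to estimate the resulting time kernel in $L_{p'}$.

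\emph{Reduction to one variable.} Set $U(t)=\int_{B_r(x_1)}u(t,x)\phi(x)\,dx$ and $H(t)=\int_{B_r(x_1)}\big(g_i(t,x)D_i\phi(x)-f(t,x)\phi(x)\big)\,dx$. Testing \eqref{eq0731_01} with $\varphi(t,x)=\zeta(t)\phi(x)$, $\zeta\in C_0^\infty([0,T))$, and using Fubini together with the fact that $I_0^{1-\alpha}$ acts only in $t$, one obtains
\[
\int_0^T I_0^{1-\alpha}U(t)\,\zeta'(t)\,dt=\int_0^T H(t)\,\zeta(t)\,dt
\]
for all such $\zeta$. Since $u\in C_0^\infty$ and $u(0,\cdot)=0$, $U$ is smooth and $I_0^{1-\alpha}U$ is continuous on $[0,T]$ with $I_0^{1-\alpha}U(0)=0$. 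Comparing $I_0^{1-\alpha}U$ with $-\int_0^{(\cdot)}H$ (two continuous functions with the same distributional derivative and the same value at $0$) then gives the pointwise identity $I_0^{1-\alpha}U=-I_0^1H$ on $[0,T]$.

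\emph{Inversion.} Apply $I_0^\alpha$ to both sides and use the semigroup property $I_0^\alpha I_0^{1-\alpha}=I_0^1$, so that $\int_0^t U(s)\,ds=-I_0^{\alpha+1}H(t)$; differentiating (the boundary term vanishes since $\alpha>0$) yields $U=-I_0^\alpha H$, that is,
\[
U(t_2)-U(t_1)=-\frac1{\Gamma(\alpha)}\int_0^T K(s)H(s)\,ds,\qquad K(s)=(t_2-s)_+^{\alpha-1}-(t_1-s)_+^{\alpha-1}.
\]
Alternatively one can reach this via Lemma \ref{lem0716_1} applied for each fixed $x$ with $\eta$ a mollified $\mathbf{1}_{(t_1,t_2)}$, since then $J_T^\alpha\eta'\to -K/\Gamma(\alpha)$; I prefer the direct route, which avoids the limiting argument and also handles the endpoint case $t_2=T$.

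\emph{Estimates.} By H\"older in $x$, $\|H\|_{L_p(0,T)}\le \|\phi\|_{W_{p'}^1(B_r(x_1))}\big(\|g\|_{L_p(\bR^d_T)}+\|f\|_{L_p(\bR^d_T)}\big)$, and by H\"older in $t$, $|U(t_2)-U(t_1)|\le \Gamma(\alpha)^{-1}\|K\|_{L_{p'}(0,T)}\|H\|_{L_p(0,T)}$, so \eqref{eq0717_02} reduces to the kernel bound $\|K\|_{L_{p'}(0,T)}\le N(\alpha,p)(t_2-t_1)^{\alpha-1/p}$. Here the hypothesis $\sigma>0$ is used only through $p>d+2/\alpha$, hence $p>1/\alpha$, equivalently $(\alpha-1)p'+1>0$, which makes $K$ lie in $L_{p'}$ near its singularities $s=t_1,t_2$; an elementary computation (split $(0,T)$ at $s=t_1$ and $s=t_2$, substitute $w=t_1-s$, and use $0\le w^{\alpha-1}-(w+\tau)^{\alpha-1}\le\min\{w^{\alpha-1},(1-\alpha)\tau w^{\alpha-2}\}$ with $\tau=t_2-t_1$, the tail converging since $(2-\alpha)p'-1>0$) shows each piece is $\le N(\alpha,p)\tau^{(\alpha-1)p'+1}$; since $(\alpha-1)p'+1=(\alpha-1/p)p'$, taking $p'$-th roots yields the claim. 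I expect the only delicate point to be making the fractional-calculus inversion rigorous at the level of the weak formulation (passing from the distributional relation for $I_0^{1-\alpha}U$ to the closed form $U=-I_0^\alpha H$); once that is in place the kernel estimate is routine.
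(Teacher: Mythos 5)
Your proof is correct and takes a genuinely different (and arguably cleaner) route than the paper's. The paper proceeds by mollifying $1_{(t_1,t_2)}$ into $\eta_\varepsilon$, invoking Lemma \ref{lem0716_1} to write $u(t_2,x)-u(t_1,x)$ as a limit of integrals of $I_0^{1-\alpha}u$ against $\partial_t[J_{t_2}^\alpha\eta_\varepsilon'\,\phi]$, inserting the weak identity \eqref{eq0731_01} on $(0,t_2)$, and then proving the uniform-in-$\varepsilon$ kernel bound $\|J_{t_2}^\alpha\eta_\varepsilon'\|_{L_{p'}(0,t_2)}\le N(t_2-t_1)^{\alpha-1/p}$; this requires tracking the mollifier through the whole argument and passing to the limit at the end. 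You instead test \eqref{eq0731_01} against the separated function $\zeta(t)\phi(x)$ to reduce the $(d+1)$-dimensional weak identity to the scalar relation $\partial_t I_0^{1-\alpha}U=-H$ with $I_0^{1-\alpha}U(0)=0$ (justified by the smoothness of $U$ and $U(0)=0$), invert the Riemann--Liouville operator via the semigroup law to obtain the closed form $U=-I_0^\alpha H$, and then estimate the exact kernel $K(s)=(t_2-s)_+^{\alpha-1}-(t_1-s)_+^{\alpha-1}$ in $L_{p'}$. This is essentially the $\varepsilon\to0$ limit of the kernel the paper estimates, and your $L_{p'}$ computation (split at $s=t_1$, use the pointwise bound $\min\{w^{\alpha-1},(1-\alpha)\tau w^{\alpha-2}\}$, check $(\alpha-1)p'+1>0$ and $(2-\alpha)p'>1$) is the same calculation without the mollifier. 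Your route buys two things: it avoids the limiting argument entirely, and, as you note, it also sidesteps the mild delicacy in the paper's step that uses \eqref{eq0731_01} with $T$ replaced by $t_2$ on a test function $J_{t_2}^\alpha\eta_\varepsilon'\phi$ that decays like $(t_2-t)^\alpha$ near $t_2$ rather than vanishing on a neighborhood of $t_2$. The only point worth spelling out in a final write-up is the inversion itself (that from the distributional relation plus $I_0^{1-\alpha}U(0)=0$ one gets $I_0^{1-\alpha}U=-I_0^1H$ pointwise, and then $U=-I_0^\alpha H$ by differentiating $I_0^1U=-I_0^{1+\alpha}H$, which is licit since $\alpha>1/p$ makes $I_0^\alpha H$ continuous and $U$ is continuous), but your sketch covers these steps adequately.
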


\begin{proof}
Without loss of generality, we assume that $x_1 = 0$.
Let $\zeta(s)$ be an infinitely differentiable function such that $\zeta(s) \geq 0$,
$\zeta(s) = 0$ for $s \geq 0$, and
$$
\int_\bR \zeta(s) \, ds = \int_{-\infty}^0 \zeta(s) \, ds = 1.
$$
Set $\zeta_\varepsilon(s) = \varepsilon^{-1}\zeta(s/\varepsilon)$ and
$$
\eta_\varepsilon(t) = \int_{-\infty}^\infty 1_{(t_1,t_2)}(s) \zeta_\varepsilon(t-s) \, ds.
$$
Note that $\eta_\varepsilon$ is an approximation of $1_{(t_1,t_2)}$ and $\eta_\varepsilon(t) = 0$ for $t \geq t_2$.
We then write
\begin{align*}
&u(t_2,x) - u(t_1,x) = \int_{t_1}^{t_2} \partial_t u(t,x) \, dt = \int_0^{t_2} \partial_t u(t,x) 1_{(t_1,t_2)}(t) \, dt\\
&= \lim_{\varepsilon \to 0} \int_0^{t_2} \partial_t u(t,x) \eta_\varepsilon(t) \, dt = \lim_{\varepsilon \to 0} \int_0^{t_2} I_0^{1-\alpha}u(t,x) \partial_t \left[ J_{t_2}^\alpha \eta_\varepsilon'(t) \right] \, dt,
\end{align*}
where the last equality follows from Lemma \ref{lem0716_1} with $t_2$ in place of $T$ along with the observations that $u(0,x) = \eta_\varepsilon(t_2) = 0$ and $J_{t_2}^\alpha \eta_\varepsilon'(t) \in C^1([0,T])$.
Then, for $\phi \in C_0^\infty(B_r)$,
\begin{align*}
&\int_{B_r} (u(t_2,x) - u(t_1,x)) \phi(x) \, dx
= \lim_{\varepsilon \to 0} \int_{\bR^d} \int_0^{t_2} \partial_t u(t,x) \eta_\varepsilon(t) \, dt \, \phi(x) \, dx
\\
&= \lim_{\varepsilon \to 0} \int_0^{t_2} \int_{\bR^d}I_0^{1-\alpha}u(t,x) \partial_t\left[J_{t_2}^\alpha \eta'_\varepsilon(t) \phi(x)\right] \, dx \, dt
\\
&= \lim_{\varepsilon \to 0} \left(\int_0^{t_2} \int_{\bR^d} g_i(t,x) J_{t_2}^\alpha \eta'_\varepsilon(t) D_i \phi(x) \, dx \, dt - \int_0^{t_2} \int_{\bR^d} f(t,x) J_{t_2}^\alpha \eta'_\varepsilon(t) \phi(x) \, dx \, dt\right),
\end{align*}
where the last equality is due to \eqref{eq0731_01}, which is satisfied with $t_2$ in place $T$ since $J_{t_2}^\alpha \eta'_\varepsilon(t) \phi(x) \in C_0^\infty\left([0,t_2) \times \bR^d\right)$ and $J_{t_2}^\alpha \eta'_\varepsilon(t) \phi(x) = 0$ for $t \geq t_2$.
This shows that
\begin{multline}
							\label{eq0717_03}
\left|\int_{B_r} \left(u(t_2,x) - u(t_1,x) \right) \phi(x) \, dx \right| \leq \limsup_{\varepsilon \to 0} \left|\int_0^{t_2} \int_{\bR^d} f(t,x) J_{t_2}^\alpha \eta'_\varepsilon(t) \phi(x) \, dx \, dt\right|
\\
+ \limsup_{\varepsilon \to 0} \left|\int_0^{t_2} \int_{\bR^d} g_i(t,x) J_{t_2}^\alpha \eta'_\varepsilon(t) D_i \phi(x) \, dx \, dt\right|,
\end{multline}
where, for each $\varepsilon > 0$,
\begin{multline}
							\label{eq0717_04}	
\left|\int_0^{t_2} \int_{\bR^d} g_i(t,x) J_{t_2}^\alpha \eta'_\varepsilon(t) D_i \phi(x) \, dx \, dt\right|
\\
\leq \|g\|_{L_p(\bR^d_T)} \|D\phi\|_{L_{p'}(B_r)} \left(\int_0^{t_2} \left| J_{t_2}^\alpha \eta_\varepsilon'(t)\right|^{p'} \, dt \right)^{1/p'}
\end{multline}
and
\begin{multline}
							\label{eq0717_05}
\left|\int_0^{t_2} \int_{\bR^d} f(t,x) J_{t_2}^\alpha \eta'_\varepsilon(t) \phi(x) \, dx \, dt\right|
\\
\leq \|f\|_{L_p(\bR^d_T)} \|\phi\|_{L_{p'}(B_r)} \left(\int_0^{t_2} \left| J_{t_2}^\alpha \eta_\varepsilon'(t)\right|^{p'} \, dt \right)^{1/p'}.
\end{multline}
We now prove that
\begin{equation}
							\label{eq0717_06}
\left(\int_0^{t_2} \left| J_{t_2}^\alpha \eta_\varepsilon'(t)\right|^{p'} \, dt \right)^{1/p'} \leq N(\alpha,p)(t_2-t_1)^{\alpha-1/p}.
\end{equation}
If this holds, we obtain \eqref{eq0717_02} by combining \eqref{eq0717_03}, \eqref{eq0717_04}, \eqref{eq0717_05}, and \eqref{eq0717_06}.
To prove \eqref{eq0717_06}, we note that
$$
\eta_\varepsilon(t) = \int_{t_1}^{t_2} \zeta_{\varepsilon}(t-s) \, ds = \int_{t-t_2}^{t-t_1} \zeta_\varepsilon(s) \, ds.
$$
This gives
$$
\eta_\varepsilon'(t) = \zeta_\varepsilon(t-t_1) - \zeta_\varepsilon(t-t_2)
$$
and
\begin{align*}
&\Gamma(\alpha) J_{t_2}^\alpha \eta'_{\varepsilon}(t) = \int_t^{t_2} (s-t)^{\alpha-1} \zeta_\varepsilon(s-t_1) \, ds - \int_t^{t_2} (s-t)^{\alpha-1} \zeta_\varepsilon(s-t_2) \, ds\\
&= \left\{
\begin{aligned}
&\int_0^{t_1-t} (t_1-t-s)^{\alpha-1} \zeta_\varepsilon(-s) \, ds - \int_0^{t_2-t} (t_2-t-s)^{\alpha-1} \zeta_\varepsilon(-s) \, ds, \,\, 0 \leq t \leq t_1,
\\
&- \int_0^{t_2-t} (t_2-t-s)^{\alpha-1} \zeta_\varepsilon(-s) \, ds, \,\, t_1 < t \leq t_2,
\end{aligned}
\right.
\end{align*}
where the last equality is due to the choice of $\zeta$ so that $\zeta_\varepsilon(s) = 0$ for $s \geq 0$.
That is, if we denote
$$
\tilde{\zeta}_\varepsilon(s) = \zeta_\varepsilon(-s),
$$
then
$$
J_{t_2}^\alpha \eta_\varepsilon'(t)
= \left\{
\begin{aligned}
(I_0^{\alpha} \tilde{\zeta}_\varepsilon)(t_1-t) - (I_0^{\alpha} \tilde{\zeta}_\varepsilon)(t_2-t), \quad 0 \leq t \leq t_1,
\\
- (I_0^{\alpha} \tilde{\zeta}_\varepsilon)(t_2-t), \quad t_1 < t \leq t_2,
\end{aligned}
\right.
$$
and
\begin{align*}
&\|J_{t_2}^\alpha \eta_\varepsilon'(t)\|_{L_{p'}(0,t_2)}^{p'} = \int_0^{t_1} \left|(I_0^{\alpha} \tilde{\zeta}_\varepsilon)(t_1-t) - (I_0^{\alpha} \tilde{\zeta}_\varepsilon)(t_2-t)\right|^{p'} \, dt\\
&\qquad + \int_{t_1}^{t_2} \left|(I_0^{\alpha} \tilde{\zeta}_\varepsilon)(t_2-t)\right|^{p'} \, dt := K_1 + K_2.
\end{align*}
By Lemma A.2 in \cite{MR3899965} along with $\alpha - 1 + 1/p' = \alpha - 1/p > 0$, we have
\begin{align}
							\label{eq0717_07}
K_2^{1/p'} &= \|I_0^\alpha \tilde{\zeta}_\varepsilon\|_{L_{p'}(0,t_2-t_1)} \leq  N (t_2-t_1)^{\alpha-1+1/p'}\|\tilde{\zeta}_\varepsilon\|_{L_1(0,t_2-t_1)}
\notag\\
&\leq N (t_2-t_1)^{\alpha-1/p},
\end{align}
where $N = N(\alpha,p)$.
To estimate $K_1$, we write
\begin{align*}
\Gamma(\alpha) & \left[(I_0^{\alpha} \tilde{\zeta}_\varepsilon)(t_1-t) - (I_0^{\alpha} \tilde{\zeta}_\varepsilon)(t_2-t)\right]
\\
= & \int_0^{t_1-t}(t_1-t-s)^{\alpha-1} \tilde{\zeta}_\varepsilon(s) \, ds - \int_0^{t_2-t}(t_2-t-s)^{\alpha-1}\tilde{\zeta}_\varepsilon(s) \, ds
\\
= & \int_0^{t_1-t}\left( (t_1-t-s)^{\alpha-1} - (t_2-t-s)^{\alpha-1}\right) \tilde{\zeta}_\varepsilon(s) \, ds
\\
& - \int_{t_1-t}^{t_2-t} (t_2-t-s)^{\alpha-1} \tilde{\zeta}_\varepsilon(s) \, ds.
\end{align*}
Thus,
\begin{align*}
K_1^{1/p'} &\leq N \left\|\int_0^{t_1-t}\left( (t_1-t-s)^{\alpha-1} - (t_2-t-s)^{\alpha-1}\right) \tilde{\zeta}_\varepsilon(s) \, ds\right\|_{L_{p'}(0,t_1)}\\
&\quad + N \left\|\int_{t_1-t}^{t_2-t} (t_2-t-s)^{\alpha-1} \tilde{\zeta}_\varepsilon(s) \, ds\right\|_{L_{p'}(0,t_1)}\\
&\leq N \int_0^{t_1}\left(\int_0^{t_1-s} \left| (t_1-t-s)^{\alpha-1}-(t_2-t-s)^{\alpha-1}\right|^{p'} \, dt \right)^{1/p'} \tilde{\zeta}_\varepsilon(s) \, ds\\
&\quad + N \int_0^{t_2} \left(\int_{t_1}^{t_2} \left|(t_2-t)^{\alpha-1}\right|^{p'} \, dt \right)^{1/p'} \, \tilde{\zeta}_\varepsilon(s) \, ds,
\end{align*}
where we used Minkowski's inequality for the second inequality.
Since $\alpha > 1-1/p'$, we readily see that
\begin{equation}
							\label{eq0717_01}
\left(\int_{t_1}^{t_2} (t_2-t)^{(\alpha-1)p'} \, dt\right)^{1/p'} = N(\alpha,p) (t_2-t_1)^{\alpha-1/p}.
\end{equation}
From the proof of Lemma A.14 in \cite{MR3899965} we also see that
$$
\left(\int_0^{t_1-s} \left| (t_1-t-s)^{\alpha-1}-(t_2-t-s)^{\alpha-1}\right|^{p'} \, dt \right)^{1/p'} \leq N(\alpha,p)(t_2-t_1)^{\alpha-1/p}.
$$
From this and \eqref{eq0717_01} together with the fact that the $L_1$-norm of $\tilde{\zeta}_\varepsilon$ on $\bR$ equals 1, we obtain that
$$
K_1^{1/p'} \leq N(\alpha,p)(t_2-t_1)^{\alpha-1/p},
$$
which together with \eqref{eq0717_07} proves \eqref{eq0717_06}.
The lemma is proved.
\end{proof}

We complete the proof of the H\"{o}lder continuity of functions in $\cH_{p,0}^{\alpha,1}$ for $p > d + 2/\alpha$.

\begin{theorem}[H\"{o}lder continuity]
							\label{thm0717_1}
Let $T \in (0,\infty)$, $\alpha \in (0,1)$, and $p \in (1,\infty)$ such that
$$
\sigma := 1 - (d+2/\alpha)/p > 0.
$$
Then, for $u \in \cH_{p,0}^{\alpha,1}(\bR^d_T)$, we have
$$
[u]_{C^{\sigma \alpha/2,\sigma}(\bR^d_T)} \leq N(d,\alpha,p) \|u\|_{\cH_p^{\alpha,1}(\bR^d_T)}.
$$
\end{theorem}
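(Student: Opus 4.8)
The plan is to prove the a priori bound $[u]_{C^{\sigma\alpha/2,\sigma}(\bR^d_T)}\le N(d,\alpha,p)\,\|u\|_{\cH_p^{\alpha,1}(\bR^d_T)}$ for $u\in C_0^\infty\bigl([0,T]\times\bR^d\bigr)$ with $u(0,\cdot)=0$, and then extend to arbitrary $u\in\cH_{p,0}^{\alpha,1}(\bR^d_T)$ by the density built into the definition of $\cH_{p,0}^{\alpha,1}$, using that an $L_p$-limit of a sequence bounded in $C^{\sigma\alpha/2,\sigma}$ keeps the bound. For such a smooth $u$ I fix $g_i,f\in L_p(\bR^d_T)$ with $D_t^\alpha u=D_ig_i+f$ in the sense of \eqref{eq0731_01} and $\sum_i\|g_i\|_{L_p}+\|f\|_{L_p}\le\|D_t^\alpha u\|_{\bH_p^{-1}}+1$, and I extend $u$, $D_xu$, $g_i$, $f$ by zero to $\{t\le0\}$; since $u(0,\cdot)=0$, the conclusion \eqref{eq0717_02} of Lemma \ref{lem0717_01} then holds for all $t_1<t_2\le T$.

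The key estimate is a parabolic Campanato-type bound: for every parabolic cylinder $Q_R(z)=Q_R(z_t,z_x)$ with $z_t\le T$,
\begin{equation*}
\dashint_{Q_R(z)}\bigl|u-(u)_{Q_R(z)}\bigr|\,dx\,dt\le N(d,\alpha,p)\,R^{\sigma}\,\|u\|_{\cH_p^{\alpha,1}(\bR^d_T)}.
\end{equation*}
To prove it I fix a nonnegative bump $\psi\in C_0^\infty(B_1)$ with $\int\psi=1$, set $\phi(x)=R^{-d}\psi\bigl((x-z_x)/R\bigr)$ and $h(t)=\int\phi(x)\,u(t,x)\,dx$, and bound $\dashint_{Q_R(z)}|u-(u)_{Q_R(z)}|\le2\dashint_{Q_R(z)}|u(t,x)-h(t)|+2\dashint_{Q_R(z)}|h(t)-m|$, where $m=\dashint h$ over the time interval. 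For the first piece, a weighted Poincar\'e inequality on each time slice gives $\dashint_{B_R(z_x)}|u(t,\cdot)-h(t)|\le NR^{1-d/p}\|D_xu(t,\cdot)\|_{L_p(B_R(z_x))}$, and averaging in $t$ over an interval of length $R^{2/\alpha}$ with H\"older's inequality turns this into $NR^{1-d/p-2/(\alpha p)}\|D_xu\|_{L_p(Q_R(z))}=NR^{\sigma}\|D_xu\|_{L_p(\bR^d_T)}$. For the second piece, $\dashint|h(t)-m|\le\dashint\dashint|h(t)-h(s)|$ and $|h(t)-h(s)|=\bigl|\int\phi\,(u(t,\cdot)-u(s,\cdot))\bigr|$; when $R\le1$ I apply Lemma \ref{lem0717_01}, using $\|\phi\|_{W_{p'}^1(B_R(z_x))}\le NR^{-d/p-1}$ and $\int_0^{R^{2/\alpha}}s^{\alpha-1/p}\,ds\le NR^{2-2/(\alpha p)}$ (note $\alpha-1/p>0$, which follows from $\sigma>0$), to get the bound $NR^{\sigma}\bigl(\|g\|_{L_p}+\|f\|_{L_p}\bigr)$; when $R\ge1$ I instead bound the whole mean oscillation crudely by $2\dashint_{Q_R(z)}|u|\le N|Q_R(z)|^{-1/p}\|u\|_{L_p}=NR^{\sigma-1}\|u\|_{L_p}\le NR^{\sigma}\|u\|_{L_p}$. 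Adding up and using $\sum_i\|g_i\|_{L_p}+\|f\|_{L_p}\le\|D_t^\alpha u\|_{\bH_p^{-1}}+1$ proves the Campanato bound (the extra $1$ is harmless and disappears in the final limit).

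From the Campanato bound the H\"older seminorm follows by the standard dyadic-telescoping argument. For $z\in\bR^d_T$ and $\rho>0$, continuity of the smooth $u$ gives $u(z)=\lim_{k\to\infty}(u)_{Q_{2^{-k}\rho}(z)}$, and since consecutive cylinders $Q_{2^{-k-1}\rho}(z)\subset Q_{2^{-k}\rho}(z)$ have volume ratio $2^{d+2/\alpha}$, each increment $\bigl|(u)_{Q_{2^{-k-1}\rho}(z)}-(u)_{Q_{2^{-k}\rho}(z)}\bigr|$ is dominated by $2^{d+2/\alpha}\dashint_{Q_{2^{-k}\rho}(z)}|u-(u)_{Q_{2^{-k}\rho}(z)}|\le N(2^{-k}\rho)^{\sigma}\|u\|_{\cH_p^{\alpha,1}}$; summing the geometric series ($\sigma>0$) gives $|u(z)-(u)_{Q_\rho(z)}|\le N\rho^{\sigma}\|u\|_{\cH_p^{\alpha,1}}$. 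Then, for $z^1=(t_1,x_1)$, $z^2=(t_2,x_2)\in\bR^d_T$ with $t_1\le t_2$ and parabolic distance $\rho=|x_1-x_2|+(t_2-t_1)^{\alpha/2}$, one checks $Q_\rho(z^1)\subset Q_{2\rho}(z^2)$ (using $t_1\le t_2$, $t_2-t_1\le\rho^{2/\alpha}$, and $2^{2/\alpha}\ge2$), hence $\bigl|(u)_{Q_\rho(z^1)}-(u)_{Q_{2\rho}(z^2)}\bigr|\le 2^{d+2/\alpha}\dashint_{Q_{2\rho}(z^2)}|u-(u)_{Q_{2\rho}(z^2)}|\le N\rho^{\sigma}\|u\|_{\cH_p^{\alpha,1}}$, and similarly for $(u)_{Q_\rho(z^2)}$; combining these with the two telescoping estimates at $z^1$ and $z^2$ yields $|u(z^1)-u(z^2)|\le N\rho^{\sigma}\|u\|_{\cH_p^{\alpha,1}}$. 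This is the parabolic H\"older seminorm, which is equivalent to $[u]_{C^{\sigma\alpha/2,\sigma}(\bR^d_T)}$, and an approximation argument (an approximating sequence $u_n\to u$ both in $\cH_p^{\alpha,1}$ and in $L_p$, hence a.e.\ along a subsequence, is equi-H\"older by the bound just proved) completes the proof for general $u$.

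The main obstacle is that there is no control on the slice norm $\|D_xu(t,\cdot)\|_{L_p(\bR^d)}$ at an individual time, so the naive ``Morrey inequality at fixed $t$, then move in $t$'' approach fails; the fix is to work throughout with full parabolic cylinders and to average the slice-wise Poincar\'e estimate in time via H\"older's inequality, the exponent identity $1-d/p-2/(\alpha p)=\sigma$ being exactly what makes the estimate close.
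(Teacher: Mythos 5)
Your proposal is correct, and it reaches the conclusion by a genuinely different route from the paper's.

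The paper works directly on the seminorm $K$ restricted to parabolic distance $\leq 1$: it splits $u(t_1,x)-u(t_2,y)=J_1+J_2$ into a time increment and a space increment, handles $J_1$ by paying $2K\rho^\sigma$ to move the space point and then integrating against a bump $\phi$ so that Lemma \ref{lem0717_01} applies, handles $J_2$ by the slice-wise Sobolev embedding averaged in $t$, and closes a bootstrap $K\leq 4\varepsilon^\sigma K+N\|u\|_{\cH_p^{\alpha,1}}$ (which uses the a priori finiteness of $K$, automatic for smooth compactly supported $u$); a separate averaging step supplies the $L_\infty$ bound needed to pass from the restricted to the full seminorm. You instead prove the parabolic Campanato estimate
$\dashint_{Q_R(z)}|u-(u)_{Q_R(z)}|\leq NR^\sigma\|u\|_{\cH_p^{\alpha,1}}$
for all $R>0$ — spatial oscillation by slice-wise Poincar\'e plus H\"older in time, temporal oscillation of $h(t)=\int\phi\,u(t,\cdot)$ by Lemma \ref{lem0717_01}, and large $R$ crudely — and then run the dyadic telescoping and nesting argument. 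Both proofs rest on Lemma \ref{lem0717_01} as the time-regularity input and on slice-wise Poincar\'e/Sobolev averaged in $t$ for the spatial part, so the analytic ingredients coincide; the scaffolding differs. Your Campanato route has a couple of mild advantages: there is no circular bootstrap to close, and the unrestricted seminorm comes out directly without a separate $L_\infty$ bound, since the large-scale cylinders are already covered. Two small points worth tightening: the displayed bound should read as a double average, $\dashint\dashint|t-s|^{\alpha-1/p}\leq NR^{2-2/(\alpha p)}$, rather than the integral $\int_0^{R^{2/\alpha}}s^{\alpha-1/p}\,ds$ (the exponent arithmetic you use is the correct one); and when the time interval of $Q_R(z)$ dips below $t=0$ you are implicitly applying Lemma \ref{lem0717_01} with $t_1$ replaced by $0$ (valid because $u(t_1,\cdot)=u(0,\cdot)=0$ and $\alpha-1/p>0$), which you flag but could state explicitly.
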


\begin{proof}
By mollifications, we may assume that $u \in C_0^\infty\left([0,T] \times \bR^d\right)$ such that $u(0,x) = 0$ and
$$
\int_0^T \int_{\bR^d} I_0^{1-\alpha} u \varphi_t \, dx \, dt = \int_0^T \int_{\bR^d} g_i D_i \varphi \, dx \, dt - \int_0^T \int_{\bR^d} f \varphi \, dx \, dt
$$
for any $\varphi \in C_0^\infty\left([0,T) \times \bR^d\right)$, where $g_i, f \in L_p(\bR^d_T)$.
Define
$$
K = \sup\left\{\frac{|u(t,x) - u(s,y)|}{|t-s|^{\frac{\sigma \alpha}2} + |x-y|^\sigma}:
(t,x),(s,y) \in \bR^d_T,0<|t_1-t_2|^{\frac \alpha 2} + |x-y|\le 1\right\}.
$$
To prove the estimate, we take $(t_1,x), (t_2,y) \in \bR^d_T$ and set
$$
\rho = \varepsilon \left( |t_1-t_2|^{\alpha/2} + |x-y| \right) < 1,
$$
where $\varepsilon \in (0,1)$ is to be specified below.
We write
$$
u(t_1,x) - u(t_2,y) = \left(u(t_1,x) - u(t_2,x)\right) + \left(u(t_2,x) - u(t_2,y)\right) := J_1 + J_2.
$$
To estimate $J_1$, for $z \in B_\rho(x)$, we have
\begin{equation}
							\label{eq0801_01}
\begin{aligned}
J_1 &= \left(u(t_1,x)-u(t_1,z)\right) + \left(u(t_1,z) - u(t_2,z)\right) + \left(u(t_2,z) - u(t_2,x)\right)
\\
&\leq 2 K \rho^\sigma + \left(u(t_1,z) - u(t_2,z)\right).
\end{aligned}
\end{equation}
Take $\phi(z) \in C_0^\infty\left(B_\rho(x)\right)$ such that $\phi(z) \geq 0$,
\begin{equation}
							\label{eq0717_08}
\int_{B_\rho(x)} \phi(z) \, dz = 1, \quad \|\phi\|_{L_{p'}\left(B_\rho(x)\right)} = N \rho^{-d/p}, \quad \|D\phi\|_{L_{p'}\left(B_\rho(x)\right)} = N\rho^{-d/p-1},
\end{equation}
where $N = N(d,p)$.
Then by multiplying both sides of the  inequality \eqref{eq0801_01} by $\phi$ and integrating over $B_\rho(x)$, we have
$$
J_1 \leq 2 K \rho^\sigma + \int_{B_\rho(x)} \left(u(t_1,z) - u(t_2,z)\right) \phi(z) \, dz.
$$
Using this and a corresponding inequality for $-J_1$, it follows from Lemma \ref{lem0717_01}, \eqref{eq0717_08}, and the choice of $\rho$ that
\begin{align*}
|J_1| &\leq 2 K \rho^\sigma + N |t_2-t_1|^{\alpha-1/p} \rho^{-d/p-1} \left(\|g\|_{L_p(\bR^d_T)} + \|f\|_{L_p(\bR^d_T)}\right)\\
&\leq 2 K \rho^\sigma + N \varepsilon^{-2+2/(\alpha p)} \rho^\sigma \left(\|g\|_{L_p(\bR^d_T)} + \|f\|_{L_p(\bR^d_T)}\right),
\end{align*}
where $N = N(d,\alpha,p)$.

To estimate $|J_2|$, we have that, for $s \in (t_2-\rho^{1/\alpha},t_2+\rho^{1/\alpha}) \cap (0,T)$,
\begin{align*}
|J_2| &\leq |u(t_2,x) - u(s,x)| + |u(s,x) - u(s,y)| + |u(s,y) - u(t_2, y)|\\
&\leq
2 K \rho^\sigma + N(d,p)|x-y|^{1-d/p} \|u(s,\cdot)\|_{W_p^1(\bR^d)},
\end{align*}
where we used the usual Sobolev embedding for functions in $x \in \bR^d$ and the condition that $1-d/p > 0$.
Then by taking the average of $J_2$ over the interval $(t_2 - \rho^{2/\alpha}, t_2+\rho^{2/\alpha}) \cap (0,T)$ with respect to $s$ along with H\"{o}lder's inequality, we get
$$
|J_2| \leq 2K \rho^\sigma + N \varepsilon^{-1+d/p} \rho^\sigma \||u|+|D_x u|\|_{L_p(\bR^d_T)}.
$$

Collecting the estimates for $J_1$ and $J_2$ above, we see that
$$
|u(t_1,x) - u(t_2,y)| \leq 4 K \rho^\sigma + N \left(\varepsilon^{-2+2/(\alpha p)} +  \varepsilon^{-1+d/p}  \right) \rho^\sigma \|u\|_{\cH_p^{\alpha,1} (\bR^d_T)},
$$
which implies that
$$
K \leq 4 \varepsilon^\sigma K + N(d,\alpha,p) (\varepsilon^{-1-d/p} +  \varepsilon^{-2/(\alpha p)} )\|u\|_{\cH_p^{\alpha,1} (\bR^d_T)}.
$$
By choosing $\varepsilon > 0$ small enough so that
$4 \varepsilon^\sigma < 1$, we obtain
\begin{equation}
                            \label{eq8.11}
K\le N(d,\alpha,p)\|u\|_{\cH_p^{\alpha,1} (\bR^d_T)}.
\end{equation}
To finish the proof, it suffices to show that
\begin{equation}
                                    \label{eq8.12}
\|u\|_{L_\infty(\bR^d_T)} \leq N(d,\alpha,p) \|u\|_{\cH_p^{\alpha,1}(\bR^d_T)}.
\end{equation}
Indeed, for any $(t,x),(s,y)\in \bR^d_T$, by the triangle inequality we have
$$
|u(t,x)|\le |u(t,x)-u(s,y)|+|u(s,y)|.
$$
Taking the averages of both sides of the above inequality with respect to $(s,y)$ over the set
$$
\{(s,y)\in \bR^d_T:|t-s|^{\alpha/2} + |x-y|\le 1\}
$$
and using \eqref{eq8.11}, we get \eqref{eq8.12}. The lemma is proved.
\end{proof}

\begin{corollary}
                                        \label{cor1029_1}
Let $T \in (0,\infty)$, $0 < r < R < \infty$, $\alpha \in (0,1)$, and $p \in (1,\infty)$ such that
$$
\sigma := 1 - (d+2/\alpha)/p > 0.
$$
Then, for $u \in \cH_{p,0}^{\alpha,1}\left((0,T) \times B_R\right)$, we have
\begin{equation}
								\label{eq0224_01}
[u]_{C^{\sigma \alpha/2, \sigma}\left((0,T) \times B_r\right)} \leq N(d,\alpha,p,r,R) \|u\|_{\cH_p^{\alpha,1}\left((0,T) \times B_R\right)}.
\end{equation}
\end{corollary}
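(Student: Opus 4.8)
The plan is to deduce this from the global H\"older embedding of Theorem \ref{thm0717_1} by localizing with a spatial cutoff. Fix $\zeta \in C_0^\infty(B_R)$ with $\zeta \equiv 1$ on $B_r$, and set $v := u\zeta$, understood to be extended by zero to all of $\bR^d_T$. Since $\zeta$ depends only on $x$, it commutes with the Riemann--Liouville operator, so $I_0^{1-\alpha} v = \zeta\, I_0^{1-\alpha} u$. The first and main step is to show that
\[
v \in \cH_{p,0}^{\alpha,1}(\bR^d_T), \qquad \|v\|_{\cH_p^{\alpha,1}(\bR^d_T)} \le N(d,p,r,R)\,\|u\|_{\cH_p^{\alpha,1}((0,T)\times B_R)}.
\]

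To see this, pick $g_i, f \in L_p((0,T)\times B_R)$ realizing $D_t^\alpha u = D_i g_i + f$ in the sense of \eqref{eq0720_01}. Given any $\varphi \in C_0^\infty([0,T)\times\bR^d)$, the function $\zeta\varphi$ belongs to $C_0^\infty([0,T)\times B_R)$ and hence is an admissible test function for $u$ (here one uses that $u$ lies in $\cH_p^{\alpha,1}$, so \eqref{eq0720_01} holds without requiring the test function to vanish at $t=0$). Inserting $\zeta\varphi$ into \eqref{eq0720_01} and expanding $D_i(\zeta\varphi) = \zeta D_i\varphi + (D_i\zeta)\varphi$, one finds that $D_t^\alpha v = D_i(\zeta g_i) + (\zeta f - g_i D_i\zeta)$ in $\bR^d_T$, whence $\|D_t^\alpha v\|_{\bH_p^{-1}(\bR^d_T)} \le N\big(\sum_i\|g_i\|_{L_p((0,T)\times B_R)} + \|f\|_{L_p((0,T)\times B_R)}\big)$; taking the infimum over all such representations gives $\|D_t^\alpha v\|_{\bH_p^{-1}(\bR^d_T)} \le N\,\|D_t^\alpha u\|_{\bH_p^{-1}((0,T)\times B_R)}$. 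Combined with the elementary bounds $\|v\|_{L_p(\bR^d_T)} \le \|u\|_{L_p((0,T)\times B_R)}$ and $\|D_x v\|_{L_p(\bR^d_T)} \le N\big(\|D_x u\|_{L_p((0,T)\times B_R)} + \|u\|_{L_p((0,T)\times B_R)}\big)$, this yields the asserted norm bound and shows $v \in \cH_p^{\alpha,1}(\bR^d_T)$. To upgrade this to $v \in \cH_{p,0}^{\alpha,1}(\bR^d_T)$, take the sequence $u_n \in C^\infty([0,T]\times B_R)$ with $u_n(0,\cdot) = 0$ and $u_n \to u$ in $\cH_p^{\alpha,1}((0,T)\times B_R)$ supplied by the definition of $\cH_{p,0}^{\alpha,1}((0,T)\times B_R)$; then $u_n\zeta \in C^\infty([0,T]\times\bR^d)$ vanishes for large $|x|$ and at $t=0$, and the same computation applied to $u_n - u$ shows $u_n\zeta \to v$ in $\cH_p^{\alpha,1}(\bR^d_T)$.

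With this in hand, apply Theorem \ref{thm0717_1} to $v$ to get $[v]_{C^{\sigma\alpha/2,\sigma}(\bR^d_T)} \le N(d,\alpha,p)\,\|v\|_{\cH_p^{\alpha,1}(\bR^d_T)}$. Since $v$ agrees with $u$ on $(0,T)\times B_r$, the left-hand side dominates $[u]_{C^{\sigma\alpha/2,\sigma}((0,T)\times B_r)}$, and chaining with the norm estimate of the previous step produces \eqref{eq0224_01}. The only point that requires genuine care is the first step: one must check that the cutoff $v$ lands in $\cH_{p,0}^{\alpha,1}(\bR^d_T)$ rather than merely $\tilde{\cH}_p^{\alpha,1}(\bR^d_T)$, which relies on $\zeta$ being time-independent (so that no commutator term of the kind appearing for temporal cutoffs is generated, and the multiplier formula $I_0^{1-\alpha}(u\zeta) = \zeta I_0^{1-\alpha}u$ holds exactly) and on $u$ already having zero initial trace in the sense built into $\cH_{p,0}^{\alpha,1}$. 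Everything else is routine.
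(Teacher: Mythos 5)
Your proof is correct and follows exactly the same route as the paper: multiply by a time-independent spatial cutoff supported in $B_R$ and equal to $1$ on $B_r$, verify that the product lies in $\cH_{p,0}^{\alpha,1}(\bR^d_T)$ with a commutator-free representation of $D_t^\alpha(\zeta u)$, and then apply Theorem \ref{thm0717_1}. The only difference is cosmetic: you spell out the verification that $\zeta u \in \cH_{p,0}^{\alpha,1}(\bR^d_T)$ (via the weak formulation with test function $\zeta\varphi$ and the approximating sequence $u_n\zeta$), whereas the paper simply asserts it.
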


\begin{proof}
Take an infinitely differentiable function $\psi(x)$ defined on $\bR^d$ such that $\psi(x) = 1$ on $B_r$ and $\psi(x) = 0$ on $\bR^d \setminus B_R$, and consider $\psi u$, which belongs to $\cH_{p,0}^{\alpha,1}(\bR^d_T)$.
In particular,
$$
\partial_t^\alpha (\psi u) = D_i (\psi g_i) + \psi f - g_i D_i \psi
$$
in $\bR^d$ if
$$
\partial_t^\alpha u = D_i g_i + f
$$
in $(0,T) \times B_R$.
Then by Theorem \ref{thm0717_1}, we have
$$
\|u\|_{C^{\sigma\alpha/2,\sigma}\left((0,T) \times B_r\right)} \leq N \|\psi u\|_{C^{\sigma\alpha/2,\sigma}(\bR^d_T)} \leq N \|\psi u\|_{\cH_p^{\alpha,1}(\bR^d_T)},
$$
where $N=N(d,\alpha,p)$.
We then obtain the desired estimate upon noting that the last term in the above inequality is bounded by the right-hand side of \eqref{eq0224_01}, where the constant depends on $r$ and $R$ as well.
\end{proof}

We now prove Theorem \ref{thm0811_01}, which is about Sobolev embeddings.

\begin{theorem}[Sobolev embedding]
							\label{thm0811_01}
Let $T \in (0,\infty)$, $\alpha \in (0,1)$, and $p, p_0\in (1,\infty)$ such that $p_0 > p$ and
\begin{equation}
							\label{eq0728_01}
1 - \frac{d+2/\alpha}{p} \geq - \frac{d+2/\alpha}{p_0}.
\end{equation}
Then, for $u \in \cH_{p,0}^{\alpha,1}(\bR^d_T)$,
we have
\begin{equation}
							\label{eq0804_02}
\|u\|_{L_{p_0}(\bR^d_T)} \leq N \|u\|_{\cH_p^{\alpha,1}(\bR^d_T)},
\end{equation}
where $N = N(d,\alpha,p,p_0)$.
More precisely,
\begin{equation}
							\label{eq0730_01}
\begin{aligned}
\|u\|_{L_{p_0}(\bR^d_T)} &\leq N \varepsilon^{\frac{d + 2/\alpha}{p_0} - \frac{d + 2/\alpha}{p}} \|u\|_{L_p(\bR^d_T)} + N \varepsilon^{2+ \frac{d+2/\alpha}{p_0} - \frac{d+2/\alpha}{p}} \|f\|_{L_p(\bR^d_T)}
\\
&+ N \varepsilon^{1+ \frac{d+2/\alpha}{p_0} - \frac{d+2/\alpha}{p}} \left(\|Du\|_{L_p(\bR^d_T)} + \|g\|_{L_p(\bR^d_T)} \right)
\end{aligned}
\end{equation}
for any $\varepsilon > 0$, where $N = N(d,\alpha,p,p_0)$
provided that
\begin{equation}
							\label{eq0730_02}
\int_0^T \int_{\bR^d} I_0^{1-\alpha} u \varphi_t \, dx \, d s = \int_0^T \int_{\bR^d} g_i D_i \varphi \, dx \, d s - \int_0^T \int_{\bR^d} f \varphi \, dx \, d s
\end{equation}
for any $\varphi \in C_0^\infty\left([0,T) \times \bR^d\right)$, where $g = (g_1,\ldots,g_d), f \in L_p(\bR^d_T)$.
In particular, if the equality holds in \eqref{eq0728_01}, we have
\begin{equation}
							\label{eq0804_01}
\|u\|_{L_{p_0}(\bR^d_T)} \leq N \|u\|^{1/2}_{L_p(\bR^d_T)}\|f\|_{L_p(\bR^d_T)}^{1/2} + N \left(\|Du\|_{L_p(\bR^d_T)} + \|g\|_{L_p(\bR^d_T)} \right).
\end{equation}
\end{theorem}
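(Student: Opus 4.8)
The plan is to reduce everything to the scaling-sharp bound \eqref{eq0730_01} and deduce the rest from it: choosing a representation of $D_t^\alpha u$ with $\|g\|_{L_p(\bR^d_T)}+\|f\|_{L_p(\bR^d_T)}\le 2\|D_t^\alpha u\|_{\bH_p^{-1}(\bR^d_T)}$ and taking $\varepsilon=1$ gives \eqref{eq0804_02} when the inequality in \eqref{eq0728_01} is strict, while choosing $\varepsilon=(\|u\|_{L_p(\bR^d_T)}/\|f\|_{L_p(\bR^d_T)})^{1/2}$ gives \eqref{eq0804_01} in the borderline case. As in the proof of Theorem \ref{thm0717_1}, by mollification we may assume $u\in C_0^\infty([0,T]\times\bR^d)$ with $u(0,\cdot)=0$ and with \eqref{eq0730_02} in force.

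To prove \eqref{eq0730_01}, fix $\varepsilon>0$, let $u^{(\varepsilon)}$ be the parabolic mollification of $u$ at scale $\varepsilon$ (a smooth average over cylinders $(t-\varepsilon^{2/\alpha},t)\times B_\varepsilon(x)$, of measure $\sim\varepsilon^{d+2/\alpha}$), and split $u=u^{(\varepsilon)}+(u-u^{(\varepsilon)})$. For the first piece, $\|u^{(\varepsilon)}\|_{L_\infty}\le N\varepsilon^{-(d+2/\alpha)/p}\|u\|_{L_p}$ by H\"older, and combined with $\|u^{(\varepsilon)}\|_{L_p}\le\|u\|_{L_p}$ and the log-convexity of $L_q$-norms this yields $\|u^{(\varepsilon)}\|_{L_{p_0}}\le N\varepsilon^{(d+2/\alpha)(1/p_0-1/p)}\|u\|_{L_p}$, the first term of \eqref{eq0730_01}. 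The core of the argument is the $L_p$ oscillation bound
\[
\|u-u^{(\varepsilon)}\|_{L_p(\bR^d_T)}\le N\varepsilon\big(\|Du\|_{L_p(\bR^d_T)}+\|g\|_{L_p(\bR^d_T)}\big)+N\varepsilon^2\|f\|_{L_p(\bR^d_T)},
\]
valid for every $p\in(1,\infty)$. Its spatial part $\|u(t,x)-\dashint_{B_\varepsilon(x)}u(t,y)\,dy\|_{L_p}\le N\varepsilon\|Du\|_{L_p}$ is the usual Poincar\'e estimate (Minkowski's inequality against the kernel $|y|^{1-d}1_{B_\varepsilon}$). For the temporal part I would mollify \eqref{eq0730_02} in the space variable by a smooth kernel $\rho_\varepsilon$ supported in $B_\varepsilon$: then $w:=\rho_\varepsilon\ast u$ (convolution in $x$) satisfies $D_t^\alpha w=(\partial_i\rho_\varepsilon)\ast g_i+\rho_\varepsilon\ast f=:h$ with $\|h\|_{L_p}\le N\varepsilon^{-1}\|g\|_{L_p}+\|f\|_{L_p}$, and since $w(0,\cdot)=0$ we have $w=I_0^\alpha h$, i.e. $w=\kappa\ast h$ in $t$ with $\kappa(r)=r^{\alpha-1}1_{r>0}/\Gamma(\alpha)$. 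The time oscillation of $w$ over a window of length $\delta=\varepsilon^{2/\alpha}$ is $\widetilde\kappa_\delta\ast h$ in $t$, where $\widetilde\kappa_\delta=\kappa-\delta^{-1}\int_0^\delta\kappa(\cdot+\sigma)\,d\sigma$; a direct computation gives $\|\widetilde\kappa_\delta\|_{L_1(\bR)}\le N\delta^\alpha=N\varepsilon^2$, so by Young's inequality the temporal part is $\le N\varepsilon^2\|h\|_{L_p}\le N\varepsilon\|g\|_{L_p}+N\varepsilon^2\|f\|_{L_p}$. (Working with an $L_1$ kernel here is what allows all $p\in(1,\infty)$; an $L_\infty$ route via Lemma \ref{lem0717_01} would require $p>d+2/\alpha$.)

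It remains to upgrade this $L_p$ bound to an $L_{p_0}$ bound with the gain $\varepsilon^{(d+2/\alpha)(1/p_0-1/p)}$. Telescope $u-u^{(\varepsilon)}=\sum_{k\ge0}\big(u^{(2^{-k-1}\varepsilon)}-u^{(2^{-k}\varepsilon)}\big)=\sum_{k\ge0}\Psi_{2^{-k}\varepsilon}\ast u$, where $\Psi$ is a fixed kernel (a difference of two parabolic mollifiers) with $\int\Psi=0$ and parabolic support, so that $\Psi_\rho$ carries a vanishing moment. Inserting a Littlewood--Paley type factor $\Xi_{2^{-k}\varepsilon}$ whose Fourier transform is supported away from the origin and equals $1$ on $\operatorname{supp}\widehat\Psi$, and using Young's inequality with $\|\Psi_\rho\|_{L_r}=N\rho^{(d+2/\alpha)(1/p_0-1/p)}$ for $1/r=1+1/p_0-1/p$, gives
\[
\|\Psi_\rho\ast u\|_{L_{p_0}}\le N\rho^{(d+2/\alpha)(1/p_0-1/p)}\|\Xi_\rho\ast u\|_{L_p},
\]
and $\|\Xi_\rho\ast u\|_{L_p}$ is again controlled, exactly as above (since $\widehat\Xi$ kills constants), by $N\rho(\|Du\|_{L_p}+\|g\|_{L_p})+N\rho^2\|f\|_{L_p}$. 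Summing over $k$ with $\rho=2^{-k}\varepsilon$ gives a geometric series whose $\|f\|_{L_p}$ part always converges and whose $\|Du\|_{L_p},\|g\|_{L_p}$ part converges precisely when the exponent $1+(d+2/\alpha)(1/p_0-1/p)$ is positive, i.e. when \eqref{eq0728_01} is strict; this produces the two remaining terms of \eqref{eq0730_01}.

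The main obstacle is the borderline case, equality in \eqref{eq0728_01} (the critical exponent, where necessarily $p<d+2/\alpha$): there the series for the first-order terms diverges and the crude telescoping must be replaced by a sharper device. One option is to estimate $\|\sum_k\Psi_{2^{-k}\varepsilon}\ast u\|_{L_{p_0}}$ by a Littlewood--Paley square function together with a vector-valued form of the oscillation bound; an equivalent route is to establish the pointwise domination $|u(t,x)|\le N I_1(|Du|+|g|)(t,x)+N I_2(|f|)(t,x)$ by the causal parabolic Riesz potentials of orders $1$ and $2$ in the $(d+2/\alpha)$-dimensional scaling, and then invoke the Hardy--Littlewood--Sobolev inequality on the time-finite slab $(0,T)\times\bR^d$; the latter also returns the sharp $\varepsilon$-free endpoint form \eqref{eq0804_01} directly. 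Throughout, the parabolic mollification keeps all constants independent of $T$, which is what makes \eqref{eq0804_02} a genuine consequence of \eqref{eq0730_01}.
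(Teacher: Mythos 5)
Your initial reductions (mollification, reduction to $u\in C_0^\infty$ with $u(0,\cdot)=0$, the split $u=u^{(\varepsilon)}+(u-u^{(\varepsilon)})$, and the Young/interpolation bound $\|u^{(\varepsilon)}\|_{L_{p_0}}\le N\varepsilon^{(d+2/\alpha)(1/p_0-1/p)}\|u\|_{L_p}$) coincide with the paper's. Your $L_p$ oscillation bound
$\|u-u^{(\varepsilon)}\|_{L_p}\le N\varepsilon(\|Du\|_{L_p}+\|g\|_{L_p})+N\varepsilon^2\|f\|_{L_p}$,
proved by mollifying in $x$, inverting $D_t^\alpha$ by $I_0^\alpha$ to get $w=\kappa\ast_t h$, and checking $\|\kappa-\eta_\delta\ast_t\kappa\|_{L_1(\bR)}\le N\delta^\alpha$, is correct and rather elegant; the paper does not take this detour but instead estimates $u-u^{(\varepsilon)}$ directly in $L_{p_0}$ by writing it as a $\lambda$-integral over a continuum of scales and converting the $u_t$ term into $g,f$ via Lemma \ref{lem0716_1}, so that one single pass of Young's/Minkowski's inequality in the full kernel $H=I_0^\alpha\zeta'$ gives all the $\varepsilon$-powers at once.

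The step where your argument breaks is the upgrade from the $L_p$ oscillation bound to the $L_{p_0}$ bound. You telescope $u-u^{(\varepsilon)}=\sum_{k\ge 0}\Psi_{2^{-k}\varepsilon}\ast u$ with $\Psi$ a difference of two \emph{compactly supported} parabolic mollifiers and then try to insert a factor $\Xi_\rho$ with $\widehat\Xi$ supported away from the origin and equal to $1$ on $\operatorname{supp}\widehat\Psi$. This is impossible: since $\Psi$ has compact support, $\widehat\Psi$ is real-analytic and its support is all of $\bR^{d+1}$, so the only function equal to $1$ on $\operatorname{supp}\widehat\Psi$ is the constant $1$, which is not supported away from the origin. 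Replacing $\Psi$ with a band-limited function (as in genuine Littlewood--Paley theory) destroys the telescoping identity $u^{(2^{-k-1}\varepsilon)}-u^{(2^{-k}\varepsilon)}=\Psi_{2^{-k}\varepsilon}\ast u$, and also invalidates the step where you "apply the oscillation bound exactly as above" to $\Xi_\rho\ast u$: that bound was derived using the positivity and tensor-product structure of the specific mollifier (Poincar\'e in $x$, the $L_1$ kernel $\kappa-\eta_\delta\ast\kappa$ in $t$), neither of which is available for a generic band-limited, non-positive, non-separable $\Xi$. So \eqref{eq0730_01} is not actually established even in the sub-critical case, and the critical case (equality in \eqref{eq0728_01}) is only a gesture toward two alternatives without proof. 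Your second alternative (pointwise domination by parabolic Riesz potentials of orders $1$ and $2$ followed by Hardy--Littlewood--Sobolev) is in spirit what the paper does in its Case~2 for $K_{1,1}$: the paper changes the order of $\lambda$- and $t$-integrations, estimates the $\lambda$-integral of the kernel $H(\,(t-s)/\lambda^{2/\alpha}\,)$ to extract the weight $(t-s)^{-1+1/p-1/p_0}$, and then applies HLS in time --- but this is a genuine computation, not something that can be cited as folklore at the level of generality you need. In short, the paper's proof estimates the oscillation in $L_{p_0}$ directly with elementary real-variable kernel bounds and no Fourier analysis, which is what makes the argument close uniformly across the sub-critical and critical cases; your two-stage "bound in $L_p$ then upgrade" plan is a reasonable instinct but the upgrade, as written, does not work.
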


\begin{proof}
It suffices to prove \eqref{eq0730_01} for $u \in C^\infty\left([0,T] \times \bR^d\right)$ such that $u(0,x) = 0$ and $u(t,x)$ vanishes for large $|x|$.
Throughout the proof, $u(t,x)$ is extended to be zero for $t \leq 0$.
Note that the equality \eqref{eq0730_02} holds with $t \in [0,T]$ in place of $T$ if $\varphi \in C_0^\infty\left([0,t) \times \bR^d\right)$.

Set $\eta(t)$ to be an infinitely differentiable function defined on $\bR$ such that $\eta(t) \geq 0$, $\eta(t) = 0$ for $t \notin (1/2,1)$, and
$$
\int_\bR \eta(t) \, dt = 1.
$$
Also set $\psi(x)$ to be an infinitely differentiable function defined on $\bR^d$ such that $\psi(x) \geq 0$, $\psi(x) = 0$ for $x \in \bR^d \setminus B_1$, and
$$
\int_{\bR^d} \psi(x) \, dx = 1.
$$
Then, for $(t,x) \in \bR^d_T$, we define
\begin{align*}
u^{(\varepsilon)}(t,x) &= \int_0^T\int_{\bR^d} u(s,y) \eta_{\varepsilon^{2/\alpha}}(t-s) \psi_\varepsilon(x-y) \, dy \, ds
\\
&= \int_{-\infty}^t \int_{\bR^d} u(s,y) \eta_{\varepsilon^{2/\alpha}}(t-s) \psi_\varepsilon(x-y) \, dy \, ds,
\end{align*}
where
$$
\eta_{\varepsilon^{2/\alpha}}(t) = \varepsilon^{-2/\alpha} \eta(t/\varepsilon^{2/\alpha}), \quad \psi_\varepsilon(x) = \varepsilon^{-d} \psi(x/\varepsilon).
$$
Set $a \in [1,\infty]$ so that
\begin{equation}
							\label{eq0802_01}
\frac{1}{p} + \frac{1}{a} = \frac{1}{p_0} + 1.
\end{equation}
Indeed, we see that $a \in [1,p_0]$ because $1 \leq p \leq p_0$.
By Young's convolution inequality it follows that
\begin{equation}
							\label{eq0803_04}
\|u^{(\varepsilon)}\|_{L_{p_0}(\bR^d_T)} \leq N \varepsilon^{\frac{d + 2/\alpha}{p_0} - \frac{d + 2/\alpha}{p}} \|u\|_{L_p(\bR^d_T)}.
\end{equation}

For $(t,x) \in \bR^d_T$, we now consider
$$
u(t,x) - u^{(\varepsilon)}(t,x) = \int_{-\infty}^t\int_{\bR^d} \left(u(t,x) - u(s,y)\right) \eta_{\varepsilon^{2/\alpha}}(t-s) \psi_\varepsilon(x-y) \, dy \, ds.
$$
Let
$$
\tau(\lambda) = \left( (1-\lambda^{2/\alpha}) t + \lambda^{2/\alpha} s, (1-\lambda) x + \lambda y \right), \quad \lambda \in [0,1].
$$
We then write
\begin{align*}
&u(t,x)-u(s,y) = u\left(\tau(0)\right) - u\left(\tau(1)\right)\\
&= \int_0^1 \frac{2}{\alpha} \lambda^{\frac{2}{\alpha}-1} (t-s) u_t \left( \tau(\lambda) \right) \, d\lambda + \int_0^1 \nabla u\left(\tau(\lambda)\right) \cdot (x-y) \, d\lambda.
\end{align*}
Hence,
\begin{equation}
							\label{eq0803_03}
\begin{aligned}
&u(t,x) - u^{(\varepsilon)}(t,x)
\\
&= \frac{2}{\alpha} \int_{-\infty}^t \int_{\bR^d} \int_0^1 \lambda^{\frac{2}{\alpha}-1} (t-s) u_t \left( \tau(\lambda) \right) \eta_{\varepsilon^{2/\alpha}}(t-s) \psi_\varepsilon(x-y) \, d\lambda  \, dy \, ds
\\
&\quad \, + \int_{-\infty}^t \int_{\bR^d} \int_0^1 (x-y) \cdot \nabla u\left(\tau(\lambda)\right)  \eta_{\varepsilon^{2/\alpha}}(t-s) \psi_\varepsilon(x-y) \, d\lambda  \, dy \, ds
\\
&=: K_1 + K_2.
\end{aligned}
\end{equation}

To estimate $K_1$, we denote
$$
\zeta(t) := t \eta(t).
$$
By the change of variables $\left( (1-\lambda^{2/\alpha}) t + \lambda^{2/\alpha} s, (1-\lambda) x + \lambda y \right) \to (s,y)$,
\begin{align*}
K_1 &= \frac{2}{\alpha} \varepsilon^{-d} \int_0^1 \lambda^{-d-1} \int_{-\infty}^t \int_{\bR^d} u_t(s,y) \, \zeta\left( \frac{t-s}{\varepsilon^{2/\alpha}\lambda^{2/\alpha}}\right) \psi \left( \frac{x-y}{\varepsilon \lambda}\right) \, dy \, ds \, d\lambda\\
&= \frac{2}{\alpha} \int_0^\varepsilon \lambda^{-d-1} \int_{-\infty}^t \int_{\bR^d} u_t(s,y) \, \zeta\left( \frac{t-s}{\lambda^{2/\alpha}}\right) \psi \left( \frac{x-y}{\lambda}\right) \, dy \, ds \, d\lambda,
\end{align*}
where the second equality is due to the change of variable $\varepsilon \lambda \to \lambda$.
For each $t \in [0,T]$, by Lemma \ref{lem0716_1} with $t$ in place of $T$ and the fact that, as a function of $s$, $\zeta\left(\frac{t-s}{\lambda^{2/\alpha}}\right) = 0$ at $s=t$ and $u(s,y) = 0$ for $s \leq 0$,
\begin{equation}
							\label{eq0719_01}
\begin{aligned}
\int_{-\infty}^t u_t(s,y) \,  \zeta\left( \frac{t-s}{\lambda^{2/\alpha}}\right)\, ds &= \int_0^t u_t(s,y) \, \zeta\left(\frac{t-s}{\lambda^{2/\alpha}}\right) \, ds
\\
&= \int_0^t I_0^{1-\alpha}u(s,y) \, \partial_s \left( J_t^\alpha \tilde{\zeta}'(s) \right) \, ds,
\end{aligned}
\end{equation}
where
$$
\tilde{\zeta}(s) = \zeta\left(\frac{t-s}{\lambda^{2/\alpha}}\right), \quad J_t^\alpha \tilde{\zeta}'(s) \in C^1\left([0,t]\right).
$$
By setting
$$
H(t) := I_0^\alpha \zeta' (t) = \frac{1}{\Gamma(\alpha)} \int_0^t (t-r)^{\alpha-1} \zeta'(r) \, dr,
$$
we see that
\begin{equation}
							\label{eq0726_02}
\begin{aligned}
J_t^\alpha \tilde{\zeta}'(s) &= - \lambda^{-2/\alpha} \frac{1}{\Gamma(\alpha)} \int_s^t (r-s)^{\alpha-1} \zeta'\left(\frac{t-r}{\lambda^{2/\alpha}}\right) \, dr
\\
&= - \lambda^{-2/\alpha} \frac{1}{\Gamma(\alpha)} \int_0^{t-s} (t-s-r)^{\alpha-1} \zeta'\left(\frac{r}{\lambda^{2/\alpha}}\right) \, dr
\\
& = - \lambda^{2-2/\alpha} H\left(\frac{t-s}{\lambda^{2/\alpha}}\right),
\end{aligned}
\end{equation}
where $H$ satisfies
\begin{equation}
							\label{eq0803_06}
H(t) \leq N(\alpha) \quad \text{for} \,\, t \in (0,2), \quad
H(t) \leq N(\alpha) t^{\alpha-2} \quad \text{for} \,\, t \geq 2.
\end{equation}
Indeed, the first inequality in \eqref{eq0803_06} follows from Lemma A.2 in \cite{MR3899965} so that
$$
\|H\|_{L_\infty(0,2)} = \|I_0^\alpha \zeta'\|_{L_\infty(0,2)} \leq N \|\zeta'\|_{L_\mu(0,2)},
$$
where $\mu \in (1,\infty)$ with $\alpha > 1/\mu$.
The second inequality in \eqref{eq0803_06} follows from
$$
\Gamma(\alpha) H(t) = \int_0^1 (t-r)^{\alpha-1} \zeta'(r) \, dr = (\alpha-1) \int_0^1 (t-r)^{\alpha-2} \zeta (r) \, dr \leq N t^{\alpha-2}
$$
provided that $t \geq 2$, where we used integration by parts along with the fact that $\zeta(r) = 0$ for $r \geq 1$ or $r \leq 0$.
Considering $(\alpha-2)a+1 < 0$, the inequalities in \eqref{eq0803_06}
imply that
\begin{equation}
							\label{eq0803_07}
\|H\|_{L_a(0,\infty)} \leq N(\alpha,p,p_0).
\end{equation}

From \eqref{eq0719_01} and \eqref{eq0730_02} with $t$ in place of $T$, it follows that
\begin{align*}
&\int_{-\infty}^t \int_{\bR^d} u_t(s,y) \, \zeta\left( \frac{t-s}{\lambda^{2/\alpha}}\right) \psi \left( \frac{x-y}{\lambda}\right) \, dy \, ds
\\
&= \int_0^t \int_{\bR^d} I_0^{1-\alpha}u(s,y) \, \partial_s \left[ J_t^\alpha \tilde{\zeta}'(s)\psi \left( \frac{x-y}{\lambda}\right)\right] \, dy \, ds
\\
&= - \lambda^{-1} \int_0^t \int_{\bR^d} g_i(s,y) J_t^\alpha \tilde{\zeta}'(s) \, (D_i\psi) \left( \frac{x-y}{\lambda}\right) \, dy \, ds
\\
& \quad \, - \int_0^t \int_{\bR^d} f(s,y) J_t^\alpha \tilde{\zeta}'(s) \, \psi \left( \frac{x-y}{\lambda}\right) \, dy \, ds.
\end{align*}
Thus,
\begin{align*}
K_1 & = \frac{2}{\alpha} \int_0^\varepsilon \lambda^{-d-\frac{2}{\alpha}} \int_0^t \int_{\bR^d} g_i(s,y) H\left(\frac{t-s}{\lambda^{2/\alpha}}\right) (D_i \psi)\left(\frac{x-y}{\lambda}\right) \, dy \, ds \, d\lambda
\\
& \quad \, + \frac{2}{\alpha} \int_0^\varepsilon \lambda^{-d+1-\frac{2}{\alpha}} \int_0^t \int_{\bR^d} f(s,y) H\left(\frac{t-s}{\lambda^{2/\alpha}}\right) \psi \left( \frac{x-y}{\lambda}\right) \, dy \, ds \, d\lambda
\\
& := K_{1,1}(t,x) + K_{1,2}(t,x).
\end{align*}
To estimate $K_{1,1}$, upon recalling \eqref{eq0802_01} with Young's convolution inequality with respect to $x \in \bR^d$ and Minkowski's inequality, we have
\begin{equation}
							\label{eq0803_01}
\|K_{1,1}(t,\cdot)\|_{L_{p_0}(\bR^d)}
\le N \int_0^\varepsilon \lambda^{-d-\frac{2}{\alpha}+\frac{d}{a}} \int_0^t \left|H\left(\frac{t-s}{\lambda^{\frac{2}{\alpha}}}\right)\right| \|g(s,\cdot)\|_{L_p(\bR^d)} \, ds \, d\lambda,
\end{equation}
where we used \eqref{eq0726_02} and $N = N(d,\alpha,p,p_0)$.
We now consider two cases.

{\bf Case 1 for $K_{1,1}$}: $1-(d+2/\alpha)/p > - (d+2/\alpha)/p_0$.
In this case by taking the $L_{p_0}$-norms of both sides of the inequality in \eqref{eq0803_01} along with Young's convolution inequality with respect to $t \in (0,T)$, we have
\begin{align*}
&\|K_{1,1}\|_{L_{p_0}(\bR^d_T)} = \left\| \|K_{1,1}(t,\cdot)\|_{L_{p_0}(\bR^d)} \right\|_{L_{p_0}(0,T)}\\
&\leq N \|g\|_{L_{p}(\bR^d_T)} \int_0^\varepsilon \lambda^{-d-2/\alpha+d/a} \left\|H\left(\frac{\cdot}{\lambda^{2/\alpha}}\right)\right\|_{L_a(0,\infty)} \, d\lambda,
\end{align*}
where by \eqref{eq0803_07}
$$
\left\|H\left(\frac{\cdot}{\lambda^{2/\alpha}}\right)\right\|_{L_a(0,\infty)}
\leq N(\alpha,p,p_0)\lambda^{2/(\alpha a)}.
$$
Thus, because
$$
-d-2/\alpha+d/a+2/(\alpha a) = d/p_0 - d/p + (1/p_0-1/p)2/\alpha > - 1,
$$
we arrive at
\begin{equation}
							\label{eq0804_03}
\begin{aligned}
\|K_{1,1}\|_{L_{p_0}(\bR^d_T)} &\leq N \|g\|_{L_{p}(\bR^d_T)} \int_0^\varepsilon \lambda^{-d-\frac{2}{\alpha}+\frac{d}{a}+\frac{2}{\alpha a}} \, d\lambda
\\
&= N \varepsilon^{1 + \frac{d}{p_0} - \frac{d}{p} + \frac{2}{\alpha}\left(\frac{1}{p_0}-\frac{1}{p}\right)}\|g\|_{L_{p}(\bR^d_T)},
\end{aligned}
\end{equation}
where $N = N(d,\alpha,p,p_0)$.

{\bf Case 2 for $K_{1,1}$}: $1-(d+2/\alpha)/p = - (d+2/\alpha)/p_0$.
In this case, we change the order of the integrations in \eqref{eq0803_01} and use the following estimate from \eqref{eq0803_06}
$$
\left| H\left(\frac{t-s}{\lambda^{\frac{2}{\alpha}}}\right)\right| \leq
\left\{
\begin{aligned}
N \left(\frac{t-s}{\lambda^{\frac{2}{\alpha}}}\right)^{\alpha-2} \quad &\text{for} \,\, 0 < \lambda < \left(\frac{t-s}{2}\right)^{\frac{\alpha}{2}},
\\
N \quad \quad \quad \quad &\text{for} \,\, \lambda \geq \left(\frac{t-s}{2}\right)^{\frac{\alpha}{2}}.
\end{aligned}
\right.
$$
Thus, the integral with respect to $\lambda$ in \eqref{eq0803_01} is estimated as
\begin{align*}
&\int_0^\varepsilon \lambda^{-d-\frac{2}{\alpha}+\frac{d}{a}} \left| H\left(\frac{t-s}{\lambda^{\frac{2}{\alpha}}}\right)\right| \, d\lambda \leq N \int_0^{\left(\frac{t-s}{2}\right)^{\frac{\alpha}{2}}} \lambda^{-d-\frac{2}{\alpha}+\frac{d}{a}} \left(\frac{t-s}{\lambda^{\frac{2}{\alpha}}}\right)^{\alpha-2} \, d\lambda\\
&\qquad + N \int_{\left(\frac{t-s}{2}\right)^{\frac{\alpha}{2}}}^\infty \lambda^{-d-\frac{2}{\alpha}+\frac{d}{a}} \, d\lambda = N (t-s)^{-1+ \frac{1}{p}- \frac{1}{p_0}},
\end{align*}
where the calculation relies on the equality $1-(d+2/\alpha)/p = - (d+2/\alpha)/p_0$, $\alpha \in (0,1)$, and $1/p > 1/p_0$, so that
\begin{align*}
&-d-2/\alpha+d/a-(\alpha-2)2/\alpha = 2/\alpha + d/p_0 - d/p - 2\\
&= 2/\alpha + \frac{2}{\alpha}(1/p - 1/p_0) - 3 > -1
\end{align*}
and
$$
-d-2/\alpha+d/a = -2/\alpha+d/p_0 - d/p < -1.
$$
Hence,
$$
\|K_{1,1}(t,\cdot)\|_{L_{p_0}(\bR^d)} \leq N \int_0^t (t-s)^{-1+1/p - 1/p_0} \|g(s,\cdot)\|_{L_p(\bR^d)} \, ds.
$$
Since $1/p - 1/p_0 \in (0,1)$, the Hardy-Littlewood-Sobolev theorem of fractional integration (see, for instance, \cite[p.119, Theorem 1]{MR0290095}) applied to the above inequality gives that
\begin{equation}
							\label{eq0804_04}
\begin{aligned}
&\|K_{1,1}\|_{L_{p_0}(\bR^d_T)}
= \left\| \|K_{1,1}(t,\cdot)\|_{L_{p_0}(\bR^d)}\right\|_{L_{p_0}(0,T)}
\\
&\leq N \left\| \int_{-\infty}^\infty |t-s|^{-1+\frac{1}{p}-\frac{1}{p_0}} \|g(s,\cdot)\|_{L_p(\bR^d)} 1_{0<s<T} \, ds \right\|_{L_{p_0}(\bR)} \leq N \|g\|_{L_p(\bR^d_T)},
\end{aligned}
\end{equation}
where $N = N(d, \alpha, p, p_0)$.

To estimate $K_{1,2}$, for both cases
$$
1-(d+2/\alpha)/p > - (d+2/\alpha)/p_0 \quad \text{and} \quad 1-(d+2/\alpha)/p = - (d+2/\alpha)/p_0,
$$ similarly as in Case 1 for $K_{1,1}$, we have
\begin{equation}
							\label{eq0804_05}
\begin{aligned}
&\|K_{1,2}\|_{L_{p_0}(\bR^d_T)}
= \left\|\|K_{1,2}(t,\cdot)\|_{L_{p_0}(\bR^d)} \right\|_{L_{p_0}(0,T)}
\\
&\leq N \int_0^\varepsilon \lambda^{-d+1-\frac{2}{\alpha}+\frac{d}{a}} \left\|H\left(\frac{\cdot}{\lambda^{\frac{2}{\alpha}}}\right)\right\|_{L_a(0,\infty)} \|f\|_{L_p(\bR^d_T)} \, d\lambda
\\
&\leq N \|f\|_{L_p(\bR^d_T)} \int_0^\varepsilon \lambda^{1+ \frac{d}{p_0} - \frac{d}{p}+\frac{2}{\alpha}\left(\frac{1}{p_0}-\frac{1}{p}\right)}\, d\lambda
\\
&= N \varepsilon^{2+\frac{d}{p_0} - \frac{d}{p}+\frac{2}{\alpha}\left(\frac{1}{p_0}-\frac{1}{p}\right)} \|f\|_{L_p(\bR^d_T)},
\end{aligned}
\end{equation}
where $1+d/p_0 - d/p+(1/p_0-1/p)2/\alpha \geq 0$ and  $N = N(d,\alpha,p,p_0)$.

The estimate of $K_2$ is almost the same as $K_{1,1}$ with $\nabla u$ and $\eta$ in place of $g$ and $|H|$, respectively.
If $1-(d+2/\alpha)/p > -(d+2/\alpha)/p_0$, by following the calculation in Case 1 for $K_{1,1}$,
we have
\begin{equation}
							\label{eq0806_01}
\|K_2\|_{L_{p_0}(\bR^d_T)} \leq N \varepsilon^{1 + d/p_0 - d/p + (1/p_0-1/p)2/\alpha}\|Du\|_{L_{p}(\bR^d_T)},
\end{equation}
where $N = N(d,\alpha,p,p_0)$.
If $1-(d+2/\alpha)/p = -(d+2/\alpha)/p_0$, we follow the calculation in Case 2 for $K_{1,1}$ to get
\begin{equation}
							\label{eq0806_02}
\|K_2\|_{L_{p_0}(\bR^d_T)} \leq N \|Du\|_{L_p(\bR^d_T)},
\end{equation}
where $N = N(d,\alpha,p,p_0)$.

From the estimates \eqref{eq0806_01}, \eqref{eq0806_02} for $K_2$, and the estimates \eqref{eq0804_03}, \eqref{eq0804_04}, \eqref{eq0804_05} for $K_{1,1}$ and $K_{1,2}$ along with \eqref{eq0803_04} and \eqref{eq0803_03}, we obtain the inequality \eqref{eq0730_01}, which implies \eqref{eq0804_02} by the choice of, for instance, $\varepsilon = 1$.
In particular, if $1 + (d+2/\alpha)/p_0 = (d+2/\alpha)/p$, the inequality \eqref{eq0730_01} becomes
$$
\|u\|_{L_{p_0}(\bR^d_T)} \leq N \varepsilon^{-1}\|u\|_{L_p(\bR^d_T)} + N \varepsilon \|f\|_{L_p(\bR^d_T)} + N \left(\|Du\|_{L_p(\bR^d_T)} + \|g\|_{L_p(\bR^d_T)} \right).
$$
Minimizing the right-hand side with respect to $\varepsilon > 0$, we arrive at \eqref{eq0804_01}.
The theorem is proved.
\end{proof}

\begin{corollary}
							\label{cor0811_1}
Let $\alpha \in (0,1)$, $T \in (0,\infty)$, $0 < r < R < \infty$, and $p,p_0 \in (1,\infty)$ satisfy $p_0 > p$ and \eqref{eq0728_01}.
For $u \in \cH_{p,0}^{\alpha,1}\left((0,T) \times B_R\right)$, we have
\begin{equation*}
\|u\|_{L_{p_0}\left((0,T) \times B_r \right)} \leq N \|u\|_{\cH_p^{\alpha,1}\left((0,T) \times B_R\right)},
\end{equation*}
where $N = N(d,\alpha,p,p_0,r,R)$.
\end{corollary}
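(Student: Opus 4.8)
The plan is to reduce the statement to the global Sobolev embedding Theorem \ref{thm0811_01} by a spatial cutoff, exactly in the spirit of the proof of Corollary \ref{cor1029_1}. Fix an infinitely differentiable $\psi$ on $\bR^d$ with $\psi = 1$ on $B_r$ and $\operatorname{supp}\psi \subset B_R$. Given $u \in \cH_{p,0}^{\alpha,1}\left((0,T) \times B_R\right)$ and $\varepsilon > 0$, pick $g_i, f \in L_p\left((0,T) \times B_R\right)$ with $\partial_t^\alpha u = D_i g_i + f$ in $(0,T) \times B_R$ and $\sum_i \|g_i\|_{L_p} + \|f\|_{L_p} \le \|\partial_t^\alpha u\|_{\bH_p^{-1}\left((0,T) \times B_R\right)} + \varepsilon$, all norms over $(0,T) \times B_R$.

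The first step is to check that $\psi u \in \cH_{p,0}^{\alpha,1}(\bR^d_T)$ and that
$$
\partial_t^\alpha(\psi u) = D_i(\psi g_i) + \psi f - g_i D_i\psi \qquad \text{in } \bR^d_T.
$$
Since $\psi$ depends only on $x$, the operator $\partial_t^\alpha = \partial_t I_0^{1-\alpha}$ commutes with multiplication by $\psi$, so for $\varphi \in C_0^\infty\left([0,T) \times \bR^d\right)$ one tests the defining identity \eqref{eq0720_01} for $u$ on $(0,T) \times B_R$ against $\psi \varphi \in C_0^\infty\left([0,T) \times B_R\right)$ and uses $D_i(\psi\varphi) = \psi D_i\varphi + \varphi D_i\psi$; this gives the displayed equation in the weak sense. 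The membership $\psi u \in \cH_{p,0}^{\alpha,1}(\bR^d_T)$ then follows by multiplying by $\psi$ an approximating sequence $\{u_n\} \subset C^\infty\left([0,T] \times B_R\right)$ for $u$ with $u_n(0,\cdot) = 0$, noting that $\psi u_n$ is supported in $\overline{B_R}$, vanishes at $t = 0$, and converges to $\psi u$ in $\cH_p^{\alpha,1}(\bR^d_T)$ (for the $\bH_p^{-1}$ part one applies the same product-rule identity to a representation of $\partial_t^\alpha(u_n - u)$, as in Lemma 3.4 of \cite{MR4030286} and in the proof of Corollary \ref{cor1029_1}). This is the only step requiring any care, and it is routine.

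The remaining step is bookkeeping. Applying \eqref{eq0804_02} of Theorem \ref{thm0811_01} to $\psi u$ gives $\|\psi u\|_{L_{p_0}(\bR^d_T)} \le N \|\psi u\|_{\cH_p^{\alpha,1}(\bR^d_T)}$ with $N = N(d,\alpha,p,p_0)$. On the left, $\|u\|_{L_{p_0}\left((0,T) \times B_r\right)} \le \|\psi u\|_{L_{p_0}(\bR^d_T)}$ because $\psi \equiv 1$ on $B_r$. On the right, $\|\psi u\|_{L_p(\bR^d_T)} \le \|u\|_{L_p\left((0,T) \times B_R\right)}$, $\|D(\psi u)\|_{L_p(\bR^d_T)} \le N(\|Du\|_{L_p} + \|u\|_{L_p})$ over $(0,T) \times B_R$ with $N$ depending on $\sup|D\psi|$, and, using the chosen representation and the definition of the $\bH_p^{-1}$-norm,
$$
\|\partial_t^\alpha(\psi u)\|_{\bH_p^{-1}(\bR^d_T)} \le \sum_i \|\psi g_i\|_{L_p} + \|\psi f\|_{L_p} + \sum_i \|g_i D_i\psi\|_{L_p} \le N\big(\|\partial_t^\alpha u\|_{\bH_p^{-1}\left((0,T) \times B_R\right)} + \varepsilon\big),
$$
with $N = N(d,r,R)$ and the $L_p$-norms over $(0,T) \times B_R$. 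Collecting these and letting $\varepsilon \to 0$ yields $\|u\|_{L_{p_0}\left((0,T) \times B_r\right)} \le N \|u\|_{\cH_p^{\alpha,1}\left((0,T) \times B_R\right)}$ with $N = N(d,\alpha,p,p_0,r,R)$. No genuine obstacle is expected: all the analytic content sits in Theorem \ref{thm0811_01}, and the corollary is a soft cutoff argument.
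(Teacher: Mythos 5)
Your proof is correct and follows exactly the approach the paper indicates: the paper's proof of Corollary \ref{cor0811_1} simply refers to the proof of Corollary \ref{cor1029_1}, which is the same spatial cutoff by $\psi$, the product-rule identity $\partial_t^\alpha(\psi u) = D_i(\psi g_i) + \psi f - g_i D_i\psi$, and then an application of the global embedding (here Theorem \ref{thm0811_01} in place of Theorem \ref{thm0717_1}). Your additional detail on verifying $\psi u \in \cH_{p,0}^{\alpha,1}(\bR^d_T)$ via the approximating sequence merely fleshes out what the paper leaves implicit.
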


\begin{proof}
The proof is the same as that of Corollary \ref{cor1029_1} with $C^{\sigma\alpha/2,\sigma}$ replaced by $L_{p_0}$.
\end{proof}

\bibliographystyle{plain}

\def\cprime{$'$}

\end{document}